\def\gridsize{60}
\def\topEnd{26}
\def\bottomEnd{-36}
\newtheorem{theorem}{Theorem}
\newtheorem{proposition}[theorem]{Proposition}
\newtheorem{corollary}[theorem]{Corollary}
\newtheorem{lemma}[theorem]{Lemma}
\theoremstyle{definition}
\newtheorem{definition}[theorem]{Definition}
\theoremstyle{remark}
\numberwithin{equation}{section}
\tikzset{
  mountain/.style = {thick},
  valley/.style = {thick, densely dashed},
  mountain_opt/.style = {thick, color=blue!70!black},
  valley_opt/.style = {thick, densely dashed, color=blue!70!black},
  opt/.style = {thick, color=green!70!black}
}
\title{Flat origami is Turing complete}
\author[Hull]{Thomas C. Hull}
\address{Franklin \& Marshall College}
\curraddr{}
\email{thomas.hull@fandm.edu}
\thanks{}
\author[Zakharevich]{Inna Zakharevich}
\address{Cornell University}
\curraddr{}
\email{zakh@math.cornell.edu}
\thanks{}
\begin{document}

\maketitle

\begin{abstract}
\textit{Flat origami} refers to the folding of flat, zero-curvature paper such that the finished object lies in a plane. Mathematically, flat origami consists of a continuous, piecewise isometric map $f:P\subseteq\mathbb{R}^2\to\mathbb{R}^2$ along with a layer ordering $\lambda_f:P\times P\to \{-1,1\}$ that tracks which points of $P$ are above/below others when folded. The set of crease lines that a flat origami makes (i.e., the set on which the mapping $f$ is non-differentiable) is called its \textit{crease pattern}. Flat origami mappings and their layer orderings can possess surprisingly intricate structure. For instance, determining whether or not a given straight-line planar graph drawn on $P$ is the crease pattern for some flat origami has been shown to be an NP-complete problem, and this result from 1996 led to numerous explorations in computational aspects of flat origami. In this paper we prove that flat origami, when viewed as a computational device, is Turing complete, or more specifically P-complete. We do this by showing that flat origami crease patterns with \textit{optional creases} (creases that might be folded or remain unfolded depending on constraints imposed by other creases or inputs) can be constructed to simulate Rule 110, a one-dimensional cellular automaton that was proven to be Turing complete by Matthew Cook in 2004. 
\end{abstract}

\tableofcontents

\section{Introduction}

Origami, the art of paper folding, has lately been a source of inspiration for
applications in mechanical engineering \cite{Fulton21,YOrigami}, materials
science \cite{Silverberg1,Liu}, and architecture \cite{Maleczek2018}.  Helping
this interest has been the rise of \textit{computational origami}, which studies
computational questions that emerge from the folding of paper, as a field in
computational and combinatorial geometry \cite{GFALOP}. Of particular interest
has been \textit{flat origami}, where a two-dimensional sheet of paper, or all
of $\mathbb{R}^2$, is folded into a flat object, back into the plane without
stretching, tearing, or self-intersecting the paper. For example, in 1996 Bern
and Hayes proved that the decidability question of whether a given crease
pattern can fold flat is NP-hard \cite{Bern}. However, because of the difficulty
in rigorously modeling flat origami, a hole in their proof remained undetected
for 20 years until Akitaya et al. repaired and strengthened their proof in 2016
\cite{boxpleat}.

The fact that flat origami is so computationally difficult may seem strange and
contradictory, but this is because it involves a complicated interaction between
local and global phenomena.  For example, Kawasaki's Theorem (see Theorem~\ref{thm:Kawasaki} below) states
that a set of creases which all meet at a vertex can fold flat if and only
if the alternating sum of its sector angles is zero.  However, this condition is not
sufficient for a multiple-vertex crease pattern or even crease patterns with non-intersecting lines to fold flat; for this, it is necessary
to consider global properties.  For instance, if we make two parallel creases on
a sheet of paper, they can fold flat in the same direction only if the distance
between the creases is large enough to fit the two flaps.  Thus even though each
individual crease can flat-fold, the two creases can interfere with one another.
With more complicated crease patterns these interactions become more and more
complicated, with a complete analysis requiring an understanding of the order of
different layers of the paper (see Section~\ref{sec:prelim}).  Thus the global structure of the crease pattern
massively increases the complexity of flat origami.

In a similar vein, we can consider finite cellular automata: ``machines'' with
cells, each of which can have only finitely many states, and where cells shift states
based on the states of their neighbors.  If a finite cellular automaton only has
finitely many cells then it's a finite state machine, with very little
computational power.  But cellular automata with infinitely many cells, even
very restricted ones where each cell only sees two neighbors, are extremely
computationally powerful.  Given that flat-folding origami is much more
complex than the cellular case, even for only finitely many folds, the question
arises: How complex is flat-folding origami when it is placed in an infinite
sheet, with infinitely many creases allowed?

A problem containing an infinite amount of information can be arbitrarily
complex.  We therefore decide to limit the problem. We start with an \textit{origami
tessellation} (i.e. a periodic crease pattern that is possible to fold flat) on an infinite sheet of paper.
Then we allow ourselves to modify only finitely many of the creases in the
tessellation and seek to determine the complexity of the problem: Can this new crease configuration
fold flat?  The idea is that our tessellation would model the behavior of a
cell in a one-dimensional cellular automaton, with the cells going across the
paper, left-to-right, and the ``result of the computation'' going down the paper.  If the
computation terminates, the ``modification'' would be constrained to a finite
portion of the paper; if it did not then the ``necessary modification'' for the
pattern to flat-fold would go off to infinity.

% More formally, suppose that $f: \mathbb{N}^k \rightarrow \mathbb{N}$ is a computable
% function, i.e. there is a Turing machine with the properties that (REFERENCE!!)
% \begin{itemize}
% \item (Currently off of WIkipedia) If $f(\mathbf{x})$ is defined then the
%   machine will terminate on the input $\mathbf{x}$ with the output
%   $f(\mathbf{x})$.
% \item If $f(\mathbf{x})$ is not defined the program never terminates on the
%   input $\mathbf{x}$.
% \end{itemize}
% Notice that for any computable function $f$ there is a Turing machine that
% terminates with \emph{empty} output on the input $\mathbf{x}$ whenever
% $f(\mathbf{x})$ is defined, and never terminates if $f(\mathbf{x})$ if it is
% not.  There exists a linear cellular automaton (DEFINE THIS EARLIER, REFRENCE
% COOK) that can model any Turing machine, and in particular can model this
% modification.  On any input, it will either eventually become periodic (with the
% ``empty input'') or it will not; thus the question of whether the Turing machine
% terminates is equivalent to the question of whether the ``deviation from the
% periodicity'' is contained in a finite region or not.  This is exactly the form
% of the flat-folding origami question that we have stated.

In this paper we consider a very simple example of a cellular automaton called
Rule 110 (see Section~\ref{sec:Rule110} for a more detailed definition).  This
cellular automaton consists of an infinite line of cells, each of which can be
``on'' or ``off.''  At each click of the computation, a cell changes its state
based on its current state and the states of its two neighbors.  Thus the
``computational history'' of the entire process can be visualized as a
two-dimensional grid of squares, with each row being a state of the computation
and each column recording the sequence of states that a cell went through.  In
\cite{Cook}, Cook showed that Rule 110 is Turing-complete, and not long after Neary 
and Woods \cite[Theorem 1]{Woods} showed that Rule 110 is
logspace-complete for \textbf{P}.  In this paper we prove the following theorem:
\begin{theorem}
  It is possible to construct an origami tessellating cell which can model
  Rule 110: i.e. the output of the cell is determined from the three
  inputs via Rule 110.
\end{theorem}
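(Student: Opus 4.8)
The plan is to realize the Rule~110 cell as a Boolean circuit built out of origami, using \emph{optional creases} as wires: a bit is recorded by whether a designated optional crease is folded ($1$) or left flat ($0$). Since Rule~110 can be written as the Boolean function $\mathrm{Rule110}(l,c,r) = (c \vee r) \wedge \neg(l\wedge c\wedge r)$, it suffices to assemble a small fixed collection of reusable origami ``gadgets'' — an \textsc{and} gadget, an \textsc{or} gadget, a \textsc{not} gadget, a \textsc{copy}/fan-out gadget (needed because each cell's state feeds three cells in the next row), and a wire-\textsc{crossover} gadget (needed because the three incoming wires of a cell must cross the three outgoing wires of its neighbors in any planar layout) — and to wire them together according to a fixed circuit computing $\mathrm{Rule110}$. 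First I would pin down the bit-encoding convention and build a ``straight wire'' gadget: a strip of the tessellation made of a repeating column of pleats whose flat-foldability — via Kawasaki's Theorem (Theorem~\ref{thm:Kawasaki}) at each vertex together with the layer-ordering constraints — forces the optional-crease state at one end of the strip to equal the state at the other end, so that a wire faithfully transmits a bit across the paper.

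Next I would construct each logic gadget and prove its \emph{correctness lemma}: the gadget admits a flat-folding (a fold/no-fold choice on its optional creases, a mountain--valley assignment on the folded ones, and a valid layer ordering) if and only if the states of its boundary input/output creases satisfy the prescribed Boolean relation. Here the local condition (Kawasaki's Theorem) constrains which creases may fold at each vertex, while the global conditions on layer orderings — the taco--taco, taco--tortilla, and transitivity constraints governing non-crossing and crossing folded layers — are what force the desired logical dependence between the boundary bits. The \textsc{not} gadget is essentially a mountain--valley reversal embedded in a wire; \textsc{and} and \textsc{or} are dual constructions in which a central pleat is permitted to fold only when, respectively, both or at least one of two feeder pleats is folded; \textsc{copy} peels two parallel pleats off a single incoming pleat; and the \textsc{crossover} is the delicate planar gadget letting two logically independent bits pass through the same patch of paper without the layer ordering of one interfering with that of the other.

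The crucial step — and the one I expect to be the main obstacle — is a \textbf{modularity} (or ``locality'') \textbf{lemma}: that these gadgets may be glued along shared wires so that a flat-folding of the assembled crease pattern exists if and only if every gadget is individually satisfied with consistent bit values on the shared wires. Unlike in ordinary circuit design, an origami crease pattern is a single connected sheet, so a priori layers belonging to widely separated gadgets could be forced to interleave and manufacture spurious constraints; one must therefore design the gadgets — with ``buffer'' margins of unfolded paper, carefully chosen pleat widths, and mountain--valley assignments arranged so that the layer-ordering partial order decomposes into independent blocks linked only through the bit-carrying creases — so that the global layer-ordering problem splits exactly along the circuit structure. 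Granting the modularity lemma, the circuit for $\mathrm{Rule110}$ assembled from the gadgets above flat-folds precisely when its output crease is the Rule~110 image of its three input creases.

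Finally I would verify the \emph{tessellation} conditions. The gadget-circuit for a single cell is arranged to have a rectangular footprint whose left and right edges and whose top and bottom edges carry matching crease data — the output creases along the top edge of one row aligning, with the appropriate horizontal shifts, with the left/center/right input creases along the bottom edge of the next — so that infinitely many copies tile $\mathbb{R}^2$ into a legitimate origami tessellation. One checks that the all-zeros configuration, in which every optional crease is left flat and only the mandatory tessellation creases are folded, is itself flat-foldable, serving as the ``blank tape''; and that turning on finitely many input cells corresponds to folding only finitely many optional creases, whose forced consequences then propagate down the sheet exactly as the Rule~110 computational history. This produces an origami tessellating cell that models Rule~110, proving the theorem.
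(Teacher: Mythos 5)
Your outline follows the same overall strategy as the paper --- realize the Rule~110 update as a Boolean circuit built from origami gadgets (gates, fan-out, crossover), then tile the resulting cell --- but as written it is a plan rather than a proof: essentially all of the mathematical content of the theorem lives in the explicit gadget constructions and their correctness arguments, and none of these are supplied. Two of your design choices would, moreover, cause concrete trouble. First, the bit encoding ``crease folded $=1$, crease left flat $=0$'' is not workable as stated: a crease cannot simply terminate in the interior of the sheet (interior vertices of a crease pattern must have even degree and satisfy Kawasaki's Theorem), so a gadget vertex where such a wire lands would have to be flat-foldable both with and without that crease present, changing its degree and hence its Maekawa/Kawasaki constraints; and a pleat that is sometimes folded and sometimes not changes the global isometric folding map, which breaks the periodicity needed for the tessellation. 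The paper avoids this by \emph{always} folding each wire (a mandatory mountain flanked by two optional valleys) and encoding the bit in the \emph{direction} of the pleat, i.e.\ in which of the two optional valleys is active; correctness of the gadgets then rests on showing (via Kawasaki, Maekawa, Justin's Theorem, and taco-taco/taco-tortilla arguments) that exactly one valley per wire is active and that the inputs force the output direction. Any repair of your argument needs an encoding of this kind.

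Second, the ``modularity lemma'' you correctly identify as the crux is asserted, not proved, and there is no indication of how the layer-ordering partial order would be shown to decompose along gadget boundaries --- this is precisely the local-versus-global interaction that made the original Bern--Hayes argument subtle and left a gap in it for twenty years. The paper does not prove such a general lemma either; instead it proves for each gadget that no \emph{incorrect} folding is possible (local forcing of the output from the inputs), verifies existence of a correct global folding directly, and handles the global noise problem with dedicated eater and twist gadgets that absorb or cancel the extraneous pleats every twist and gate emits --- an issue your sketch does not mention at all, and without which the assembled cell would not tile. Until the gadgets are exhibited, their forcing behavior proved, and the noise-disposal and tiling compatibility checked, the proposal does not yet establish the theorem.
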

In particular, we can take the entire history of a computation using Rule 110
and model it using a tessellating cell in origami. See Section~\ref{sec:tess} for a more in-depth discussion. As a corollary, we get the
following statement which is more specific than the claim made by this paper's title of origami being Turing complete:
\begin{corollary}
  Tessellating flat-foldable origami is logspace-complete for \textbf{P}.
\end{corollary}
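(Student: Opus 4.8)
The plan is to deduce the corollary from the theorem above together with the theorem of Neary and Woods \cite[Theorem 1]{Woods} that the prediction problem for Rule 110 is logspace-complete for \textbf{P}. Recall that the Turing-completeness of Rule 110, and the Neary--Woods hardness result, are realized by running the automaton on a fixed, spatially periodic background --- the ``ether'' --- into which the input is inserted as a finite configuration of gliders. Accordingly, the first step is to assemble the single cell produced by the theorem into a tessellation by tiling a quadrant (or a lower half-plane) of the paper with copies of it, identifying the three input creases of each cell with the output creases of the cells immediately above it. By construction of the cell, any flat-folded state of the resulting crease pattern is forced to record, row by row, the entire space--time diagram of a Rule 110 computation: the optional creases fixed along the top edge play the role of the initial configuration, and the states of the optional creases in row $t$ are exactly the Rule 110 image of those in row $t-1$. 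The unmodified tessellation (with the top row set periodically) encodes the all-ether evolution, and a finite modification of the optional creases along the top edge installs the input gliders.

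For \textbf{P}-hardness I would describe an explicit logspace reduction from the Rule 110 prediction problem. Given an instance --- an initial ether-plus-gliders configuration, a target cell, a target time step $t$, and a target bit --- the reduction outputs the crease pattern consisting of (i) the periodic tessellation of the cell, described by a constant-size template together with the polynomially bounded, hence logspace-writable, coordinate ranges, (ii) the finite list of top-edge optional-crease modifications that install the input gliders, and (iii) one further constraint fixing the optional crease at the cell position corresponding to (target cell, time $t$) to the target bit. Because Rule 110 is forward-deterministic, the resulting pattern flat-folds if and only if the forced value at that space--time location agrees with the target bit, i.e.\ if and only if the prediction instance is a yes-instance. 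All of the output is produced by copying constant-size gadget descriptions and manipulating coordinates of magnitude polynomial in the input size, which can be done in logarithmic space; hence the problem is \textbf{P}-hard under logspace reductions.

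For membership in \textbf{P} I would argue that the flat-foldability of such a finitely-modified tessellation is decidable by a bounded amount of Rule 110 simulation. Since the underlying tessellation already flat-folds, the only obstructions a finite modification can introduce are local incompatibilities among the finitely many affected creases; by the design of the cell these are precisely (a) the per-cell flat-foldability conditions --- Kawasaki's theorem at each vertex together with the non-self-intersection and layer-ordering conditions of Section~\ref{sec:prelim} --- and (b) the Rule 110 update relating each cell's output crease to its inputs. Letting $N$ be the diameter of the modified region, one simulates Rule 110 for $O(N)$ steps (enough to cover every fixed crease), checks the constant-size constraint system inside each relevant cell, and verifies consistency with the fixed values; this runs in polynomial time, so the problem lies in \textbf{P}. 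Combined with the hardness reduction, this proves the corollary.

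The step I expect to be the main obstacle is establishing the boundedness claim underlying membership in \textbf{P}: that the influence of a finite modification really is confined to a polynomially bounded window of the paper. As the introduction stresses, flat-foldability is a genuinely global property --- distant parallel creases can interfere, and layer orderings can be forced by far-away faces --- so one must show, via a structural analysis of how the layer orderings of the tessellated cell decouple across cell boundaries, that no long-range obstruction to flat-foldability can arise beyond what the local Rule 110 dynamics already account for. The hardness direction, by contrast, is largely bookkeeping once the theorem is available, although care is still needed to realize the Rule 110 ether and glider insertions as honest finite modifications of one fixed tessellation and to keep the coordinate computations inside logarithmic space.
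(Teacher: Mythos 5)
Your proposal follows essentially the same route as the paper: tile the plane with the Rule 110 cell produced by the Theorem, realize the Neary--Woods ether-plus-gliders instances as finite modifications of the ground tessellation, and invoke \cite[Theorem 1]{Woods}. Where you go beyond the paper's own (quite informal) treatment in Section~\ref{sec:tess}: the paper never writes down an explicit reduction from the \emph{prediction} problem --- it instead characterizes flat-foldability by whether ``the input reverts to the standard spaceships after finitely many steps,'' a halting-type criterion suited to the Turing-completeness claim rather than to P-completeness --- whereas your device of pinning the optional crease at the target space--time location to the target bit is what actually makes the logspace reduction from the P-complete prediction problem go through, and it is permitted by the paper's problem definition (modification of an optional crease into a mandatory one). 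You also attempt the membership-in-\textbf{P} direction, which the paper omits entirely; your simulation argument has the right shape, but it is only polynomial if the coordinates of the modified creases are effectively unary --- a pinned crease at distance $2^n$ from the inputs costs $n$ bits to write down but $2^n$ rows of Rule 110 to simulate --- a formalization issue neither you nor the paper resolves. Finally, the ``main obstacle'' you flag, namely that the layer orderings of tessellated cells decouple so that global flat-foldability reduces to the per-cell conditions plus the Rule 110 update, is likewise unproven in the paper; it is absorbed into the blanket disclaimer that correctness of the stated foldings ``can be verified directly via folding.'' In short, your proposal is consistent with the paper's argument and, if anything, more explicit about where the real work lies.
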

Our approach is to make use of \textit{optional creases} in our crease patterns
to help encode Boolean variables and design logic gates, an approach that has
been used in prior work to explore the complexity of origami \cite{rigidcom}.
In Section~\ref{sec:prelim} we will formally define our model of flat-foldable
origami, define Rule 110, and establish conventions in our approach.  In
Section~\ref{sec:gates} we will define and prove the correctness of the origami
gadgets we will use to transmit Boolean signals and simulate logic gates.  In
Section~\ref{sec:110} we will put our gadgets together to simulate cellular
automata, in particular Rule 110.

\subsection*{Acknowledgements}  The first author is supported in part by NSF DMS-1906202
and DMS-2347000. The second author is supported in part by NSF DMS-1846767.  The
authors would also like to thank Damien Woods for helpful comments and the reference to his paper \cite{Woods}.

\section{Conventions and preliminaries}\label{sec:prelim}

What it means to ``fold a piece of a paper'' or for a crease pattern to ``fold
flat'' is fairly intuitively obvious.  However, as with many intuitively obvious
things, it turns out to be fairly difficult to write rigorously.  This section
lays out the rigorous definitions and background necessary for the techinical
analysis in the paper, but it is not necessary for a first (or even a third)
understanding, and many readers can freely skip it.

\subsection{Background}\label{ssec:back}

We follow a model and terminology for planar, two-dimensional flat origami as presented in \cite{boxpleat} and \cite{GFALOP}.\footnote{The flat-folding of manifolds in general dimension is also possible and follows many of the properties of the flat, 2D case presented here.  See \cite{Robertson} and \cite[Chapter 10]{origametry} for more details.} A flat-folded piece of paper may be modeled using two structures: an isometric folding map and a layer ordering.  An \textit{isometric folding map} is a continuous, piecewise isometry $f:P\subseteq\mathbb{R}^2\to\mathbb{R}^2$ where $P$ is closed. The \textit{crease pattern} of $f$, denoted $X_f$, is the set of points on $P$ on which $f$ is non-differentiable, union with the boundary of $P$. One can prove \cite{origametry, Robertson} that 
\begin{itemize}
\item $X_f$ is a plane graph on $P$ whose interior edges, which we call \textit{creases}, are straight line segments,
\item every interior vertex of $X_f$ has even degree,
\item the faces defined by the embedding of $X_f$ on $P$ are 2-colorable, where one color class is made of regions of $P$ whose orientation are preserved under $f$ and the other color class faces are orientation-reversed under $f$, and
\item around each interior vertex $v$ of $X_f$ the alternating sum of the sector angles between the creases at $v$, say going in order counterclockwise, equals zero (this is called \textit{Kawasaki's Theorem}).
\end{itemize}

We will use Kawasaki's Theorem throughout our proofs, and so we formalize what it states for a single vertex in a crease pattern: 

\begin{theorem}[Kawasaki's Theorem {\cite[Theorem 5.37]{origametry}}]\label{thm:Kawasaki}
A collection of line segments or rays that share a common endpoint $v\in\mathbb{R}^2$ and whose consecutive sector angles are $\alpha_1,\ldots, \alpha_{2n}$ will be flat-foldable (meaning they are part of a crease pattern $X_f$ for some isometric folding map $f$) if and only if $\sum (-1)^k\alpha_k = 0$. Since our crease patterns exist in a flat plane, this is equivalent to 
$$\alpha_1+\alpha_3+\cdots+\alpha_{2n-1} = \alpha_2+\alpha_4+\cdots+\alpha_{2n} = \pi.$$
\end{theorem}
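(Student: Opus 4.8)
The plan is to prove both directions by examining how $f$ acts on a small circle $C_\varepsilon$ of radius $\varepsilon$ centered at $v$, with $\varepsilon$ chosen small enough that $C_\varepsilon$ meets the crease pattern only in the $2n$ rays through $v$. This circle is cut by the creases into $2n$ arcs of angular lengths $\alpha_1,\dots,\alpha_{2n}$, and each arc lies in a single sector on which $f$ restricts to a Euclidean isometry of the plane. Because consecutive sectors share a crease edge and $f$ is continuous, the isometries on two adjacent sectors must agree along that shared crease line; since one of the two is orientation-preserving and the other orientation-reversing (this is exactly the $2$-colorability / alternating-orientation property recorded in the bullet list above, which also forces every interior vertex to have even degree), the two isometries differ precisely by the reflection across the common crease's image.

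For necessity, I would track the ``angular coordinate'' of the image of $C_\varepsilon$ under $f$. Placing $f(v)$ at the origin and normalizing so that $f$ is the identity on the first sector, the image of $C_\varepsilon$ stays on the circle of radius $\varepsilon$ about the origin; as the parameter sweeps across arc $1$ the angular coordinate of the image increases by $\alpha_1$, then crossing the first crease flips the orientation so across arc $2$ it decreases by $\alpha_2$, and so on, alternating. After traversing all $2n$ arcs we have returned to the starting point of the loop $C_\varepsilon$, so the net angular change $\alpha_1-\alpha_2+\alpha_3-\cdots-\alpha_{2n}$ must be an integer multiple of $2\pi$. But $\alpha_1+\alpha_3+\cdots+\alpha_{2n-1}$ and $\alpha_2+\alpha_4+\cdots+\alpha_{2n}$ are each strictly between $0$ and $2\pi$, being proper partial sums of positive angles totaling $2\pi$, so their difference lies in $(-2\pi,2\pi)$, whose only integer multiple of $2\pi$ is $0$. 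Hence $\sum(-1)^k\alpha_k=0$, and combining with $\sum_k\alpha_k=2\pi$ yields $\alpha_1+\alpha_3+\cdots=\alpha_2+\alpha_4+\cdots=\pi$.

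For sufficiency, I would run this construction in reverse to build $f$ on a disk neighborhood $D$ of $v$. Define a piecewise-linear ``folded angle'' function $g\colon[0,2\pi]\to\mathbb{R}$ with $g(0)=0$ whose slope is $+1$ on the first arc, $-1$ on the second, $+1$ on the third, and so on; the Kawasaki condition says exactly that $g(2\pi)=0$, so $g$ descends to a well-defined map on $\mathbb{R}/2\pi\mathbb{Z}$. Then set $f(r,\theta)=(r,g(\theta))$ in polar coordinates centered at $v$. On each sector this is $(r,\theta)\mapsto(r,\pm\theta+c)$, a rotation or a reflection, hence an isometry; the restrictions on adjacent sectors agree along the shared crease because $g$ is continuous, so $f$ is continuous on $D$; and $f$ fails to be differentiable exactly on the $2n$ rays, where $g$ has a corner, so $X_f$ consists of precisely those rays together with $\partial D$. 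Thus the given collection of segments/rays is the crease pattern of an isometric folding map, as required. (If the full flat-folded structure is wanted, a valid layer ordering for a single vertex can be supplied by the standard crimp/all-layers argument, but this is not needed for the statement as phrased.)

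The step I expect to be the main obstacle is the rigor in the necessity direction: arguing carefully that $f$ restricted to each sector is a genuine global Euclidean isometry, that adjacent restrictions differ by reflection across the image crease, and — the genuinely delicate point — that the alternating sum, which the closing-up condition constrains only modulo $2\pi$, is pinned to exactly $0$ by the size bounds on the partial sums. The even number of sectors is used implicitly throughout (an odd number would make the loop $C_\varepsilon$ map by an orientation-reversing isometry overall and the bookkeeping would fail to close up), and one should flag that this parity is a consequence of the alternating-orientation property rather than an independent hypothesis.
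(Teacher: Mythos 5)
The paper does not prove this statement at all---it is quoted as a classical background result with a citation to \cite[Theorem 5.37]{origametry}---so there is no in-paper proof to compare against. Your argument is correct and is essentially the standard proof of Kawasaki's theorem: necessity by tracking the alternating angular displacement of the image of a small circle around $v$ (using the alternating-orientation property of adjacent sectors) and pinning the resulting multiple of $2\pi$ to exactly $0$ via the bound $0<\alpha_1+\alpha_3+\cdots<2\pi$ on proper partial sums, and sufficiency via the accordion construction $f(r,\theta)=(r,g(\theta))$ with $g$ piecewise linear of slope $\pm 1$. You are also right that under the paper's stated definition of flat-foldable (existence of an isometric folding map, with no global layer ordering demanded), the crimp/layer-ordering argument is not required, so your proof covers the statement as phrased.
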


\begin{figure}
\centering
 \begin{tikzpicture}[scale=2]
 \draw (0,0) ellipse (.7 and .35);
 \fill[white] (0,.2) ellipse (.7 and .35);
 \draw (0,.2) ellipse (.7 and .35);
 \node at (-1,.3) {$f(U_1)$};
 \node at (-1,-.1) {$f(U_2)$};

\draw (1.2,-.45) .. controls (1.9,-.65) and (3.1,.15) .. (2.6,.4);
\fill[white] (2,.1) ellipse (.7 and .35);
\draw (2,.1) ellipse (.7 and .35);
\fill[white] (1.2,-.3) .. controls (1.9,-.5) and (3.1,.3) .. (2.5,.5);
\draw (1.2,-.3) .. controls (1.9,-.5) and (3.1,.3) .. (2.5,.5);
\draw (1.2,-.3) to (2.5,.5);
\draw (1.2,-.3) to (1.2,-.45);
%\draw (1.4,-.45) arc (270:90:.075);
 \node at (1.07,.2) {$f(U_2)$};
 \node at (2.1,0) {$f(U_1)$};
 \node at (2.1,-.5) {$f(U_3)$};
 \node at (1.85,.2) {$e$};

\draw (3.2,-.6) .. controls (3.9,-.8) and (5.3,0) .. (4.6,.25); 
\fill[white] (3.2,-.4) .. controls (3.9,-.6) and (5.3,.2) .. (4.6,.45);
\draw (3.2,-.4) .. controls (3.9,-.6) and (5.3,.2) .. (4.6,.45);
\fill[white] (3.2,-.2) .. controls (3.9,-.4) and (5.3,.4) .. (4.6,.65);
\draw (3.2,-.2) .. controls (3.9,-.4) and (5.3,.4) .. (4.6,.65);
\draw (3.2,-.2) to (4.6,.45);
\fill[white] (3.2,0) .. controls (3.9,-.2) and (5.3,.6) .. (4.6,.8);
\draw (3.2,0) .. controls (3.9,-.2) and (5.3,.6) .. (4.6,.8);
\draw (3.2,0) to (4.6,.8);

\draw (3.2,-.4) .. controls (3.1,-.4) and (3.1,0) .. (3.2,0);
\draw (3.2,-.6) .. controls (3.1,-.6) and (3.1,-.2) .. (3.2,-.2);
\node at (5.05,.7) {$f(U_1)$};
\node at (5.05,.45) {$f(U_2)$};
\node at (5.05,.2) {$f(V_1)$};
\node at (5.05,.-.05) {$f(V_2)$};
\node at (3.03,-.2) {$e_1$};
\node at (3.03,-.45) {$e_2$}; 

\node at (-1.1,.7) {(a)};
\node at (1,.7) {(b)};
\node at (3,.7) {(c)};
\end{tikzpicture}
\caption{(a) The tortilla-tortilla condition being satisfied.  (b) The taco-tortilla condition \textit{not} being satisfied. (c) The taco-taco condition \textit{not} being satisfied.}
\label{fig:tacos}
\end{figure}

Modeling flat-folded origami also requires the concepts of layer ordering and mountain-valley creases, which require additional structure be added to an isometric folding map.  First, we introduce some terminology. A simply connected subset of $U\subset P$ is called \textit{uncreased} under $f$ if $f$ restricted to $U$ is injective. Two simply connected subsets $U_1,U_2\subset P$ \textit{overlap} under $f$ if $f(U_1)\cap f(U_2)\not= \emptyset$, and we say that $U_1$ and $U_2$ \textit{strictly overlap} under $f$ if $f(U_1)=f(U_2)$.

A \textit{global layer ordering} for an isometric folding map $f$ is a function $\lambda_f: A\subset P\times P \to \{-1,1\}$ that records which points of $P$ are above/below which others when folded under $f$, with $\lambda_f(p,q)=1$ meaning that $p$ is below $q$ and $\lambda_f(p,q)=-1$ meaning $p$ is above $q$.  Specifically, $\lambda_f$ is a global layer ordering if the following six properties are satisfied (adopted from \cite{boxpleat}):

\begin{itemize}
\item \textbf{Existence:} The domain $A$ is defined as all $(p,q)\in P\times P$ such that $f(p)=f(q)$. That is, the layer ordering $\lambda_f$ only exists between points that overlap in the folding.

\item \textbf{Antisymmetry:}   $\lambda_f(p,q)=-\lambda_f(q,p)$ for all $(p,q)\in A$.  That is, if $p$ is above $q$ then $q$ is below $p$.

\item \textbf{Transitivity:} If $\lambda_f(p,q)=\lambda_f(q,r)$ then $\lambda_f(p,r)=\lambda_f(p,q)$.  That is, if $q$ is above $p$ and $r$ is above $q$, then $r$ is above $p$.

\item \textbf{Tortilla-Tortilla Property (Consistency):} For any two uncreased,
  simply connected subsets $U_1, U_2\subset P$ that strictly overlap under $f$,
  $\lambda_f$ has the same value for all $(p,q)\in U_1\times U_2$. I.e.,
  if two regions in $P$ completely overlap under $f$, then one must be entirely
  above the other.  This is illustrated in Figure~\ref{fig:tacos}(a).

\item \textbf{Taco-Tortilla Property (Face-Crease Non-crossing):} For any three
  uncreased, simply connected subsets $U_1, U_2, U_3\subset P$ such that (a)
  $U_1$ and $U_3$ are separated by an edge  $e$ in $X_f$ (i.e., adjacent regions
  in $X_f$) and strictly overlap under $f$ and (b) $U_2$ overlaps the edge $e$
  under $f$, then $\lambda_f(p,q)=-\lambda(q,r)$ for any points $(p,q,r)\in
  U_1\times U_2\times U_3$.  I.e., if a region overlaps a nonadjacent
  internal crease, the region cannot lie between the regions adjacent to the
  crease in the folding. This is illustrated in Figure~\ref{fig:tacos}(b).

\item \textbf{Taco-Taco Property (Crease-Crease Non-crossing):} If we have
  uncreased, simply connected adjacent subsets $U_1$ and $V_1$ of $P$ separated
  by a crease $e_1$ in $X_f$ and $U_2$ and $V_2$ separated by a crease $e_2$
  such that the subsets all strictly overlap under $f$ and the creases $e_1$ and
  $e_2$ strictly overlap under $f$, then for any point $(p,q,r,s)\in U_1\times
  V_1\times U_2\times V_2$ either $\{\lambda_f(p,r), \lambda_f(p,s),
  \lambda_f(q,r), \lambda_f(q,s)\}$ are all the same or half are $+1$ and half
  are $-1$.  I.e., if two creases overlap in the folding, either the regions of
  paper adjacent to one crease lie entirely above the regions of paper adjacent
  to the other crease, or the regions of one nest inside the regions of the
  other. This is illustrated in Figure~\ref{fig:tacos}(c).

\end{itemize}

A global layer ordering ensures that if an actual piece of paper $P$ is to be folded according to an isometric folding map $f$ as determined by its crease pattern $X_f$, then this can be done without $P$ intersecting itself. This is the generally-accepted definition of what it means for a crease pattern to be \textit{globally flat-foldable} \cite{boxpleat,GFALOP,origametry,Justin97}. 

An isometric folding map $f$ and global layer ordering $\lambda_f$ determine a dichotomy for the creases of $X_f$, called the \textit{mountain-valley (MV) assignment for $(f,\lambda_f)$}.  Specifically, if a crease $e$ of $X_f$ is bordered by faces $U_1$ and $U_2$ and $p\in U_1$, $q\in U_2$ are close to $e$ with $f(p)=f(q)$, then
\begin{itemize}
\item if the orientation of $U_1$ is preserved under $f$ and $\lambda_f(p,q)=1$, then $e$ is a \textit{valley} crease, and
\item if the orientation of $U_1$ is preserved under $f$ and $\lambda_f(p,q)=-1$, then $e$ is a \textit{mountain} crease.
\end{itemize}
Mountain and valley creases correspond to what we see in physically folded paper, where paper bends in either the $\wedge$ (mountain) or $\vee$ (valley) direction.  A fundamental result about the mountains and valleys that meet at a flat-folded vertex, which we will often use in our proofs, is \textit{Maekawa's Theorem}:

\begin{theorem}[Maekawa's Theorem {\cite[p. 81]{origametry}}]
  If a crease pattern flat-folds around a vertex, the difference between the
  number of mountain folds and the number of valley folds meeting at that vertex
  must be $2$.
\end{theorem}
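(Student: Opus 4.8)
The plan is to prove this by induction on $n$, where the vertex has degree $2n$ (which is even, as noted in the list of properties of $X_f$ above). For the base case $2n=4$, Kawasaki's Theorem forces $\alpha_1+\alpha_3=\alpha_2+\alpha_4=\pi$, and I would check directly that the mountain--valley assignment of a flat-foldable degree-$4$ vertex cannot consist of two mountains and two valleys: in either the $MMVV$ or the $MVMV$ cyclic pattern, folding the vertex flat forces the two sectors of one orientation class to interleave with the two of the other class in a way that violates one of the Taco-Taco or Taco-Tortilla conditions of Section~\ref{sec:prelim}. Hence a degree-$4$ flat-foldable vertex has three creases of one type and one of the other, so $\#M-\#V=\pm 2$.

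For the inductive step ($2n\ge 6$), the idea is to ``crimp out'' a sector of minimal angle. Choose a sector $\sigma_i$ whose angle $\alpha_i$ is strictly smaller than those of both of its neighbours $\sigma_{i-1}$ and $\sigma_{i+1}$; such a strict local minimum exists except in degenerate cases (for instance when all sector angles are equal), which can be dispatched by an arbitrarily small perturbation of the crease directions --- this changes neither flat-foldability nor the MV-assignment. The engine of the argument is a ``big--little--big'' lemma: \emph{in any flat folding, the two creases $c_{i-1},c_i$ bounding a strictly locally minimal sector $\sigma_i$ must have opposite parity, one a mountain and one a valley.} Granting this, one performs the crimp: fold $c_{i-1}$ and $c_i$ (necessarily one as a mountain and one as a valley) so that $\sigma_i$ tucks between $\sigma_{i-1}$ and $\sigma_{i+1}$, which is possible without self-intersection precisely because $\alpha_i$ is strictly smallest. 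What remains is a flat folding of a configuration with $2n-2$ creases, obtained by merging $\sigma_{i-1},\sigma_i,\sigma_{i+1}$ into a single sector (of angle $\alpha_{i-1}-\alpha_i+\alpha_{i+1}$) and restricting the isometric folding map and layer ordering. Since the crimp removes exactly one mountain and one valley, $\#M-\#V$ is unchanged, and the inductive hypothesis finishes the argument.

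The main obstacle is the big--little--big lemma. Its proof has to extract genuine content from the non-self-intersection axioms: if $c_{i-1}$ and $c_i$ had the same parity, then the reflected image of $\sigma_i$ would be forced either to cross one of the adjacent layers or to trap a layer of paper inside the fold, contradicting the Taco-Tortilla or Taco-Taco property. A secondary technical burden is making the crimp precise at the level of the folding map and layer ordering --- verifying that the merged configuration again satisfies all six layer-ordering properties, and keeping the angle bookkeeping straight (the merged configuration is really a ``cone'' of total angle $2\pi-2\alpha_i$, so one must either run the induction in that slightly more general setting or argue that one may always reduce back down to the degree-$4$ case). An alternative route avoids the induction: take a small circle around the vertex, note that its folded image is a closed curve that reverses direction at each of the $2n$ crease points, and balance the $\pm\pi$ turning contributed by the mountain cusps against that of the valley cusps using the total turning of the curve; but deciding which way each cusp resolves requires the global layer ordering and is about as delicate as the big--little--big lemma itself.
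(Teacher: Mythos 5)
The paper does not prove this statement: Maekawa's Theorem is quoted as background with a citation to the literature, so there is no in-paper argument to compare against. Evaluating your proposal on its own terms: the crimp-and-induct strategy resting on the big--little--big lemma is a legitimate and well-known route to Maekawa, and you have correctly isolated the two real burdens (deriving the lemma from the layer-ordering axioms, and the fact that the reduced configuration is a cone of total angle $2\pi-2\alpha_i$, so the induction must be run for even-degree cones rather than only flat vertices).

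There is, however, one concrete gap: the perturbation used to dispatch the degenerate equal-angle case is invalid. Flat-foldability with a \emph{fixed} MV assignment is not stable under small perturbations of the sector angles --- your own big--little--big lemma is precisely a proof of this instability. For example, the degree-$4$ vertex with all sector angles $\pi/2$ and cyclic assignment $M,M,M,V$ folds flat, and the sector between two of the mountains is a (weak) local minimum; perturb the angles, respecting Kawasaki, so that this sector becomes a \emph{strict} local minimum, and the big--little--big lemma now forbids its two bounding creases from both being mountains, so the perturbed vertex with the same MV assignment no longer folds flat at all. Since your inductive machinery only applies to configurations that do fold flat, nothing about the original vertex can be inferred from the perturbed one, and the degenerate case is exactly where the argument is silent. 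The standard repair is a generalized big--little--big lemma for a maximal run of equal, locally minimal sector angles $\alpha_{i-1}\ge\alpha_i=\cdots=\alpha_{i+k}\le\alpha_{i+k+1}$: among the $k+1$ creases interior to the run the mountain and valley counts differ by exactly one, and the whole run is crimped out at once; with that in place (and the induction stated for even-degree cones, with the degree-$2$ cone as base case), the argument closes. The cross-section/turning-number proof you mention in closing is the shorter standard one, but as you correctly observe, resolving the sign of each cusp and pinning the winding number at $\pm 1$ consumes the same non-self-intersection input, so it is not a free lunch --- it is simply where the appeal to the global layer ordering is made all at once rather than crease by crease.
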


A generalization of Maekawa and Kawasaki's Theorems that we will need in the proof of Proposition~\ref{prop:onevalley} below is the following:

\begin{theorem}[Justin's Theorem {\cite[p. 116]{Justin97,origametry}}]
Let $\gamma$ be a simple, closed, vertex-avoiding curve on a flat-foldable crease pattern.  Let $\alpha_i$ be the signed angles, in order, between the consecutive creases that $\gamma$ crosses, for $1\leq i\leq 2n$. Also let $M$ and $V$ be the number of mountain and valley creases, respectively, that $\gamma$ crosses.  Then,
\begin{equation}\label{eq:Justin}
\alpha_1+\alpha_3+\cdots+\alpha_{2n-1}\equiv \alpha_2+\alpha_4+\cdots+\alpha_{2n} \equiv \frac{M-V}{2}\pi \mod 2\pi.
\end{equation}
\end{theorem}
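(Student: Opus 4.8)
\subsection*{Proof proposal}

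The plan is to traverse the curve $\gamma$ once and record how the folding map $f$ transforms along the way. Since $\gamma$ avoids vertices, it passes through a cyclic sequence of faces $F_0,F_1,\ldots,F_{2n-1}$ of $X_f$ (indices mod $2n$, so $F_{2n}=F_0$), crossing creases $c_1,\ldots,c_{2n}$ with $c_i$ separating $F_{i-1}$ from $F_i$; the number of crossings is even because the faces of $X_f$ are properly $2$-colored by their orientation behavior under $f$, so each crossing switches color and returning to $F_0$ requires an even number of switches. On each face $F_i$ the map $f$ agrees with a single global isometry $\rho_i$ of $\mathbb{R}^2$. Along the segment $c_i$ the maps $\rho_{i-1}$ and $\rho_i$ agree and induce opposite orientations (consecutive faces have opposite orientation under $f$); since two isometries agreeing on a line and inducing opposite orientations differ exactly by the reflection across that line, $\rho_i=\rho_{i-1}\circ r_{\ell_i}$, where $r_{\ell_i}$ is reflection across the line $\ell_i\supseteq c_i$. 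Composing around the loop and using $F_{2n}=F_0$ gives the key identity $r_{\ell_1}\circ r_{\ell_2}\circ\cdots\circ r_{\ell_{2n}}=\mathrm{id}$.

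Next I would extract the angle condition from this identity. Pairing reflections, $r_{\ell_{2k-1}}\circ r_{\ell_{2k}}$ has linear part equal to the rotation by $2(\theta_{2k-1}-\theta_{2k})$, where $\theta_i$ denotes the direction of $\ell_i$; hence the linear part of the whole composition is the rotation by $2\big((\theta_1-\theta_2)+(\theta_3-\theta_4)+\cdots+(\theta_{2n-1}-\theta_{2n})\big)$, which must be trivial. Reading $\theta_{i+1}-\theta_i$ as the signed angle $\alpha_i$ between consecutive crossed creases, this gives $\alpha_1+\alpha_3+\cdots+\alpha_{2n-1}\equiv\alpha_2+\alpha_4+\cdots+\alpha_{2n}\equiv 0\pmod{\pi}$, which is the assertion of \eqref{eq:Justin} modulo $\pi$. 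Note that this half of the argument uses only that $f$ is an isometric folding map and never sees the layer ordering.

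The remaining work — and the main obstacle — is sharpening $\bmod\,\pi$ to $\bmod\,2\pi$, which is precisely where the mountain--valley assignment must be used. Rather than the rotational part of the composition taken $\bmod\,2\pi$, one must track an honest real-valued accumulated turning along the traversal (a lift through the universal cover of the rotation group), and the MV type of each $c_i$ is exactly what selects the correct branch: crossing a mountain versus a valley crease contributes accumulated turnings differing by $\pi$, so the total comes out to $\tfrac{M-V}{2}\pi$ instead of merely $0 \bmod \pi$. I would organize this bookkeeping by induction on the number of vertices of $X_f$ enclosed by $\gamma$. For a small loop around a single degree-$2n$ vertex the statement is exactly Kawasaki's Theorem (Theorem~\ref{thm:Kawasaki}), giving $\alpha_1+\alpha_3+\cdots=\pi$, together with Maekawa's Theorem, giving $M-V=\pm2$ and hence $\tfrac{M-V}{2}\pi\equiv\pi\pmod{2\pi}$. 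A general $\gamma$ is then reduced to this case by decomposing it, up to vertex-avoiding isotopy, into such small loops joined by thin corridors and verifying that both sides of \eqref{eq:Justin} change compatibly when two loops are merged and when $\gamma$ is pushed across a crease. Checking that compatibility — in particular that the extra $\pm\pi$ contributions coming from corridors that cross creases cancel against the corresponding changes in the alternating angle sum — is the technical heart of the argument.
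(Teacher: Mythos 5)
The paper does not prove this statement: Justin's Theorem is quoted from the literature (Justin's 1997 paper and Hull's \emph{Origametry}), so there is no in-paper proof to compare against. Judged on its own, your first half is correct and standard: the faces crossed by $\gamma$ carry isometries $\rho_i$ that differ across each crease by the reflection in its supporting line, the loop closes up to give $r_{\ell_1}\circ\cdots\circ r_{\ell_{2n}}=\mathrm{id}$, and reading off the linear part yields the alternating angle condition modulo $\pi$. As you yourself note, this argument never sees the layer ordering, so it cannot possibly produce the $\tfrac{M-V}{2}\pi$ term; everything that distinguishes Justin's Theorem from its mod-$\pi$ shadow lives in the half you have not written.

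That second half has a genuine gap, not just missing routine detail. The claim that ``crossing a mountain versus a valley crease contributes accumulated turnings differing by $\pi$'' is essentially the theorem itself restated, and you give no mechanism connecting the layer ordering to the choice of lift. The proposed induction is also not a safe fallback as sketched: when two loops are merged along a corridor, every crease the corridor crosses is crossed twice, so $M$ or $V$ increases by $2$ and $\tfrac{M-V}{2}\pi$ jumps by $\pm\pi$; for the identity to be preserved the signed alternating angle sum must jump by a matching $\pi$ modulo $2\pi$, and verifying that requires tracking how the parity of the crossing positions reshuffles in the merged cyclic order --- this is exactly a local re-proof of the theorem for a two-crease bigon, i.e.\ the content you are trying to establish, so the induction is circular until that lemma is proved independently. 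A cleaner route, and the one the cited sources take, is to apply the Umlaufsatz to the folded image curve $f\circ\gamma$: its total turning is a multiple of $2\pi$, within each face it turns by $\pm$ the original turning according to orientation, and at each crease crossing it has a corner with exterior angle $\pm(\pi-2\varphi_i)$ whose sign is determined by the mountain/valley assignment (this is where the layer ordering enters, concretely and only once); summing these contributions gives \eqref{eq:Justin} directly, with no induction over enclosed vertices. I would either carry out that computation or, at minimum, prove the bigon case of your merging step before claiming the induction closes.
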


Computing a global layer ordering, or determining than none exist, for a given isometric folding map is computationally intensive and the main reason why the global flat-foldability problem is NP-hard \cite{Bern}. A useful tool that we will employ (specifically in the proof of Lemma~\ref{lem:problem} below) is the \textit{superposition net} (or \textit{s-net} for short) that Justin introduced in \cite{Justin97}. The s-net is a superset of the crease pattern $X_f$ of an isometric folding map given by $S_f=f^{-1}(f(X_f))$.  That is, $S_f$ is the pre-image of the folded image of the crease pattern. This is helpful because the points of $S_f$ are places where the tortilla-tortilla, taco-tortilla, or taco-taco properties might fail. See \cite[Section 6.5]{origametry} for more details.

\subsection{Rule 110 and our conventions}\label{sec:Rule110}

Rule 110 is an elementary (1-dimensional) cellular automaton using the rule table shown in Figure~\ref{fig:rule110}.
We model a $1$ as TRUE and a $0$ as FALSE, or black and white pixels, as in Figure~\ref{fig:rule110} where each 1-dimensional state of the cellular automaton is stacked vertically to show the step-by-step evolution of the system.   Note that if all inputs are set to
$0$ the automaton stays constant at $0$.

\newsavebox{\tempbox}

\begin{figure}
\centering
\sbox{\tempbox}{
\begin{tikzpicture}[scale=.4]
\fill (0,0) rectangle (1,1);
\fill (-1,-1) rectangle (1,0);
\fill (-2,-2) rectangle (1,-1);
\fill (-3,-3) rectangle (-1,-2); \fill (0,-3) rectangle (1,-2);
\fill (-4,-4) rectangle (1,-3);
\fill (-5,-5) rectangle (-3,-4); \fill (0,-5) rectangle (1,-4);
\fill (-6,-6) rectangle (-3,-5); \fill (-1,-6) rectangle (1,-5);
\fill (-7,-7) rectangle (-5,-6); \fill (-4,-7) rectangle (-3,-6); \fill (-2,-7) rectangle (1,-6);
\fill (-8,-8) rectangle (-1,-7); \fill (0,-8) rectangle (1,-7);
\fill (-9,-9) rectangle (-7,-8); \fill (-2,-9) rectangle (1,-8);
\fill (-10,-10) rectangle (-7,-9); \fill (-3,-10) rectangle (-1,-9); \fill (0,-10) rectangle (1,-9);
\end{tikzpicture}
}% 
\subfloat{\usebox{\tempbox}}% 
%\qquad
\subfloat{% % %
    \vbox to \ht\tempbox{%
      \vfil
\hspace{-1.5in}\begin{tabular}{r|c|c|c|c|c|c|c|c}
    Current pattern & 111&110&101&100&011&010&001&000 \\
    \hline
    New state & 0 & 1 & 1 & 0 & 1 & 1 & 1 & 0 
  \end{tabular}
      \vfil}}%

%\begin{tblr}{Q[m]Q[t]}
%
%\begin{tabular}{r|c|c|c|c|c|c|c|c}
%    Current pattern & 111&110&101&100&011&010&001&000 \\
%    \hline
%    New state & 0 & 1 & 1 & 0 & 1 & 1 & 1 & 0 
%  \end{tabular}
%  &
%  \begin{tikzpicture}[scale=.5]
%\fill (0,0) rectangle (1,1);
%\fill (-1,-1) rectangle (1,0);
%\fill (-2,-2) rectangle (1,-1);
%\fill (-3,-3) rectangle (-1,-2); \fill (0,-3) rectangle (1,-2);
%\fill (-4,-4) rectangle (1,-3);
%\fill (-5,-5) rectangle (-3,-4); \fill (0,-5) rectangle (1,-4);
%\fill (-6,-6) rectangle (-3,-5); \fill (-1,-6) rectangle (1,-5);
%\fill (-7,-7) rectangle (-5,-6); \fill (-4,-7) rectangle (-3,-6); \fill (-2,-7) rectangle (1,-6);
%\fill (-8,-8) rectangle (-1,-7); \fill (0,-8) rectangle (1,-7);
%\fill (-9,-9) rectangle (-7,-8); \fill (-2,-9) rectangle (1,-8);
%\fill (-10,-10) rectangle (-7,-9); \fill (-3,-10) rectangle (-1,-9); \fill (0,-10) rectangle (1,-9);
%\end{tikzpicture}
%\end{tblr}
\caption{The table for Rule 110 and ten rows of its evolution from a single TRUE pixel.}\label{fig:rule110}
\end{figure}

We will simulate Rule 110 in an origami crease pattern by establishing conventions by which the creases can be interpreted as storing and manipulating Boolean variables.
In a similar strategy to prior work on origami complexity
\cite{boxpleat,rigidcom,Bern}, we use \textit{directed pleats} (sequences of
parallel mountain/valley crease lines) to send TRUE/FALSE signals across the
folded paper; we call such directed pleats \textit{wires}.  Our wires are
triplets of parallel creases, with a mandatory mountain in the middle and
optional valleys to the left and right. We will orient our crease patterns so
that the direction of all wires is in the ``downward," decreasing $y$ direction
in $\mathbb{R}^2$.  The value of a wire is decided as follows:  If the pleat is
folded to the right relative to its direction then it is FALSE; if it is folded
to the left then it is TRUE. The information in a wire consists of the choice of
which valley fold is used.

\begin{figure}
  \begin{center}
\begin{tikzpicture}[yscale=0.5]  % key
\draw[mountain] (0,0) -- (0.5,0) node[right] {mountain};
\draw[valley] (0,-1) -- (0.5,-1) node[right] {valley};
\draw[mountain_opt] (0,-2) -- (0.5,-2) node[right] {optional mountain};
\draw[valley_opt] (0,-3) -- (0.5,-3) node[right] {optional valley};
\draw[mountain] (5,0) -- (7.5,-2);
\draw[valley_opt] (6,0) -- (8.5,-2);
\draw[valley_opt] (4,0) -- (6.5,-2);
\draw [-stealth](7,-.2) -- (8.5,-1.4);
\node at (6.5,-2.5){a wire};
\end{tikzpicture}
\end{center}
\caption{A guide to the mountain/valley labeling conventions uses in our figures and an example of a wire.}
\label{fig:mvlabels}
\end{figure}

The labeling conventions we will use in the crease pattern figures in this paper are shown in Figure~\ref{fig:mvlabels}, with solid lines depicting mountain creases and dashed lines being valleys, which is standard in much origami literature. Also, black creases will be mandatory and blue will be optional. An illustration of a wire and its direction is also shown in Figure~\ref{fig:mvlabels}.

In this paper we do not prove that the given crease patterns will fold flat in
accordance to the stated operation; that can be verified directly via folding.
In our proofs we will simply verify that the given crease patterns will
\emph{not} fold in other ways.

\subsection{Fundamental results}

\begin{definition}
  In a crease pattern with optional creases, a crease is \emph{active} in a
  flat-folding if it is used.
\end{definition}

\begin{definition}
  In this paper we will be working on an infinite triangular grid of triangles
  with side-length $1$.  For any line on the grid, the \emph{next line} in some
  direction is the closest line in that direction which is parallel to it; if
  $\ell,\ell'$ are two lines and $\ell'$ is the next line then $\ell$ and
  $\ell'$ are \emph{consecutive}.

  A \emph{wire} is three consecutive creases, where the middle crease is a
  mountain crease, and the two outer creases are optional valley creases.
  
  A \emph{gadget} is a subset of a crease pattern in a region of the plane bounded by a 
  simple closed curve such that the only creases that intersect the boundary are
  wires. %\Tom{Do we need to also require that the \textit{only} wires in a    gadget are those that intersect the boundary?} \inna{No, I don't think so.    It's possible to have a gadget that's more than one cell. Consider, for    instance, the Sierpinski thing that we defined.  Why couldn't that be a    gadget?}
\end{definition}

Note that in order for a wire to have a well-defined Boolean value, we need exactly one of its optional valley creases to be active. However, by themselves any wire could have all three of its creases folded, and thus we want the gadgets that wires enter and exit to force the wires to have one, and only one active crease.  The below Proposition will help ensure this.

%\Tom{Do we really need Proposition 6?  It seems to only be used as a way to guarantee that when our logic gadgets fold flat, each wire must have exactly one of the optional creases activated.  But all of our gadgets (except the hexagon and triangle twists, which are handled differently) have prescribed inputs and exactly one output.  Thus, if all but one of the wires of a gadget are determined to be one active M and one active V, then the sole remaining wire cannot have only 1 or 3 active creases, else the boundary of the gadget would cross an odd number of creases, which is impossible for flat origami (by the 2-colorability of faces).  So the output wire must have the mandatory mountain crease and exactly one of the valleys.  Maybe this little argument should be our Proposition 6?  Or am I missing something? (Oh wait --- I see that our Intersector gadgets have two output wires.  Hmmm.)}

\begin{proposition} \label{prop:onevalley}
  Let $G$ be a gadget with angles $\theta_1,\ldots,\theta_k$ between consecutive boundary
  wires (calculated as one transverses the boundary counter-clockwise).  Suppose
  that for all nonempty proper subsets $J\subsetneq \{1,\ldots,k\}$ we have
  \[\sum_{j\in J} \theta_j \not\neq 0 \pmod{\pi}.\]
  If we pick optional creases to make a sub-crease pattern $G'\subset G$ be flat-foldable then each wire in $G'$ contains exactly one active valley fold.  
\end{proposition}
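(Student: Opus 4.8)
The plan is to apply Justin's Theorem to a curve hugging the boundary of the gadget. First, fix a simple closed, vertex-avoiding curve $\gamma$ running just inside $\partial G$ and crossing only creases belonging to the boundary wires; this is possible because, by the definition of a gadget, the wires are the only creases meeting $\partial G$, and sufficiently close to $\partial G$ each wire crease runs straight before meeting an interior vertex. Traversing $\gamma$ counterclockwise, it crosses the active creases of wire $1$, then of wire $2$, and so on cyclically. Let $a_i$ be the number of active creases of wire $i$ crossed by $\gamma$. Since the middle (mountain) crease of a wire is mandatory we have $a_i \ge 1$, and $a_i - 1 \in \{0,1,2\}$ is exactly the number of active valley folds in wire $i$; so the Proposition asserts precisely that $a_i = 2$ for every $i$.

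Next, I would assemble the ingredients of Justin's Theorem for $\gamma$ in the flat-foldable pattern $G'$. The creases crossed by $\gamma$ occur in blocks of parallel creases, one block per wire, so the signed angle between two consecutive creases crossed inside the same wire is $0$, while the signed angle between the last crease of wire $i$ and the first crease of wire $i+1$ is $\theta_i$ (here the counterclockwise convention on the $\theta_i$ is used). Writing $s_i = a_1 + \dots + a_i$, the nonzero angles in Justin's Theorem are therefore exactly $\theta_1, \dots, \theta_k$, with $\theta_i$ sitting in position $s_i$. Moreover $\gamma$ crosses $N := s_k = \sum_i a_i$ creases, and since the preserved/reversed orientation of the current face flips at every crease crossing while $\gamma$ returns to its starting face, $N$ is even; hence $M - V = N - 2V$ is even and $\tfrac{M-V}{2}\pi \equiv 0 \pmod{\pi}$. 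Justin's Theorem thus collapses to the single congruence
\[
\sum_{i \,:\, s_i \text{ odd}} \theta_i \equiv 0 \pmod{\pi}.
\]

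Finally I would argue by contradiction. Suppose $B := \{\, i : a_i \ne 2 \,\} = \{\, i : a_i \text{ odd} \,\}$ is nonempty; since $N = \sum_i a_i$ is even, $|B|$ is even, say $B = \{ m_1 < m_2 < \dots < m_{2\ell} \}$ with $\ell \ge 1$. Because $a_j$ is even for $j \notin B$ and odd for $j \in B$, the parity of $s_i$ equals that of $\#\{\, t : m_t \le i \,\}$, so $\{\, i : s_i \text{ odd} \,\}$ is exactly the alternating union of integer intervals $J := [m_1,m_2) \cup [m_3,m_4) \cup \dots \cup [m_{2\ell-1},m_{2\ell})$. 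This set $J$ is nonempty (it contains $m_1$) and a proper subset of $\{1,\dots,k\}$ (its largest element is $m_{2\ell}-1 < k$), so the displayed congruence forces $\sum_{j \in J} \theta_j \equiv 0 \pmod{\pi}$, contradicting the hypothesis of the Proposition. Hence $B = \emptyset$, i.e. $a_i = 2$ for all $i$, as required.

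The step I expect to be most delicate is the reduction in the second paragraph: one must verify that Justin's signed angle between consecutive parallel creases is $0$ (not $\pi$), that the between-wire contribution is exactly $\theta_i$ under the stated orientation convention, and that $\gamma$ can really be chosen to cross each wire crease once and to avoid all vertices. Once that reduction to $\sum_{s_i \text{ odd}} \theta_i \equiv 0 \pmod{\pi}$ is in hand, the parity fact that $|B|$ is even together with the combinatorics of the alternating union $J$ finishes the argument immediately.
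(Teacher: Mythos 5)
Your proof is correct and follows essentially the same route as the paper's: apply Justin's Theorem to a boundary-hugging curve, observe that the nonzero crossing angles are exactly the $\theta_j$'s (with zeros inside each wire), and use the no-proper-subset-sums-to-$0 \bmod \pi$ hypothesis to force every wire to contribute an even number of active creases. Your parity bookkeeping via $s_i = a_1+\cdots+a_i$ and the alternating union of intervals is in fact a more explicit rendering of the paper's assertion that all nonzero $\alpha_i$ must occupy positions of a single parity.
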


%\Tom{I actually do not believe this Proposition.  I think it is too general.}

%\Tom{But all of our gadgets can be shown to require all the wires having exactly one valley crease activated. But the argument is only general when there is one output wire, where a 2-coloring along the boundary argument suffices. For the Intersectors, the reason why their wires work is because the angles between the wires are $\pi/3, 2\pi/3, \pi/3, 2\pi/3$, and having the two output wires with some combination of zero or two active valleys would break Justin's Theorem (but a small case-by-case argument needs to be made, I think, to cover the possible values of $M-V$ along the boundary). Anyway, I think Proposition 6 needs to be reworked to, maybe, handle just the single-output-wire case.  Then maybe the Intersector gadget case should just be handled in Section 3.4.}

\begin{proof}
  The angle between consecutive creases in a wire is $0$.  In addition, each
  wire contains at least one active crease (the mountain crease).  Applying
  Justin's Theorem to our sub-crease pattern $G'$ with $\gamma$ the gadget's boundary and $\alpha_1,\ldots,\alpha_n$ the angles in counterclockwise order between the creases in $G'$ that $\gamma$ crosses,%(\cite{Justin97},\cite[Chapter 6.4]{origametry}) 
 we have $\alpha_1+\alpha_3+\cdots + \alpha_{n-1}\equiv \alpha_2 + \alpha_4 + \cdots + \alpha_n \equiv  m\pi \pmod{2\pi}$ for some constant $m$. Now, each $\alpha_i$ equals some $\theta_j$ or is zero, so our assumption  that no proper subset of the $\theta_j$'s adds up to a multiple of $\pi$ must also be true for the non-zero $\alpha_i$'s. Therefore, the non-zero $\alpha_i$'s all have either $i$ being odd or all have $i$ being even. But if both (or neither) of the valley folds in a wire of $G$ were active in $G'$, then the angles $\theta_j$ and $\theta_{j+1}$ surrounding the wire would appear among the $\alpha_i$ angles with one having an odd $i$ and one an even $i$, which is a contradiction. Thus every wire in $G'$ contains exactly one active valley fold.
\end{proof}

As a corollary we get:
\begin{corollary} \label{cor:onevalley_int}
  Let $G$ be a gadget with two input wires and two output wires, with
  consecutive angles between wires adding up to $2\pi$.  If we know that the
  input wires each contain exactly one active valley fold, the output wires must
  each contain exactly one active valley fold.
\end{corollary}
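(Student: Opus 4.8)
The plan is to run the same kind of argument as in Proposition~\ref{prop:onevalley} --- applying Justin's Theorem to the gadget's boundary $\gamma$ --- but now extracting the conclusion from the two given input wires rather than from a hypothesis on the angles $\theta_i$. Let $G'\subseteq G$ be the flat-foldable sub-crease pattern determined by a choice of optional creases in which each input wire has exactly one active valley. Traversing $\gamma$ counter-clockwise and listing the active creases it crosses, consecutive active creases inside a single wire make angle $0$, and between consecutive wires one sees the angles $\theta_1,\dots,\theta_4$, each occurring exactly once (every wire contributes at least its mandatory mountain). So in Justin's notation the nonzero $\alpha_i$ are precisely the four numbers $\theta_1,\dots,\theta_4$, and the whole argument comes down to tracking the parities of the positions at which they sit.

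First I would do the parity bookkeeping. A wire with $v$ active valleys contributes $v+1$ active creases, so passing it shifts the running index by $v+1$; in particular each input wire ($v=1$) shifts the index by an even amount and hence preserves parity, while the parity shift across an output wire with $v$ valleys is the parity of $v+1$. Next, because $G'$ is flat-foldable its faces are $2$-colorable, so $\gamma$ crosses an even number of active creases; since the two input wires account for $2+2=4$ of these, the two output wires together contribute an even number of active creases, i.e. $v_{\mathrm{out},1}+v_{\mathrm{out},2}$ is even. Finally, the alternating-sum half of Justin's Theorem gives $\sum_{\theta_j\text{ at an odd position}}\theta_j\equiv 0\pmod{\pi}$; since the $\theta_j$ are positive and sum to $2\pi$, a proper nonempty sub-collection of them can be $\equiv 0\pmod{\pi}$ only if it sums to exactly $\pi$. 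Combining this with the parity bookkeeping (and splitting into the cases according to whether the two input wires are consecutive around $\gamma$ or not), the set of $\theta_j$ at odd positions is forced to be empty or all of $\{\theta_1,\dots,\theta_4\}$: the intermediate possibilities either contradict the even-crossing count from $2$-colorability or would force a proper sub-collection of the $\theta_j$ to sum to exactly $\pi$. In either surviving case the parity shift across each output wire is even, so each output wire has an odd number of active valleys, hence exactly one.

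The main obstacle is precisely the step where a proper sub-collection of the $\theta_j$ could in principle sum to $\pi$ --- a single inter-wire angle equal to $\pi$, or two adjacent ones summing to $\pi$. This is the only place where the weaker hypothesis here (only the total $\theta_1+\cdots+\theta_4=2\pi$) versus Proposition~\ref{prop:onevalley} (no proper subset a multiple of $\pi$) could bite, and it corresponds to a genuinely degenerate boundary configuration that does not occur for the gadgets built in Section~\ref{sec:gates} (relatedly, all wires are directed ``downward,'' so no two wire directions are anti-parallel). So the bulk of the write-up will be a short, careful enumeration of the few parity patterns compatible with $v_{\mathrm{in},1}=v_{\mathrm{in},2}=1$ and the even-crossing count, verifying that the only ones not demanding such an angle coincidence are exactly those with $v_{\mathrm{out},1}=v_{\mathrm{out},2}=1$.
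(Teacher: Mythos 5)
The paper supplies no proof of this corollary---it is presented as an immediate consequence of Proposition~\ref{prop:onevalley}---so your reconstruction is essentially the intended argument, and it is correct: you re-run the Justin's-Theorem parity bookkeeping on the gadget boundary, substituting the hypothesis that each input wire carries exactly two active creases (so that crossing it preserves index parity) for the Proposition's subset-sum condition on the $\theta_j$. Your three ingredients---parity preservation across inputs, evenness of the total crossing number, and $\sum_{\text{odd positions}}\theta_j\equiv 0\pmod{\pi}$ combined with $\sum_j\theta_j=2\pi$---do close the argument, provided the one sub-collection of the $\theta_j$ that can actually land in the odd positions does not sum to $\pi$. The caveat you flag is genuine and worth making explicit: as literally stated the corollary drops the non-degeneracy hypothesis of Proposition~\ref{prop:onevalley}, and the parity argument needs that the single angle between the two output wires (when the inputs are cyclically adjacent) or the pair of angles flanking an input wire (when inputs and outputs are interleaved) is not congruent to $0$ modulo $\pi$. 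This is not a vacuous worry for the paper's own use of the corollary: the intersectors, where Corollary~\ref{cor:onevalley_int} is invoked, have boundary angles $\pi/3,2\pi/3,\pi/3,2\pi/3$, so some adjacent pairs \emph{do} sum to $\pi$; the argument survives only because the particular angle that must avoid $\pi$ there---the one between the two output wires---equals $2\pi/3$. Your planned enumeration of parity patterns should record exactly which angle (or pair) carries that burden in each cyclic arrangement of the inputs.
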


%In our analyses of gadgets we will repeatedly use the following two theorems:
%\begin{theorem}[Maekawa's Theorem {\cite[Maekawa's Theorem, p. 81]{origametry}}]
%  If a crease pattern flat-folds around a vertex, the difference between the
%  number of mountain folds and the number of valley folds meeting at that vertex
%  must be $2$.
%\end{theorem}
%
%\begin{theorem}[Kawasaki's Theorem {\cite[Theorem 5.37]{origametry}}]
%  If a crease pattern flat-folds around a vertex, the sum of alternating angles
%  between the creases meeting at the vertex must be $\pi$.
%\end{theorem}

\section{Logic gates and other gadgets}\label{sec:gates}

In this section we show how to construct logical gates (AND, OR, NAND, NOR, NOT) as well as intersector, twist, and eater gadgets via origami with optional creases. These will form the building blocks of our Rule 110 flat origami simulation. Our over-all scheme is to build our crease pattern on a triangle lattice, and so most of our gadgets will possess triangle or hexagonal symmetry. We also include gadgets for some logic gates (AND and NOT) merely for completeness, as they are not used in the final construction.

For the two-input logic gates we assume that we
are given two wires at an angle of $2\pi/3$ with information coming ``in'' to
the gate from the positive $y$-direction.  The output is a crease at an angle of $2\pi/3$ with each of the
inputs.  The NOT gate (Section~\ref{sec:not}) is somewhat strange, as it
requires an ``auxilliary'' pleat which is not affected by the gate.  In
addition, in Section~\ref{sec:intersect}, we show that it is possible to
``intersect'' two wires which meet at an angle of $\pi/3$ without affecting
their values.  Sections~\ref{sec:twist} and \ref{sec:eat} contain the twist and eater gadgets. %some basic elements which we will use later in the construction of Rule 110.

In the below Propositions, we say that a logic gadget \textit{works} if the values of the input wires force the correct output wire value in accordance to the desired logic gate.

\subsection{NOR and NAND}

\begin{figure}[h]
  \centering
  \begin{minipage}{.5\textwidth}
    \centering
\begin{tikzpicture}
% first draw the bidirectional twist
\draw[mountain] (30:1) -- (30:3.5)
                (150:1) -- (150:3.5)
                (0,-1) -- (0,-3.5)
                (30:1) -- (150:1) -- (0,-1) -- cycle;
\draw[valley_opt] (30:1) -- +(0,-4)
                  (30:1) -- +(150:4)
                  (150:1) -- +(30:4)
                  (150:1) -- +(0,-4)
                  (0,-1) -- +(30:4)
                  (0,-1) -- +(150:4);
% then add control folds 
                  
\draw[valley_opt] (0,0) -- (30:1)
                  (0,0) -- (150:1)
                  (0,0) -- (0,-1)
                  (60:{sqrt(3)/3}) -- (0,1)
                  (0,0) -- (120:{sqrt(3)/3})
                  (0,0) -- (-120:{sqrt(3)/3})
                  (-{sqrt(3)/3},0) -- (-150:1);
\draw[mountain_opt] (120:{sqrt(3)/3}) -- (0,1)
                    (0,0) -- (60:{sqrt(3)/3})
                    (0,0) -- ({-sqrt(3)/3},0)
                    (-150:1) -- (-120:{sqrt(3)/3})
                    ;

%input/output labels
\path (0,0.5) +(150:3) node[rotate=-30] (Fin1) {FALSE};
\draw[->] (Fin1) -- +(-30:1);
\path (0,-0.5) +(150:3.5) node[rotate=-30] (Tin1) {TRUE};
\draw[->] (Tin1) -- +(-30:1);
\path (0,-0.5) +(30:3.5) node[rotate=30] (Fin2) {FALSE};
\draw[->] (Fin2) -- +(-150:1);
\path (0,0.5) +(30:3) node[rotate=30] (Fin2) {TRUE};
\draw[->] (Fin2) -- +(-150:1);
\path (-30:0.5) -- +(0,-3) node[rotate=-90] (Fout) {FALSE};
\path (-150:0.5) -- +(0,-3) node[rotate=-90] (Tout) {TRUE};
\draw[->] (Fout) -- +(0,-1);
\draw[->] (Tout) -- +(0,-1);

% important points
\node [circle, fill=black, draw, label=below right:{$\scriptscriptstyle X$}, inner sep=0.13em] at
(-30:1) {};
\node [circle, fill=black, draw, label={[label distance=-4pt]below
  right:{$\scriptscriptstyle  O$}}, inner sep=0.13em] at (0,0) {};
\node [circle, fill=black, draw, label={above:{$\scriptscriptstyle  Y$}}, inner sep=0.13em] at (0,1)
{};
\node [circle, fill=black, draw, label={below left:{$\scriptscriptstyle  Z$}},
inner sep=0.13em] at (-150:1) {};
\node [circle, fill=black, draw, label={above:{$\scriptscriptstyle  Y'$}}, inner sep=0.13em] at (150:1) {};
\node [circle, fill=black, draw, label={above:{$\scriptscriptstyle  X'$}}, inner
sep=0.13em] at (30:1) {};
%\node [circle, fill=black, draw, label={-90:{$\scriptscriptstyle  A'$}}, inner sep=0.13em] at (120:{sqrt(3)/3}) {};
\node [circle, fill=black, draw, inner sep=0.13em] at (120:{sqrt(3)/3}) {};
%\node at (-.08,.65,0) {$\scriptscriptstyle  A'$};
%[circle, fill=black, draw, label={-90:{$\scriptscriptstyle  A'$}}, inner sep=0.13em] at (120:{sqrt(3)/3}) {};
%\node [circle, fill=black, draw, label={180:{$\scriptscriptstyle  A$}}, inner sep=0.13em] at ({-sqrt(3)/3},0) {};
\node [circle, fill=black, draw, inner sep=0.13em] at ({-sqrt(3)/3},0) {};
%\node at ({-sqrt(3)/3-0.2},0) {$\scriptscriptstyle  A$};
%[circle, fill=black, draw, label={180:{$\scriptscriptstyle  A$}}, inner sep=0.13em] at ({-sqrt(3)/3},0) {};

\end{tikzpicture}
\captionof{figure}{NOR gate}
\label{fig:altnor}
\end{minipage}%
\begin{minipage}{.5\textwidth}
  \centering
\begin{tikzpicture}[scale=0.85]
% first draw the bidirectional twist
\draw[mountain] (30:1) -- (30:3.5)
                (150:1) -- (150:3.5)
                (0,-1) -- (0,-3.5)
                (30:1) -- (150:1) -- (0,-1) -- cycle;
\draw[valley_opt] (30:1) -- +(0,-4)
                  (30:1) -- +(150:4)
                  (150:1) -- +(30:4)
                  (150:1) -- +(0,-4)
                  (0,-1) -- +(30:4)
                  (0,-1) -- +(150:4);

% then add control folds
\draw[valley_opt] (0,0) -- (30:1)
                  (0,0) -- (150:1)
                  (0,0) -- (0,-1)
                  (120:{sqrt(3)/3}) -- (0,1)
                  (0,0) -- (60:{sqrt(3)/3})
                  (0,0) -- (-60:{sqrt(3)/3})
                  ({sqrt(3)/3},0) -- (-30:1);
\draw[mountain_opt] (60:{sqrt(3)/3}) -- (0,1)
                    (0,0) -- (120:{sqrt(3)/3})
                    (0,0) -- ({sqrt(3)/3},0)
                    (-30:1) -- (-60:{sqrt(3)/3})
                    ;

% input/output labels
\path (0,0.5) +(150:3) node[rotate=-30] (Fin1) {FALSE};
\draw[->] (Fin1) -- +(-30:1);
\path (0,-0.5) +(150:3.5) node[rotate=-30] (Tin1) {TRUE};
\draw[->] (Tin1) -- +(-30:1);
\path (0,-0.5) +(30:3.5) node[rotate=30] (Fin2) {FALSE};
\draw[->] (Fin2) -- +(-150:1);
\path (0,0.5) +(30:3) node[rotate=30] (Fin2) {TRUE};
\draw[->] (Fin2) -- +(-150:1);
\path (-30:0.5) -- +(0,-3) node[rotate=-90] (Tout) {FALSE};
\path (-150:0.5) -- +(0,-3) node[rotate=-90] (Fout) {TRUE};
\draw[->] (Fout) -- +(0,-1);
\draw[->] (Tout) -- +(0,-1);

\end{tikzpicture}
\captionof{figure}{NAND gate}
\label{fig:altnand}
\end{minipage}
\end{figure}

\begin{proposition} \label{prop:nor}
  The NOR gate (Figure~\ref{fig:altnor}) works.
\end{proposition}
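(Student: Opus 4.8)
The plan is to use the paper's standing convention: the crease pattern of Figure~\ref{fig:altnor} is checked by direct folding to flat-fold with each of the four correct input/output combinations, so the only thing that needs proof is that no \emph{other} flat-folding exists -- i.e., that one cannot fix the two inputs to prescribed Boolean values and flat-fold the gadget with the opposite output value. I would therefore structure the proof as a four-case analysis, one case for each input pair $(x,y)\in\{\mathrm{FALSE},\mathrm{TRUE}\}^2$, each time ruling out the single ``wrong'' output value.

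First I would record the reduction that makes this suffice. The NOR gadget has three boundary wires (two inputs, one output) meeting at pairwise angles $2\pi/3$; since no proper nonempty subset of these angles is congruent to $0$ modulo $\pi$, Proposition~\ref{prop:onevalley} applies, so in any flat-foldable sub-crease pattern every wire -- in particular the output wire -- has exactly one active valley fold and hence a well-defined Boolean value. Thus ``the gadget works'' is equivalent to: for each of the four input assignments, the sub-crease pattern realizing those inputs together with the incorrect output is not flat-foldable.

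The core of the argument is then a local propagation of mountain/valley constraints through the interior vertices of the gadget: the central vertex $O$, the corners $X'$, $Y'$, $(0,-1)$ of the central mountain triangle, and the auxiliary vertices near $(60{:}\tfrac{\sqrt3}{3})$ and $(-\tfrac{\sqrt3}{3},0)$. Fixing which optional valley of each input wire is active, I would apply Kawasaki's Theorem (Theorem~\ref{thm:Kawasaki}) and Maekawa's Theorem at these vertices to determine which optional creases of the internal web can be active, and push those forced choices down to the output vertex $(0,-1)$, where they pin down which output valley is active. For the incorrect output, the propagation should terminate in an interior vertex whose active creases violate the alternating angle sum of Kawasaki or the mountain-minus-valley count of Maekawa; in any case where those two theorems leave an ambiguity, I would instead exhibit a layer-ordering obstruction -- a taco--taco or taco--tortilla violation, read off from the superposition net $S_f$ -- among the overlapping flaps produced by the wires and the internal web.

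The step I expect to be the main obstacle is precisely this interior-vertex bookkeeping: several optional creases in the gadget belong to no wire, so they carry no a priori mountain/valley label, and each interior vertex must be checked against \emph{all} of its MV-consistent activations rather than a single prescribed one. Keeping this analysis honest -- in particular, deciding at the central vertex $O$ when Kawasaki and Maekawa already force the outcome and when one must descend to the layer-ordering conditions -- is where the real work lies. Once the NOR case is done, the remaining gate propositions (NAND, OR, AND, NOT) follow by the same template, with the left/right (TRUE/FALSE) roles and the internal web adjusted to match each figure.
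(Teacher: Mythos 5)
Your framing is exactly the paper's: by the stated convention only the impossibility of wrong foldings needs proof, the burden reduces to a case analysis over input assignments, and the tools are Kawasaki's and Maekawa's Theorems propagated through the interior vertices, with layer-ordering obstructions held in reserve for vertices those theorems cannot settle. Your invocation of Proposition~\ref{prop:onevalley} for the three boundary wires at mutual angle $2\pi/3$ is also correct. So the approach matches; the problem is that you have written the plan and not the proof. The entire mathematical content of this proposition \emph{is} the interior-vertex bookkeeping you defer as ``where the real work lies,'' and until it is carried out there is no argument to check.

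The concrete missing ingredient is the specific obstruction the paper isolates in Figure~\ref{eq:impossible}: a degree-4 vertex with three valleys and one mountain whose sector angles are $\pi/3$, $\pi/6$, $2\pi/3$, $5\pi/6$. This vertex satisfies both Kawasaki and Maekawa, yet cannot fold flat because the $\pi/6$ sector is strictly narrower than both of its neighbors, so the flaps on either side of it must collide. The paper's argument leans on this single observation twice --- once to show that in the FALSE/FALSE case no crease through $O$ can be active, and once in the mixed case to decide which of $OY'$, $OX'$ is active --- and without it Kawasaki and Maekawa alone leave the central vertex $O$ underdetermined and the output value unforced. You correctly anticipate that some layer-ordering obstruction will be needed, but promising ``a taco--taco or taco--tortilla violation read off from the s-net'' in the abstract is not the same as identifying the configuration that actually arises and proving it impossible. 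To complete the proof you would need to (i) exhibit this vertex, (ii) prove it cannot flat-fold, and (iii) actually run the propagation in each of the three essentially distinct input cases (the paper collapses ``right input TRUE'' into one case via the vertex $X$ alone) down to the output vertex.
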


\begin{proof}
  First, suppose that the upper-right input is TRUE.  Thus the crease at $\pi/6$
  from $X$ is active, and therefore so is the crease at $-5\pi/6$ from $X$, since only one of the optional valley creases through $X$ may be used by Kawasaki's Theorem.
  That is, the crease at $-\pi/2$ cannot be active, and thus the output is FALSE.
  
    \begin{figure} 
    \begin{tikzpicture}[scale=2]
      \node [circle, fill=black, draw, inner sep=0.15em] at (0,0) {};
      \draw[valley] (0,0) -- (0,-.8)
      (0,0) -- (150:1)
      (0,0) -- (120:1);
      \draw[mountain] (0,0) -- (60:1);

      \node at (90:0.5) {$\frac{\pi}{3}$};
      \node at (135:0.7) {$\frac{\pi}{6}$};
      \node at (-150:0.5) {$\frac{2\pi}{3}$};
      \node[align=center] at (-15:0.5) {$\frac{5\pi}{6}$};
    \end{tikzpicture}
    \caption{An impossible MV assignment in a NOR gate}\label{eq:impossible}
  \end{figure}
  
  Before considering the other two cases, a basic observation.  Consider the
  following crease pattern around a point shown in Figure~\ref{eq:impossible}.
  This will not fold flat, since the wedge which is $\pi/6$ wide is too narrow:
  the wedges on either side of it will collide if we try to flat-fold it.  Thus
  such a configuration is impossible. 
  
  Now suppose that both inputs are FALSE.  By Kawasaki's Theorem around $Y$, the
  fold $YY'$ must be active while none of the others can be (since the fold at
  $\pi/6$ is active and the fold at $5\pi/6$ is not).  By Maekawa's Theorem
  neither of the other two valley folds through $Y'$ can be active.  This means
  that the folds at $\pi/3$, $2\pi/3$ and $5\pi/6$ through $O$ are not active.
  From this it follows that none of the folds through $O$ can be active, since
  otherwise they must all be and then they exactly form the impossible
  configuration in Figure~\ref{eq:impossible}.  Thus the folds at $0$,
  $\pi/3$ and $\pi/2$ through $Z$ are not active, and therefore the fold at
  $-\pi/2$ through $Z$ cannot be active.  Thus the output is TRUE, as desired.

  Lastly, suppose that the left-hand input is TRUE and the right-hand input is
  FALSE.  Then the folds at $\pi/6$ and $5\pi/6$ through $Y$ are active, and
  therefore, for Kawasaki's and Maekawa's Theorems to hold around $Y$, the folds at $-\pi/3$
  and $-2\pi/3$ must be active and $YY'$ and $YX'$ not active. Thus the folds at $\pi/3$ and $2\pi/3$ through
  $O$ are active.  In order for Kawasaki's Theorem to hold around $O$, we must
  have the fold at $-\pi/2$ active, and exactly one of the two folds $OY'$ and
  $OX'$ active.  As the configuration in Figure~\ref{eq:impossible} is
  impossible, the fold $OY'$ cannot be the one that is active, and thus $OX'$
  must be the one that is active.  By Maekawa's Theorem the fold $Y'Z$ must
  therefore be active, and thus also the fold at $-\pi/2$ through $Z$, showing
  that the output is FALSE, as desired.

\end{proof}

\begin{proposition} \label{prop:nand}
  The NAND gate (Figure~\ref{fig:altnand}) works.
\end{proposition}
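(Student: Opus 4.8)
The plan is to obtain the NAND gate from the NOR gate of Proposition~\ref{prop:nor} by a reflection rather than by redoing the case analysis. Comparing Figures~\ref{fig:altnor} and~\ref{fig:altnand} crease by crease shows that Figure~\ref{fig:altnand} is exactly the image of Figure~\ref{fig:altnor} under the reflection $\rho\colon(x,y)\mapsto(-x,y)$: this map swaps the grid directions $\pi/6$ and $5\pi/6$ and fixes the vertical direction, so it preserves the triangular grid; it carries the mandatory triangle and the three boundary wires to themselves, interchanging the two input wires and fixing the output wire; and it carries every optional valley (respectively mountain) of one figure to an optional valley (respectively mountain) of the other. This last point reflects the general fact that reflecting a crease pattern across a line in the plane of the paper sends flat-foldings to flat-foldings and preserves the mountain/valley type of each crease --- a planar reflection does not turn the paper over --- while reversing handedness. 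Hence $\rho$ carries a downward-directed pleat folded to the right of its direction to one folded to the left of its direction, i.e.\ it negates the Boolean value of every wire; and it preserves the conclusion of Proposition~\ref{prop:onevalley}, which applies to both gadgets since their three boundary-wire angles all equal $2\pi/3$ and no proper sub-sum of $\{2\pi/3,2\pi/3,2\pi/3\}$ is a multiple of $\pi$. So in any flat-folding of either gadget every wire still has exactly one active valley.

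Granting this, correctness of the NAND gate is a formal consequence of Proposition~\ref{prop:nor}. Let a flat-folding of the NAND gate be given, with upper-left input $\ell$, upper-right input $r$, and output $o$. Applying $\rho$ yields a flat-folding of the NOR gate whose two inputs are $\neg r$ and $\neg\ell$ and whose output is $\neg o$, so by Proposition~\ref{prop:nor} we must have $\neg o=\mathrm{NOR}(\neg r,\neg\ell)=\neg\bigl((\neg r)\vee(\neg\ell)\bigr)$. De Morgan's law then gives $o=(\neg\ell)\vee(\neg r)=\neg(\ell\wedge r)=\mathrm{NAND}(\ell,r)$, which is exactly what it means for the NAND gate to work. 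The only bookkeeping required is to confirm that the TRUE/FALSE labels drawn on the wires of Figure~\ref{fig:altnand} are the ones this reflection produces, which is immediate from comparing the label placements in the two figures.

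If one prefers a self-contained argument, the three-case analysis that proves Proposition~\ref{prop:nor} can be rerun verbatim on the mirror-image crease pattern: (i) if both inputs are TRUE, propagate the forced creases through the interior vertices using Kawasaki's and Maekawa's Theorems to conclude the output is FALSE; (ii) if the upper-right input is FALSE, conclude the output is TRUE (this case, together with the left--right symmetry of the gadget, also disposes of the two mixed inputs); and (iii) finish the remaining mixed case. As in the NOR proof, at the central vertex one rules out the possibility that all of its incident optional creases are active, since those creases would reproduce the impossible $\pi/6$-wedge of Figure~\ref{eq:impossible}.

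The main obstacle is purely bookkeeping. For the reflection argument it is the routine but fiddly verification that Figure~\ref{fig:altnand} is literally the mirror image of Figure~\ref{fig:altnor} (crease types and TRUE/FALSE labels included), plus keeping the chain of negations straight in the De Morgan step. For the direct argument it is the same difficulty already present in the proof of Proposition~\ref{prop:nor}: tracking, vertex by vertex, precisely which optional creases are forced active and which are forced inactive, and being careful to apply the impossibility lemma to the correct narrow wedge.
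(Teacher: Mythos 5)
Your proposal is correct and follows essentially the same route as the paper: the paper's proof likewise observes that the NAND gate is the vertical-line reflection of the NOR gate, that this reflection negates every wire's Boolean value, and then applies De Morgan's law via Proposition~\ref{prop:nor}. Your version simply spells out the bookkeeping (and offers an optional direct re-derivation) in more detail than the paper does.
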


\begin{proof}
  The NAND gate is a reflection in a vertical line of the NOR gate; since
  reflection is orientation-reversing in the horizontal direction and
  orientation-preserving in the vertical direction, by Proposition~\ref{prop:nor},
  the output of the NAND gate with inputs $A$ and $B$ is
  \[\neg ({\neg A} \scnor {\neg B}) = {\neg A} \scor {\neg B} = A \scnand B,\]
  as desired.
\end{proof}

\subsection{OR and AND}

\begin{figure}
\centering
\begin{minipage}{.5\textwidth}
  \centering

\begin{tikzpicture}
% first draw the bidirectional twist
\draw[mountain] (30:0) -- (30:3.5)
                (150:0) -- (150:3.5)
                (0,0) -- (0,-3.5);
\draw[valley_opt] (30:1) -- +(0,-4)
                  (30:1) -- +(150:4)
                  (150:1) -- +(30:4)
                  (150:1) -- +(0,-4)
                  (0,-1) -- +(30:4)
                  (0,-1) -- +(150:4);
% then add control folds
\draw [valley_opt] (0,0) -- (0,1)
                   (0,0) -- (-30:1)
                   (30:0.5) -- (-30:1)
                   (30:2) -- (-30:1)
                   (0,0) -- (-150:1);
\draw[mountain_opt] (150:1) -- (-150:1)                   
                    (30:1) -- (-30:1)
                    (30:0.5) -- (0,1)
                    (30:2) -- (0,1)
                    (-30:1) -- (0,-1);

%input/output labels
\path (0,0.5) +(150:3) node[rotate=-30] (Fin1) {FALSE};
\draw[->] (Fin1) -- +(-30:1);
\path (0,-0.5) +(150:3.5) node[rotate=-30] (Tin1) {TRUE};
\draw[->] (Tin1) -- +(-30:1);
\path (0,-0.5) +(30:3.5) node[rotate=30] (Fin2) {FALSE};
\draw[->] (Fin2) -- +(-150:1);
\path (0,0.5) +(30:3) node[rotate=30] (Fin2) {TRUE};
\draw[->] (Fin2) -- +(-150:1);
\path (-30:0.5) -- +(0,-3) node[rotate=-90] (Tout) {FALSE};
\path (-150:0.5) -- +(0,-3) node[rotate=-90] (Fout) {TRUE};
\draw[->] (Fout) -- +(0,-1);
\draw[->] (Tout) -- +(0,-1);

\node [circle, fill=black, draw, label={below left:{$\scriptscriptstyle  Z$}},
inner sep=0.13em] at (-150:1) {};
\node [circle, fill=black, draw, label={below right:{$\scriptscriptstyle  X$}}, inner sep=0.13em] at (-30:1) {};
\node [circle, fill=black, draw, label={above:{$\scriptscriptstyle  Y$}},
inner sep=0.13em] at (0,1) {};
\node [circle, fill=black, draw, label={above:{$\scriptscriptstyle  Y'$}},
inner sep=0.13em] at (150:1) {};
\node [circle, fill=black, draw, label={above:{$\scriptscriptstyle  X'$}}, inner
sep=0.13em] at (30:1) {};
\node [circle, fill=black, draw, label={below right:{$\scriptscriptstyle  Z'$}}, inner sep=0.13em] at (0,-1) {};
%\node [circle, fill=black, draw, label={below left:{$\scriptscriptstyle  O$}}, inner sep=0.13em] at (0,0) {};
\node [circle, fill=black, draw, inner sep=0.13em] at (0,0) {};
\node at (-.18,-.25) {$\scriptscriptstyle  O$};
%[circle, fill=black, draw, label={below left:{$\scriptscriptstyle  O$}}, inner sep=0.13em] at (0,0) {};

\end{tikzpicture}
\captionof{figure}{OR gate}
  \label{fig:new-or}
\end{minipage}%
\begin{minipage}{.5\textwidth}
  \centering

\begin{tikzpicture}[scale=0.85]
% first draw the bidirectional twist
\draw[mountain] (30:0) -- (30:3.5)
                (150:0) -- (150:3.5)
                (0,0) -- (0,-3.5);
\draw[valley_opt] (30:1) -- +(0,-4)
                  (30:1) -- +(150:4)
                  (150:1) -- +(30:4)
                  (150:1) -- +(0,-4)
                  (0,-1) -- +(30:4)
                  (0,-1) -- +(150:4);
% then add control folds
\draw [valley_opt] (0,0) -- (0,1)
                   (0,0) -- (-30:1)
                   (150:0.5) -- (-150:1)
                   (150:2) -- (-150:1)
                   (0,0) -- (-150:1);
\draw[mountain_opt] (30:1) -- (-30:1)                   
                    (-150:1) -- (0,-1)
                    (-150:1) -- (150:1)
                    (150:0.5) -- (0,1)
                    (150:2) -- (0,1);

%input/output labels
\path (0,0.5) +(150:3) node[rotate=-30] (Fin1) {FALSE};
\draw[->] (Fin1) -- +(-30:1);
\path (0,-0.5) +(150:3.5) node[rotate=-30] (Tin1) {TRUE};
\draw[->] (Tin1) -- +(-30:1);
\path (0,-0.5) +(30:3.5) node[rotate=30] (Fin2) {FALSE};
\draw[->] (Fin2) -- +(-150:1);
\path (0,0.5) +(30:3) node[rotate=30] (Fin2) {TRUE};
\draw[->] (Fin2) -- +(-150:1);
\path (-30:0.5) -- +(0,-3) node[rotate=-90] (Tout) {FALSE};
\path (-150:0.5) -- +(0,-3) node[rotate=-90] (Fout) {TRUE};
\draw[->] (Fout) -- +(0,-1);
\draw[->] (Tout) -- +(0,-1);

\end{tikzpicture}
\centering
\captionof{figure}{AND gate}
  \label{fig:new-and}

\end{minipage}
\end{figure}

\begin{proposition} \label{prop:new-or}
  The OR gate (Figure~\ref{fig:new-or}) works.
\end{proposition}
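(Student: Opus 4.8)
The plan is to follow the template of the proof of Proposition~\ref{prop:nor}: a case analysis on the two input wire values in which, for each case, we propagate forced crease assignments through the interior vertices of the gadget --- labeled $O$, $X$, $X'$, $Y$, $Y'$, $Z$, $Z'$ in Figure~\ref{fig:new-or} --- using Kawasaki's Theorem, Maekawa's Theorem, and the ``narrow wedge'' obstruction isolated in Figure~\ref{eq:impossible} (a $\pi/6$ sector bounded on both sides by creases of the same type cannot occur in a flat folding). One preliminary remark: the three boundary wires of the OR gadget sit at mutual angles $2\pi/3$, and no nonempty proper subset of $\{2\pi/3,2\pi/3,2\pi/3\}$ sums to a multiple of $\pi$, so by Proposition~\ref{prop:onevalley}, once the input valleys are fixed any flat folding of the gadget already has exactly one active valley in the output wire; the content of the proof is to identify \emph{which} valley, namely that it is the one encoding $\mathrm{OR}$ of the inputs.

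Since $\mathrm{OR}(A,B)$ is TRUE exactly when at least one of $A,B$ is TRUE, it suffices to treat three cases, exactly paralleling the NOR proof: (i) the upper-right input is TRUE, so the output should be TRUE regardless of the other input; (ii) both inputs are FALSE, output FALSE; and (iii) the upper-left input is TRUE while the upper-right input is FALSE, output TRUE. In each case I push the active input valley into the vertex ($Y$ or $X$) where that wire enters the gadget, use the fact that a wire carries exactly one active valley (Proposition~\ref{prop:onevalley}, or directly Kawasaki at that vertex) to force the crease on its far side, and then trace the consequences through the central vertex $O$ --- where the three mandatory mountain wires meet --- down to the output wire. The vertex $O$ has three mandatory mountain creases and three optional valleys around it, all sectors $\pi/3$; so Maekawa forces exactly one of its optional valleys active, and Kawasaki together with the obstruction of Figure~\ref{eq:impossible} pin down which one in each case, and hence the output.

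I expect case (iii) to be the main obstacle, just as it is in the NOR proof: both input wires feed active creases into the neighbourhood of $Y$, forcing a crimp, and one must chase the forced creases carefully to see that the only surviving assignment at $O$ (and at the output vertex) is the one giving output TRUE --- ruling out the competitor precisely because it would realize the impossible configuration of Figure~\ref{eq:impossible}. Case (ii) runs in the reverse direction: from the fact that neither input wire's TRUE-side valley is active, propagate through $Y$, then through $O$ (no optional crease at $O$ can be active, since by Kawasaki they would then all have to be, producing the forbidden wedge), then through $Z$, to conclude the output wire's TRUE-side valley is inactive. The genuine difficulty throughout is bookkeeping: the OR gadget has more short optional creases near its center than NOR, and its three mountain wires meet at a single vertex rather than bounding a small central triangle, so each interior vertex admits several a priori consistent Kawasaki/Maekawa states and the spurious ones have to be killed one at a time with the narrow-wedge obstruction. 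As elsewhere in the paper, I verify only that no folding other than the intended one is possible; that the pattern does fold as intended is to be checked directly, and one should confirm the TRUE/FALSE side conventions for the output wire against Figure~\ref{fig:new-or}.
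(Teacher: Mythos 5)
Your plan is the paper's approach in outline: invoke Proposition~\ref{prop:onevalley} to get exactly one active valley in the output wire, then do a case analysis on the two inputs and propagate forced crease activations vertex by vertex via Kawasaki and Maekawa. The paper's proof does exactly this. But two of your structural predictions do not match how the argument actually has to go, and the proposal defers every concrete deduction, which for a gadget-correctness statement is where the entire proof lives. First, the decision is not made at the central vertex $O$: the output wire's two optional valleys pass through $X$/$X'$ and $Z$/$Y'$, not through $O$, so knowing which single valley at $O$ is active (your Maekawa count there is correct but is not what the paper uses) does not determine the output. The paper's chase runs through $Y$, $Y'$, and $Z$, and the output is read off from whether the crease at $-\pi/2$ through $Z$ is active. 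Second, your case (i) (``upper-right input TRUE, so output TRUE regardless'') does not collapse to a single-vertex argument as the analogous case does for NOR: the reasoning at $Z$ differs according to whether the left input's valley through $Z$ is active, so you must still split on the left input inside that case. The three-case structure is fine, but the hoped-for shortcut is not there.

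Finally, you will not need Figure~\ref{eq:impossible} at all for the OR gate. The only configuration that has to be excluded by something beyond direct Kawasaki propagation is the one where $YY'$ and $YO$ are both active together with the two input valleys through $Y$; that is a degree-four vertex with four valleys and no mountains, which Maekawa kills outright. So the toolkit is slightly smaller than you anticipate, and the ``several a priori consistent states per vertex'' you worry about mostly do not arise: at each of $Y$, $Y'$, $Z$ the active set is forced outright by Kawasaki (collinearity of the two active segments at a degree-two vertex) except for that single Maekawa exclusion. To turn your proposal into a proof you would need to actually carry out this propagation in all four input combinations; as written, nothing has been verified.
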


\begin{proof}
  As before, by Proposition~\ref{prop:onevalley} exactly one of the output
  valley folds must be active.

  First, suppose that the left-hand input is FALSE.  Then the crease at $5\pi/6$
  from $Y$ is not active, so the only creases out of $Y$ that can be active are
  $YY'$ and the crease at $\pi/6$, which are either both active or not.  Thus if
  the right-hand input is TRUE then none of the creases through $Y$ are
  active.  Since none of the creases through $Y$ are active, $YY'$ is not
  active, and thus neither is $Y'Z$.  Thus the crease at $-\pi/2$ through $Z$
  cannot be active, and the output is TRUE.  On the other hand, if the
  right-hand input is FALSE then $YY'$ must be active, and thus therefore so
  must $Y'Z$.  Thus the crease at $-\pi/2$ through $Z$ must be active (since the
  crease at $5\pi/6$ through $Z$ is also active), and the input must be FALSE.

  Now suppose that the left-hand input is TRUE.  Then the crease at $5\pi/6$
  through $Z$ is not active.  Thus $ZO$ and $ZZ'$ cannot be active, and the
  crease at $-\pi/2$ through $Z$ is active if and only if $ZY'$ is active, which is
  active if and only if $YY'$ is active.  Thus to show that the output is TRUE
  it suffices to check that $YY'$ is not active. If the right-hand input is
  TRUE then the crease at $\pi/6$ through $Y$ is not active; by Kawasaki's theorem 
  $YY'$ must not be active in this case, as desired.  If the right-hand input is
  FALSE then the crease at $\pi/6$ through $Y$ is active.  If $YY'$ were active
  then by Kawasaki's theorem $YO$ must be active, and none of the other creases
  through $Y$ can be.  But this violates Maekawa's theorem, since it has $4$
  valley and no mountain creases meeting at a point.  Thus $YY'$ must not be
  active in this case either, as desired.
\end{proof}

\begin{proposition} \label{prop:new-and}
  The AND gate (Figure~\ref{fig:new-and}) works.
\end{proposition}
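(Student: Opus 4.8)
The plan is to deduce Proposition~\ref{prop:new-and} from Proposition~\ref{prop:new-or} by exactly the reflection trick used in Proposition~\ref{prop:nand} to get the NAND gate from the NOR gate. First I would observe that the crease pattern in Figure~\ref{fig:new-and} is literally the mirror image, across a vertical line, of the crease pattern in Figure~\ref{fig:new-or}: under $x\mapsto -x$ every mandatory mountain, every optional valley, and every boundary wire of the OR gate corresponds to one of the AND gate (the up-left and up-right input wires are interchanged and the downward output wire is fixed), with mountains going to mountains and valleys to valleys. This is immediate from the construction but should be checked crease by crease against the two figures.

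Next I would record what a reflection in a vertical line does to the Boolean semantics. Such a reflection is orientation-reversing in the horizontal direction and orientation-preserving in the vertical direction. Because it preserves the vertical direction it preserves the downward orientation of every wire, so input wires of the OR gate correspond to input wires of the AND gate and likewise for the output; and because it preserves flat-foldability and the mountain/valley data, a choice of optional creases makes the reflected pattern flat-foldable exactly when the original choice does. But since it swaps ``folded to the left'' with ``folded to the right,'' it negates the value carried by each wire: a wire reading TRUE in one picture reads FALSE in the mirror picture.

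Putting these together: suppose the AND gate is fed inputs $A$ and $B$. Reflecting in the vertical line turns this into an instance of the OR gate whose two input wires carry $\neg A$ and $\neg B$. By Proposition~\ref{prop:new-or}, applied in the mirrored picture, the output wire of that OR gate is forced to the value $\neg A \vee \neg B$. Reflecting back to the AND gate negates this, so the AND gate's output wire is forced to $\neg(\neg A \vee \neg B) = A \wedge B$ by De Morgan's law, which is exactly what it means for the AND gate to work.

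The only real obstacle is the bookkeeping in the first two steps: confirming that Figure~\ref{fig:new-and} is precisely the vertical mirror of Figure~\ref{fig:new-or}, and confirming that reflection in a vertical line genuinely preserves mountain/valley labels and the flat-foldability of every optional-crease choice (the same point invoked without elaboration in the proof of Proposition~\ref{prop:nand}). I expect this to be routine once stated carefully, after which the gate identity is just the De Morgan computation above. If one wished to avoid the reflection argument entirely, one could instead run a direct four-case analysis on the inputs using Kawasaki's and Maekawa's Theorems around the labeled vertices $O, X, Y, Y', Z, Z'$, exactly parallel to the proof of Proposition~\ref{prop:new-or}, but that is strictly more work.
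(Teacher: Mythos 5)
Your proposal is correct and follows exactly the paper's own argument: the paper proves the AND gate works by observing it is the vertical reflection of the OR gate and invoking the same reflection-plus-De Morgan reasoning used to derive NAND from NOR. Your more detailed spelling-out of why reflection preserves flat-foldability and negates wire values is a faithful elaboration of what the paper leaves implicit.
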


\begin{proof}
  The AND gate is a reflection of the OR gate.  By the same logic as in the
  proof of Proposition~\ref{prop:nand}, since the OR gate works, so does the AND.
\end{proof}

\subsection{NOT gate} \label{sec:not}

This NOT gate is not used in our origami construction of Rule 110, but we include it for completeness.

\begin{figure}[h]
  \centering
  \begin{tikzpicture}
  \draw [mountain] (0,1) -- (0,3) (0,-1) -- (0,-3);
  \draw [mountain] (-3,.5) -- (-.5,.5) (-.5,-.5) -- (-3,-.5) (3,.5) -- (.5,.5) (.5,-.5) -- (3,.-.5);
  \draw [mountain] (0,1) -- (-.5,.5) -- (-.5,-.5) -- (0,-1) -- (.5,-.5) -- (.5,.5) -- cycle;
  \draw [valley] (-3,1) -- (-1,1) (1,1) -- (3,1) (-3,-1) -- (-1,-1) (1,-1) -- (3,-1);
  \draw [valley_opt] (1,3) -- (1,1) (1,-1) -- (1,-3) (-1,3) -- (-1,1) (-1,-1) -- (-1,-3) (-1,1) -- (1,1) (-1,-1) -- (1,-1);
  \draw [valley_opt] (.5,.5) -- (1.5,-.5) (.5,-.5) -- (1.5,.5) (-.5,.5) -- (-1.5,-.5) (-.5,-.5) -- (-1.5,.5);
  \draw [valley_opt] (.5,.5) -- (1,1) (.5,-.5) -- (1,-1) (-.5,.5) -- (-1,1) (-.5,-.5) -- (-1,-1);
  \draw [mountain_opt] (1,1) -- (1.5,.5) (1,-1) -- (1.5,-.5) (-1,1) -- (-1.5,.5) (-1,-1) -- (-1.5,-.5);
  %\draw [opt] (-1,1) -- (1,1);

  %input/output labels
  \node[rotate=-90] at  (0.5,3)  (Fin) {FALSE};
  \draw[->] (Fin) -- +(0,-1);
  \node[rotate=-90] at (-0.5,3) (Tin) {TRUE};
  \draw[->] (Tin) -- +(0,-1);
  \path (-30:0.5) -- +(0,-3) node[rotate=-90] (Tout) {FALSE};
  \path (-150:0.5) -- +(0,-3) node[rotate=-90] (Fout) {TRUE};
  \draw[->] (Fout) -- +(0,-1);
  \draw[->] (Tout) -- +(0,-1);

  % important points
  %\node [circle, fill=black, draw, label=above right:{$\scriptscriptstyle X$}, inner sep=0.13em] at
  %(1,0) {};
    \node [circle, fill=black, draw, label=below right:{$\scriptscriptstyle Y$}, inner sep=0.13em] at
  (0,-1) {};
  %\node [circle, fill=black, draw, label=above left:{$\scriptscriptstyle Z$}, inner sep=0.13em] at
  %(-1,0) {};
    \node [circle, fill=black, draw, label=above right:{$\scriptscriptstyle X$}, inner sep=0.13em] at
  (0,1) {};
  \node [circle, fill=black, draw, label=above right:{$\scriptscriptstyle A$}, inner sep=0.13em] at
  (1,1) {};
  \node [circle, fill=black, draw, label=below right:{$\scriptscriptstyle B$}, inner sep=0.13em] at
  (1,-1) {};
   \node [circle, fill=black, draw, label=below left:{$\scriptscriptstyle C$}, inner sep=0.13em] at
  (-1,-1) {};
   \node [circle, fill=black, draw, label=above left:{$\scriptscriptstyle D$}, inner sep=0.13em] at
  (-1,1) {};
 \node [label=above right:{$\scriptstyle L_1$}] at (1,2) {};
 \node [label=above right:{$\scriptstyle L_2$}] at (2,1) {};  
 \node [label=below right:{$\scriptstyle L_3$}] at (2,-1) {};
 \node [label=above left:{$\scriptstyle L_4$}] at (-1,2) {};

  \end{tikzpicture}
  \caption{NOT gate}
  \label{fig:not2}
\end{figure}

\begin{proposition}
  The NOT gate (Figure~\ref{fig:not2}) works.
\end{proposition}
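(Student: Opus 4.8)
The plan is to follow the strategy of the proofs of Propositions~\ref{prop:nor} and~\ref{prop:new-or}: rather than exhibit a flat folding, rule out every folding whose output disagrees with the negation of the input, propagating forced creases with Kawasaki's and Maekawa's theorems and the ``too-narrow wedge'' obstruction of Figure~\ref{eq:impossible}.

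I would begin with two structural observations. First, the auxiliary pleat is essentially rigid: its creases along $y=\pm1$ outside the strip $|x|\le1$ and along $y=\pm\tfrac12$ are all mandatory, hence active in every flat folding, so the pleat is a frame through which no information passes. Second, the input and output wires each carry exactly one active valley fold. For the input wire this is the standing assumption that makes ``the gate works'' meaningful; for the output wire I would argue as in Proposition~\ref{prop:onevalley} by applying Justin's Theorem to the gadget boundary. That proposition does not apply verbatim here --- consecutive boundary wires meet at angles $\pi/2$, so two of them sum to $\pi$ --- but the auxiliary pleat is crossed an even number of times with $M=V$ along each arm, so it contributes nothing to the relevant parity or to the $\tfrac{M-V}{2}\pi$ term of~\eqref{eq:Justin}, and the same computation forces the output wire to carry exactly one active valley.

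Next I would use symmetry to halve the work. The crease pattern of Figure~\ref{fig:not2}, \emph{together with its mountain/valley labels}, is invariant under the $180^\circ$ rotation $\rho$ about the center of the hexagon, and post-composing a flat folding with $\rho$ (keeping the layer ordering) again gives a flat folding of the labeled pattern. Since $\rho$ swaps the input and output wires and carries the TRUE crease of one to the FALSE crease of the other, a folding with (input, output) $=$ (TRUE, TRUE) exists if and only if one with (FALSE, FALSE) does, so it is enough to rule out the first. For that case assume the wire's left optional valley $L_4$ (along $x=-1$) is active and $L_1$ is not; I would then propagate inward vertex by vertex. At $X=(0,1)$ the three mandatory mountains plus the single permitted optional crease along $y=1$ determine whether the auxiliary line through $X$ bends left (toward $D$) or right (toward $A$); at $D=(-1,1)$, Kawasaki, Maekawa and the rigid valley arriving from the left fix which diagonal optional creases and which optional mountain at $D$ are active; working down through $A$, $B$, $C$ and $Y=(0,-1)$, each further choice is forced, the wrong branches being killed by Maekawa or by producing the configuration of Figure~\ref{eq:impossible}, until the state at $Y$ forces the output crease along $x=+1$ to be active --- that is, output $=$ FALSE. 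As in the other gate proofs, the substance is the verification that \emph{no other} assignment of the optional creases at $X,Y,A,B,C,D$ is consistent.

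The main obstacle I anticipate is this last propagation. Unlike the NOR gate, the optional creases are spread over six vertices rather than funneled through one, so the case tree branches more, and one must verify carefully that the horizontal auxiliary pleat cannot be perturbed at $X$ or $Y$ in a way that would let the wrong output valley be active --- i.e., that the two admissible bends of the auxiliary line at the hexagon are in bijection with the two input values and force the two output values accordingly. The output-wire ``exactly one active valley'' claim is a secondary subtlety, since it needs slightly more careful Justin's-Theorem bookkeeping than Proposition~\ref{prop:onevalley} supplies.
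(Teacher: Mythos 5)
Your plan matches the paper's proof in all essentials: a forced vertex-by-vertex propagation of crease states from the active input valley through $D$, $X$, $A$, $B$, $Y$ using degree parity, Kawasaki, and Maekawa (the narrow-wedge obstruction of Figure~\ref{eq:impossible} turns out not to be needed here), followed by a symmetry of the MV-labeled pattern to dispatch the second input value --- the paper invokes the left--right mirror rather than your $180^\circ$ rotation, but both are genuine label-preserving symmetries and both reduce to a single case. Your extra Justin's-Theorem bookkeeping for the output wire is sound but superfluous, since the propagation itself pins down exactly one active output valley; and your identification of TRUE with the left valley is the reverse of the paper's convention, which is harmless because the substantive claim is only that the forced output valley lies on the opposite side from the input valley.
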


\begin{proof} 
% {\bf NOTE: The below proof does not work with the new version in Figure~\ref{fig:not2}. But it does serve as a proof that twists act logically like we want them to.}
% Note that the vertices $W, X, Y$ and $Z$ can only have four of their five creases used to fold flat, which thus include the three required mountain creases and one of the optional valley creases at each vertex.

% Suppose that the input is TRUE. Then the part of the optional valley $L_1$ above $X$ is used, meaning that the half of $L_1$ below $X$ is not used and the half of $L_2$ to the right of $W$ is not used (since it is blocked by $L_1$ above $X$).  Therefore in order for $W$ to fold flat, it must use the half of $L_2$ to the left of $W$.  Similarly, the half of $L_3$ to the right of $Y$ is blocked, so the right half of $L_3$ is used, and the half of $L_4$ below $Z$ is blocked, so the above half is used.  Thus the crease pattern is a classic square twist \cite[Figure 6.12]{origametry} that twists the inner diamond in the clockwise direction, and the output is FALSE.

% By symmetry, if the input is FALSE then the crease pattern will, again, be a classic square twist that twists the inner diamond in the counterclockwise direction, making the output TRUE.
% \vspace{.1in}

% Actual proof of the new NOT gate:

Suppose the input is TRUE.  Then the crease $L_4$ above the point $D$ is not used, which implies that all of $L_2$ to the left of point $X$ is used and $XA$ is not used (in order to make $X$ flat-foldable).  Thus, since the creases $L_1$ above and $L_2$ to the right of $A$ are both used, we have that both of the short diagonals below $A$ are used.  This implies that only the lower-left-to-upper-right longer diagonal between $A$ and $B$ is used (this diagonal is forced by the short diagonals below $A$; the other diagonal between $A$ and $B$ cannot also be used because we cannot have a degree-4 vertex made of only valley creases).  This implies that the two short diagonals above point $B$ are not used, which means the crease $L_3$ to the right of point $Y$ is used and $L_1$ below $B$ is not used.  Therefore the output is FALSE.  By the left-right mirror symmetry of this crease pattern, if the input is FALSE the output must be TRUE.
\end{proof}

\subsection{Intersector} \label{sec:intersect}

Intersector gadgets will be placed wherever two wires need to cross each other on our sheet of paper. They will ensure that the Boolean signals of the wires will be preserved through the intersection.

\begin{figure}[h]
  \centering
  \begin{minipage}{.5\textwidth}
    \centering
\begin{tikzpicture}

  \draw[mountain] (120:4) -- (-60:3);
  \draw[valley_opt] (120:4)++(1,0) -- +(-60:6);
  \draw[valley_opt] (120:4)++(-120:1) -- +(-60:6);

  \draw[valley_opt] (0,0) -- (3,0);
  \draw[mountain] (60:1) -- +(3,0);
  \draw[valley_opt] (120:2) -- +(4,0);
  
  \draw[mountain] (-1,0) -- (-3.3,0);
  \draw[valley_opt] (-1,0) ++(120:1) -- +(-2,0);
  \draw[valley_opt] (-120:1) -- +(-3,0);

  \draw[mountain_opt] (120:2) -- +(-120:1);
  \draw[mountain_opt] (60:1) -- (120:1);
  \draw[valley_opt] (120:1) -- +(-120:1);
  \draw[mountain_opt] (0,0) -- (-120:1);
  \draw[valley_opt] (120:1) -- +(-1,0);
  \draw[mountain_opt] (0,0) -- (-1,0);
  \draw[mountain_opt] (120:1) -- +(60:1);
  \draw[valley_opt] (0,0) -- (60:1);
  \draw[valley_opt] (-120:1) -- (-60:1);
  \draw[mountain_opt] (-60:1) -- (1,0);
  % input/output labels

  \path (0.5,0) +(120:4)  node[rotate=-60] (Fin1) {FALSE};
  \path (-0.5,0) +(120:3.5)  node[rotate=-60] (Tin1) {TRUE};
  \draw[->] (Fin1) -- +(-60:1);
  \draw[->] (Tin1) -- +(-60:1);
  \path (0.5,0) +(120:-2.5)  node[rotate=-60] (Fout1) {FALSE};
  \path (-0.5,0) +(120:-2.5)  node[rotate=-60] (Tout1) {TRUE};
  \draw[->] (Fout1) -- +(-60:1);
  \draw[->] (Tout1) -- +(-60:1);

  \path (-3,0) +(120:0.5) node (Fin2) {FALSE};
  \path (-3.5,0) +(-60:0.5) node (Tin2) {TRUE};
  \draw[->] (Fin2) -- +(1,0);
  \draw[->] (Tin2) -- +(1,0);
  \path (2,0) +(60:1.5) node (Fout2) {FALSE};
  \path (2.5,0) +(60:0.5) node (Tout2) {TRUE};
  \draw[->] (Fout2) -- +(1,0);
  \draw[->] (Tout2) -- +(1,0);
  
  % important points
  \node [circle, fill=black, draw, label=above right:{$\scriptscriptstyle X$}, inner sep=0.13em] at
  (1,0) {};
  \path (0,0) ++(60:1) ++(120:1) node [circle, fill=black, draw, label=above
  right:{$\scriptscriptstyle Y$}, inner sep=0.13em]   {};
  \path (0,0) ++(120:1) ++(-1,0) node [circle, fill=black, draw, label=below
  left:{$\scriptscriptstyle Z$}, inner sep=0.13em]   {};
  \path (-120:1) node [circle, fill=black, draw, label=below
  left:{$\scriptscriptstyle W$}, inner sep=0.13em]   {};
  \path (-1,0) node [circle, fill=black, draw, label=below
  left:{$\scriptscriptstyle Z'$}, inner sep=0.13em]   {};
  \path (-60:1) node [circle, fill=black, draw, label=right:{$\scriptscriptstyle W'$}, inner sep=0.13em]   {};
  \path (120:2) node [circle, fill=black, draw, label=above
  right:{$\scriptscriptstyle Y'$}, inner sep=0.13em]   {};
  \path (60:1) node [circle, fill=black, draw, label=above
  right:{$\scriptscriptstyle X'$}, inner sep=0.13em]   {};
  \path (120:1) node [circle, fill=black, draw, label=below:{$\scriptscriptstyle A$}, inner sep=0.13em]   {};
  \path (0,0) node [circle, fill=black, draw, label=below:{$\scriptscriptstyle B$}, inner sep=0.13em]   {};
\end{tikzpicture}
\captionof{figure}{$\pi/3$ Intersector}
\label{fig:intersect60}
\end{minipage}%
\begin{minipage}{.5\textwidth}
\begin{tikzpicture}[baseline=0, scale=0.85]
\begin{scope}[rotate=60]
  \draw[mountain] (120:4) -- (-60:3);
  \draw[valley_opt] (120:4)++(1,0) -- +(-60:6);
  \draw[valley_opt] (120:4)++(-120:1) -- +(-60:6);

  \draw[valley_opt] (0,0) -- (3,0);
  \draw[mountain] (60:1) -- +(3,0);
  \draw[valley_opt] (120:2) -- +(4,0);
  
  \draw[mountain] (-1,0) -- (-3.3,0);
  \draw[valley_opt] (-1,0) ++(120:1) -- +(-2,0);
  \draw[valley_opt] (-120:1) -- +(-3,0);

  \draw[mountain_opt] (120:2) -- +(-120:1);
  \draw[mountain_opt] (60:1) -- (120:1);
  \draw[valley_opt] (120:1) -- +(-120:1);
  \draw[mountain_opt] (0,0) -- (-120:1);
  \draw[valley_opt] (120:1) -- +(-1,0);
  \draw[mountain_opt] (0,0) -- (-1,0);
  \draw[mountain_opt] (120:1) -- +(60:1);
  \draw[valley_opt] (0,0) -- (60:1);
  \draw[valley_opt] (-120:1) -- (-60:1);
  \draw[mountain_opt] (-60:1) -- (1,0);
  % input/output labels
  \end{scope}

  \path (-0.5,0) +(60:3.5)  node[rotate=60] (Fin1) {FALSE};
  \path (-1.5,0) +(60:4)  node[rotate=60] (Tin1) {TRUE};
  \draw[->] (Fin1) -- +(-120:1);
  \draw[->] (Tin1) -- +(-120:1);
  \path (0.5,0) +(60:-3)  node[rotate=60] (Fout1) {FALSE};
  \path (-0.5,0) +(60:-3)  node[rotate=60] (Tout1) {TRUE};
  \draw[->] (Fout1) -- +(-120:1);
  \draw[->] (Tout1) -- +(-120:1);

  \path (-3,0) +(120:0.5) node (Fin2) {FALSE};
  \path (-3.5,0) +(-60:0.5) node (Tin2) {TRUE};
  \draw[->] (Fin2) -- +(1,0);
  \draw[->] (Tin2) -- +(1,0);
  \path (2,0) +(60:0.5)  node (Fout2) {FALSE};
  \path (2,0) +(-60:0.5) node (Tout2) {TRUE};
  \draw[->] (Fout2) -- +(1,0);
  \draw[->] (Tout2) -- +(1,0);

\end{tikzpicture}
\captionof{figure}{$2\pi/3$ intersector}
\label{fig:intersect120}
\end{minipage}

\end{figure}

\begin{proposition}
  The intersectors (Figures \ref{fig:intersect60} and \ref{fig:intersect120}) work.
\end{proposition}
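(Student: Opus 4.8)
The plan is to follow the template of the proofs of Propositions~\ref{prop:nor} and~\ref{prop:new-or}. Each intersector is a gadget with two input wires and two output wires whose consecutive boundary angles (all multiples of $\pi/3$) add up to $2\pi$, so, \emph{assuming the two input wires each carry a well-defined Boolean value} — which is guaranteed by whatever gadgets feed the intersector — Corollary~\ref{cor:onevalley_int} already tells us that each output wire contains exactly one active valley fold and hence carries a well-defined value. (Proposition~\ref{prop:onevalley} cannot be invoked directly here, since two of the four boundary angles are $\pi/3$ and $2\pi/3$ and already sum to $\pi$.) Thus the entire content of ``the intersector works'' is to check that the active valley of each output wire ends up on the same side of that wire, relative to its direction, as the active valley of the corresponding input wire — i.e., that both signals are transmitted unchanged and independently.

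To do this I would first record the two local obstructions used throughout Section~\ref{sec:gates}: (a) by Maekawa's Theorem, no vertex can flat-fold with its active creases all mountains or all valleys (in particular, no degree-$4$ all-valley vertex is possible); and (b) a $\pi/3$ sector flanked by strictly wider sectors cannot have its two bounding creases folded in the same direction, since the neighbouring sectors would collide — the triangular-grid version of the impossible configuration in Figure~\ref{eq:impossible}. The heart of the proof is then a four-way case analysis on the Boolean values $(a,b)$ carried by the two inputs. In each case I would begin at the vertices where an input wire's mandatory mountain and active valley first meet the interior creases (e.g.\ $Y'$ for the diagonal wire, and the left-hand vertices $Z,Z',W$ for the horizontal wire), and then propagate the forced ``this crease is active iff that one is'' implications through the interior vertices $A$ and $B$ and on to the output vertices, each step being a single application of Kawasaki's Theorem together with Maekawa's Theorem and facts (a) and (b). The target in every case is that the active valley in output wire~$1$ is forced to be the one corresponding to $a$ and the active valley in output wire~$2$ the one corresponding to $b$.

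The step I expect to be the main obstacle is the interior bookkeeping. Unlike the logic gates, here two signals share the same interior region, the vertices $A$ and $B$ each carry several optional creases, and there is no obvious symmetry relating the two wires (so the cases $(a,b)=(\text{TRUE},\text{FALSE})$ and $(\text{FALSE},\text{TRUE})$ must be handled separately), so the chain of deductions branches more than in the gate proofs. The delicate point is to verify that once the inputs are fixed the active/inactive status of \emph{every} interior crease is forced: there must be no interior optional crease left genuinely free whose activation could corrupt an output. As elsewhere in the paper, it is enough to rule out every incorrect flat-folding; we need not separately exhibit the correct one.

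Finally, the $2\pi/3$ intersector of Figure~\ref{fig:intersect120} is obtained from the $\pi/3$ intersector of Figure~\ref{fig:intersect60} by a rigid rotation by $\pi/3$ (with the two ends of one wire relabelled as input and output), and rotation preserves flat-foldability and all mountain/valley labels while the TRUE/FALSE convention is rotation-invariant; hence the $2\pi/3$ intersector works for exactly the same reasons as the $\pi/3$ intersector, just as the NAND and AND gates were deduced from the NOR and OR gates.
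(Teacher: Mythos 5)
Your overall architecture matches the paper's: invoke Corollary~\ref{cor:onevalley_int} for the output wires (you are right that Proposition~\ref{prop:onevalley} does not apply directly, since $\pi/3+2\pi/3\equiv 0\pmod{\pi}$), then run a four-way case analysis propagating forced activations from the input vertices through $A$ and $B$ to the outputs, and dispose of the $2\pi/3$ intersector by symmetry with the $\pi/3$ one. The first two cases (both inputs FALSE; left TRUE, top FALSE) do go through exactly as you describe, by chaining Kawasaki and Maekawa at successive vertices.

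However, there is a genuine gap in the remaining two cases (left FALSE/top TRUE, and both TRUE), and it is precisely at the point you flag as ``the main obstacle.'' In those cases the local toolbox you propose --- Kawasaki, Maekawa, and the narrow-wedge collision of Figure~\ref{eq:impossible} --- is \emph{not} sufficient to force every interior crease: after all local deductions are exhausted, there remain two competing configurations, each of which satisfies Kawasaki and Maekawa at every vertex and has no offending narrow wedge, and one of which produces the \emph{wrong} output. The paper eliminates the bad configuration with Lemma~\ref{lem:problem}, a genuinely global argument: it builds the superposition net, derives a partial layer ordering from local mountain/valley data together with the six-crease ``hexagonal folder'' analysis of Lemma~\ref{lem:hexfold} (itself proved via the taco-taco property), and exhibits a taco-tortilla violation. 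So the intersector is the one gadget in Section~\ref{sec:gates} whose correctness cannot be certified vertex-by-vertex; your plan, as stated, would stall at exactly this step, and you would need to discover and prove something like Lemma~\ref{lem:problem} to complete it. Your reduction of the $2\pi/3$ intersector to the $\pi/3$ one is fine and is how the paper handles it.
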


\begin{proof}
  It suffices to check the claim for the $\pi/3$ intersector; the analysis for
  the $2\pi/3$ intersector is completely analogous.

  By Corollary~\ref{cor:onevalley_int}, exactly one of the valley
  folds in each of the output wires will be active.  

  First, suppose that both inputs are FALSE.  Then at point $Z$ one of the
  angles between active creases is $\pi$, and thus both creases $ZY'$ and $ZA$
  are not active, and $ZZ'$ is active.  Since $ZY'$ is not active, neither is
  $Y'Y$, and since the crease at $2\pi/3$ from $Y$ is not active either, none of
  the creases through $Y$ are active.  Since $YX'$ is not active, $X'A$ must be
  active; thus crease $AZ'$ must be active.  Since $AZ'$ and $ZZ'$ are active,
  crease $Z'B$ must not be active, and crease $Z'W$ must be active.  Since $Z'W$
  and the crease at $\pi$ through $W$ are active, $WB$ must be active and the
  crease at $-\pi/3$ through $W$ must be active.  Thus both outputs are FALSE,
  as desired.

  Now suppose that the left-hand input is TRUE and the top input is FALSE.  Then
  creases $ZY'$ and $ZZ'$ must also be active, as well as crease $Y'Y$.  Since
  $Y'Y$ is active but the crease at $2\pi/3$ from $Y$ is not, the crease at $0$
  from $Y$ must be, and the right-hand output is TRUE.   Since
  $ZZ'$ is active, $Z'B$ must not be, but all other valley folds through $Z'$
  will be.  Since $Z'W$ is active but the crease at $\pi$ from $W$ is not,
  the crease at $-\pi/3$ at $W$ must be; thus the bottom output is FALSE and the
  gadget folds as claimed.  
  
  Now suppose that the left-hand input is FALSE and the top input is TRUE.  Then
  neither the crease at $2\pi/3$ nor the crease at $\pi$ through $Z$ are active,
  and thus none of the creases through $Z$ are active.  Since $ZA$ is not
  active, crease $AY$ cannot be active either; thus crease $YX'$ is active, but
  the crease at $0$ through $Y$ is not active; therefore, the right-hand output
  is FALSE.  Since $YX'$ is active, $X'A$ cannot be active, and thus $X'B$ and
  $X'X$ must be active.  But $AZ'$ is not active, adn thus $Z'B$ must be active,
  while $Z'W$ must not be.  Considering point $W$ we see that since the crease
  at $\pi$ through $W$ must be active, crease $WW'$ must also be active, and
  therefore so is $W'X$.  Since $X'X$ and $W'X$ are active, the crease at $0$
  through $X$ must also be active.  Now there are two possible configurations
  that seem to work: just the crease at $-\pi/6$ through $X$ active (giving the
  desired output), or else creases $XB$, $BW$, and the crease at $-\pi/6$
  through $W$ active---giving the incorrect output.  However, the latter of
  these is exactly the configuration considered in
  Lemma~\ref{lem:problem}, which shows that it cannot flat-fold.  Thus the only
  possible configuration that flat-folds is the correct one.

  Lastly, suppose that both inputs are TRUE.  Since the crease at $\pi$ through
  $Z$ is active but the one at $2\pi/3$ is not, crease $ZA$ must be active, and
  therefore so must $AY$.  Since crease $AY$ is active, three of the valley
  folds through $Y$ must be active, and one of them must be the crease at $0$
  through $Y$; thus the right-hand output is TRUE.  We claim that $YY'$ cannot
  be active.  Indeed, suppose that it is.  Then $Y'Z$ must also be, and
  therefore so is $ZZ'$.  Then $Z'A$ and $Z'W$ must be active, but $Z'B$ not.
  Since $Z'W$ is active but the crease at $\pi$ through $W$ is not, the crease
  at $-\pi/3$ must be, while $WB$ and $WW'$ (and thus $W'X$) are not.  Since
  $Z'B$ and $WB$ are not active, $BX'$ and $BX$ must not be, either.  Since four
  of the creases at $Y$ are active, crease $YX'$ must not be; thus $X'A$ must be
  active, and $X'X$ is not.  Thus none of the creases at $X$ can be active.
  This is exactly the problematic configuration discussed in
  Lemma~\ref{lem:problem}, which cannot flat-fold.  

  Thus $YY'$ is not active.  Then neither are $Y'Z$ or $ZZ'$.  Since $ZZ'$ is
  not active, $Z'B$ must be active and $Z'A$ and $Z'W$ not active.  Since $Z'B$
  is active, $BX'$ must be active and $AX'$ must not be. Thus both $YX'$ and
  $X'X$ are active.  Since the crease at $0$ through $X$ is not active, the
  crease at $-\pi/3$ must be, and the bottom output is TRUE, as desired.
\end{proof}

\begin{figure}[h]
  \centering
  \begin{tikzpicture}
    \draw[line width=4pt,yellow!60!white] (0,0) -- (2,0) (0,0) -- (120:2) (0,0) -- (-120:2);
    \draw[line width=4pt,orange!40!white] (0,0) -- (60:2) (0,0) -- (-2,0) (0,0) -- (-60:2);
    \draw[mountain] (0,0) -- (2,0) (0,0) -- (60:2) (0,0) -- (120:2) (0,0) --
    (-2,0);
    \draw[valley] (0,0) -- (-60:2) (0,0) -- (-120:2);
    \node at (30:1) {$a$};
    \node at (0,1) {$b$};
    \node at (150:1) {$c$};
    \node at (-150:1) {$d$};
    \node at (0,-1) {$e$};
    \node at (-30:1) {$f$};
  \end{tikzpicture}
  \caption{Hexagonal folder}
  \label{fig:hexfold}
\end{figure}

Write $x < y$ to mean that layer $y$ is above layer $x$. 

\begin{lemma} \label{lem:hexfold}
  The only flat-foldsings of Figure~\ref{fig:hexfold} in which region $e$ is
  upward-facing have layer orderings (from top to bottom) $c>d>a>f>e>b$ and
  $a>f>c>d>e>b$.  
\end{lemma}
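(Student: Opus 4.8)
The plan is to reduce the statement to a finite search over linear orders of $\{a,b,c,d,e,f\}$. First I would record that, since all six sector angles equal $\pi/3$ and all six (mandatory) creases are folded, any flat-folding of Figure~\ref{fig:hexfold} maps each sector isometrically onto one common $\pi/3$-wedge; the six creases then map alternately onto the two bounding rays of this wedge, so the three creases bordering the pairs $\{a,f\},\{b,c\},\{d,e\}$ all land on one ray and the three bordering $\{a,b\},\{c,d\},\{e,f\}$ all land on the other. Requiring region $e$ to be upward-facing forces $a,c,e$ to be the orientation-preserving regions and $b,d,f$ the orientation-reversing ones (they alternate cyclically around the vertex). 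Plugging this into the definition of the mountain–valley assignment, the drawn creases translate into the layer inequalities
\[ a>f,\qquad a>b,\qquad c>b,\qquad c>d,\qquad d>e,\qquad f>e, \]
where e.g.\ the mountain between $a$ and $f$ gives $a>f$ and the valley between $d$ and $e$ gives $d>e$. Thus every flat-folding with $e$ up is a linear extension of this partial order, whose only maximal elements are $a$ and $c$; so the top layer is $a$ or $c$.

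\textbf{The non-crossing constraints.} Next I would invoke the taco–taco property. The three creases bordering $\{a,f\},\{b,c\},\{d,e\}$ strictly overlap in the folded image, so, drawing the layer order as points on a line and joining each pair by an arc, these three arcs must be pairwise non-crossing; likewise the three arcs joining $\{a,b\},\{c,d\},\{e,f\}$ must be pairwise non-crossing. The taco–tortilla and tortilla–tortilla conditions give nothing new, since no sector's interior overlaps a crease and the layer order between two full sectors is automatically well-defined. I would also point out the symmetry $a\leftrightarrow c$, $f\leftrightarrow d$, $b\mapsto b$, $e\mapsto e$, which preserves the six inequalities above and interchanges the two non-crossing conditions; this reduces the problem to the single case ``top layer $=c$,'' the case ``top layer $=a$'' following by applying the symmetry.

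\textbf{The search with $c$ on top.} With $c$ at the top position, the non-crossing requirement for the arc $b$–$c$ forces the endpoints of each of the arcs $a$–$f$ and $d$–$e$ to lie on the same side of $b$; ``above the top'' being impossible and $a>b$ ruling out $\{a,f\}$ below $b$, this leaves exactly two sub-cases according to whether $\{d,e\}$ lies entirely above or entirely below $b$. In the ``above'' sub-case $b$ is at the bottom and $\{a,f,d,e\}$ occupy the four middle positions, so the arcs $a$–$f$ and $d$–$e$ split $\{2,3,4,5\}$ into two pairs; the non-crossing partitions, together with $a>f$ and $d>e$, give four candidate orders, and testing the second arc family leaves only $c>d>a>f>e>b$. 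In the ``below'' sub-case the inequalities pin the order down to $c>a>f>b>d>e$, which fails the second non-crossing condition. Applying the symmetry, ``top layer $=a$'' yields exactly $a>f>c>d>e>b$, proving these are the only two. The main (and only real) obstacle is the bookkeeping of this last step — verifying that ``every region except possibly $d,e$ must lie above $b$'' is airtight and that no non-crossing arrangement is missed; everything before it is routine once the folded picture and the two matchings are in hand. That the two orderings are actually realized can be checked directly by folding, in keeping with the paper's convention of only excluding spurious foldings.
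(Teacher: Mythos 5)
Your proof is correct and follows essentially the same strategy as the paper's: extract the partial order $a>f$, $a>b$, $c>b$, $c>d$, $d>e$, $f>e$ from the mountain--valley assignment, use the reflection symmetry to fix which of $a,c$ is the top layer, and eliminate all but one linear extension via the taco-taco property applied to the two triples of coincident creases. The only difference is presentational --- you enumerate non-crossing matchings of positions, whereas the paper interleaves the chains $c>d>e$ and $a>f>e$ in stages and then inserts $b$ --- and your case analysis checks out.
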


\begin{proof}
  Since the crease pattern is symmetric with respect to a vertical reflection,
  we can assume without loss of generality that $c > a$.  Now, since $e$ is upward-facing, so is $c$ and $a$, and thus the MV assignment in Figure~\ref{fig:hexfold} implies that 
 any flat-folding of this vertex must have $c > d > e$, $a > f> e$, and $c,a > b$; since we assumed
  $c > a$, this implies that $c > a > b$.  In order to avoid self-intersections,
  we note that after folding all of the orange creases will be lined up, and the
  yellow creases will be lined up.  To figure out possible orderings, we build
  up the layer orderings in stages.

  The first stage is that $c > d > e$ coming from the mountain-valley pattern.
  To build the second stage, consider the possibilities for how $a > f > e$ can
  interleave with this ordering.  Suborderings of the form $a > d > f > e$ and
  $c > f > d > e$ are not allowed, by the taco-taco property (see Section~\ref{ssec:back})
  %\cite[Definition 6.12, Proposition 6.13]{origametry} 
  around the yellow and orange edges,
  respectively. Thus $f$ cannot go between $c$ and $d$; since $a > f$, we must
  therefore have $c > f$, and thus the only possible ordering is
  $c > d > a > f > e$.  For the third stage, consider where $b$ can be inserted
  in this ordering.  Since $a > b$ the only possibilities are
  \[c > d > a > b > f > e \qquad c > d > a > f > b > e \qquad c > d > a > f > e
    > b.\]
  The first of these is forbidden by the taco-taco property around the yellow
  edges; the second is forbidden by the taco-taco property around the orange
  edges.  Thus the only possibility is the last ordering (which is possible by
  first folding the top half behind the bottom half, then folding the right half
  down, and then the left).  
\end{proof}

\begin{figure}
  \begin{minipage}{0.5\textwidth}
\begin{tikzpicture}

  \draw[mountain] (120:4) -- (-60:2);
  \draw[valley] (120:4)++(1,0) -- +(-60:2);
  \draw[valley] (120:2)++(-120:1) -- +(-60:3);

  \draw[mountain] (60:1) -- +(1.5,0);
  \draw[valley] (120:2) -- +(3,0);
  
  \draw[mountain] (-1,0) -- (-3.3,0);
  \draw[valley] (-1,0) ++(120:1) -- +(-2,0);

  \draw[mountain] (120:2) -- +(-120:1);
  \draw[mountain] (60:1) -- (120:1);
  \draw[valley] (120:1) -- +(-120:1);
  \draw[valley] (120:1) -- +(-1,0);
  \draw[mountain] (120:1) -- +(60:1);

%  \path (0.5,0) +(120:4)  node[rotate=-60] (Fin1) {FALSE};
%  \path (-0.5,0) +(120:3.5)  node[rotate=-60] (Tin1) {TRUE};
%  \draw[->] (Fin1) -- +(-60:1);
%  \draw[->] (Tin1) -- +(-60:1);
%  \path (0.5,0) +(120:-2)  node[rotate=-60] (Fout1) {FALSE};
%  \path (-0.5,0) +(120:-2.5)  node[rotate=-60] (Tout1) {TRUE};
%  \draw[->] (Fout1) -- +(-60:1);
%  \draw[->] (Tout1) -- +(-60:1);

%  \path (-3,0) +(120:0.5) node (Fin2) {FALSE};
%  \path (-3.5,0) +(-60:0.5) node (Tin2) {TRUE};
%  \draw[->] (Fin2) -- +(1,0);
%  \draw[->] (Tin2) -- +(1,0);
%  \path (2,0) +(60:1.5) node (Fout2) {FALSE};
%  \path (2.5,0) +(60:0.5) node (Tout2) {TRUE};
%  \draw[->] (Fout2) -- +(1,0);
%  \draw[->] (Tout2) -- +(1,0);

\end{tikzpicture}
\centering
\captionof{figure}{Problematic Intersector configuration}
\label{fig:problem}
\end{minipage}%
  \begin{minipage}{0.5\textwidth}
    \begin{tikzpicture}[scale=1.5]

      \clip (120:1) circle (2.5);

      \draw[color=white, fill=red!20!white] (120:1) +(-1,0) -- +(120:1) --
      +(120:3) arc (120:180:3) -- cycle;
      \draw[color=white, fill=red!20!white] (-1,0) +(0,0) -- +(120:1) -- +(60:1) -- cycle;
      \draw[color=white, fill=red!20!white] (120:1) +(0,0) -- +(120:1) -- +(60:1) -- cycle;
      \draw[color=white, fill=red!20!white] (120:1) ++(60:1) +(0,0) -- +(2,0)
      arc (0:120:2) -- cycle;
      \draw[color=white, fill=red!20!white] (120:1) +(0,0) -- +(3,0) arc
      (0:-60:3) -- cycle;
      \draw[color=white, fill=red!20!white] (-1,0) +(0,0) --+(-60:2) arc
      (-60:-180:2) -- cycle;

      \draw[color=green!40!black!30!white, line width=4] (0,0) -- (120:2) (-1,0) --
      +(60:2) (0,0) ++(60:1) -- ++(-2,0) -- ++(60:1) -- ++(1,0) -- ++(-60:1) --
      ++(-120:1) -- ++(-1,0) -- ++(120:1) -- ++(-120:1) -- ++(-60:1) -- ++(60:1)
      -- ++(-1,0)
      (120:2) ++(-120:1) -- ++(120:1) -- ++(1,0) -- ++(60:1) -- ++(-60:1) --
      ++(60:1) -- ++(-1,0);
      \draw[color=blue!40!black!30!white, line width=4] (60:1) -- (120:1) --
      (120:2) -- ++(60:1) -- ++(1,0) (120:1) -- ++(-120:1) -- ++(-1,0) (120:2)
      --++(-1,0);
      % \draw[color=red!40!black!30!white, line width=4] (0,0) -- ++(120:1) --
      % ++(-1,0) --  ++(120:1) (-2,0) -- +(60:1) (-2,0) -- +(-60:1) (120:1) -- ++
      % (60:1) -- ++(120:1);

  \draw[mountain] (120:4) -- (-60:2);
  \draw[valley] (120:4)++(1,0) -- +(-60:2);
  \draw[valley] (120:2)++(-120:1) -- +(-60:3);

  \draw[mountain] (60:1) -- +(1.5,0);
  \draw[valley] (120:2) -- +(3,0);
  
  \draw[mountain] (-1,0) -- (-3.3,0);
  \draw[valley] (-1,0) ++(120:1) -- +(-2,0);

  \draw[mountain] (120:2) -- +(-120:1);
  \draw[mountain] (60:1) -- (120:1);
  \draw[valley] (120:1) -- +(-120:1);
  \draw[valley] (120:1) -- +(-1,0);
  \draw[mountain] (120:1) -- +(60:1);

%  \path (0.5,0) +(120:4)  node[rotate=-60] (Fin1) {FALSE};
%  \path (-0.5,0) +(120:3.5)  node[rotate=-60] (Tin1) {TRUE};
%  \draw[->] (Fin1) -- +(-60:1);
%  \draw[->] (Tin1) -- +(-60:1);
%  \path (0.5,0) +(120:-2)  node[rotate=-60] (Fout1) {FALSE};
%  \path (-0.5,0) +(120:-2.5)  node[rotate=-60] (Tout1) {TRUE};
%  \draw[->] (Fout1) -- +(-60:1);
%  \draw[->] (Tout1) -- +(-60:1);

%  \path (-3,0) +(120:0.5) node (Fin2) {FALSE};
%  \path (-3.5,0) +(-60:0.5) node (Tin2) {TRUE};
%  \draw[->] (Fin2) -- +(1,0);
%  \draw[->] (Tin2) -- +(1,0);
%  \path (2,0) +(60:1.5) node (Fout2) {FALSE};
%  \path (2.5,0) +(60:0.5) node (Tout2) {TRUE};
%  \draw[->] (Fout2) -- +(1,0);
%  \draw[->] (Tout2) -- +(1,0);

%%%  annotations
  \node[inner sep=2] (a) at (0,{sqrt(3)/3}) {$a$};
  \node[inner sep=2] (b) at (0,{2*sqrt(3)/3}) {$b$};
  \path (0,{sqrt(3)/3}) ++(120:1) node[inner sep=2] (c) {$c$};
  \path (0,{2*sqrt(3)/3}) ++(-1,0) node[inner sep=2] (d) {$d$};
  \path (-1,0) +(0,{sqrt(3)/3}) node[inner sep=2] (e) {$e$};
  \path (-1/2,{sqrt(3)/6}) node[inner sep=2] (f) {$f$};
  \path (0,{sqrt(3)/3}) ++(120:1) ++(60:1) node[inner sep=2] (g) {$g$};
  \path (0,{2*sqrt(3)/3}) ++ (120:1) node[inner sep=2] (h) {$h$};
  \path (0,{sqrt(3)/3}) ++(120:1) ++(-1,0) node[inner sep=2] (i) {$i$};
  \path (-1/2,{sqrt(3)/6}) ++(-1,0) node[inner sep=2] (j) {$j$};
  \path (-1,0) ++(0,{sqrt(3)/3}) ++(-120:1) node[inner sep=2] (k) {$k$};

  \draw[->] (d) -- (i);
  \draw[->] (h) to[bend right] (i);
  \draw[->] (d) -- (c);
  \draw[->] (c) -- (h);
  \draw[->] (j) -- (k);
  \draw[->] (f) -- (a);
  \draw[->] (b) -- (a);
  \draw[->] (b) -- (c);
  \draw[->] (k) -- (f);
  \draw[->] (e) -- (j);
  \draw[->] (e) -- (f);
  \draw[->] (i) to[bend right] (j);
  \draw[->] (g) -- (h);
  \draw[->] (g) -- (b);
  \draw[->] (e) -- (d);

  \draw[->,blue] (b) -- (e);
  \draw[->,red] (c) -- (f);
  
\end{tikzpicture}
\centering
\captionof{figure}{Problematic Intersector configuration with annotations}
\label{fig:problemann}
\end{minipage}
\end{figure}

\begin{lemma} \label{lem:problem}
The problematic configuration (Figure \ref{fig:problem}) will not fold flat.
\end{lemma}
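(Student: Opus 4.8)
The plan is a proof by contradiction: suppose the crease pattern of Figure~\ref{fig:problem} has a flat folding with global layer ordering $\lambda$, and derive a violation of the layer-ordering axioms. Every crease in Figure~\ref{fig:problem} is mandatory with a prescribed mountain--valley assignment, so the orientation $2$-colouring of the faces is fixed, and in particular the central degree-$6$ vertex has, up to a rotation that identifies its six surrounding faces with the faces $a,\dots,f$, exactly the local mountain--valley structure of the hexagonal folder of Figure~\ref{fig:hexfold}. The first step is therefore to apply Lemma~\ref{lem:hexfold}: after possibly turning the whole folded object over we may assume the face in the role of $e$ faces up, so the lemma tells us that the six central faces must be stacked, from top to bottom, either as $c>d>a>f>e>b$ or as $a>f>c>d>e>b$.

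The second step is to determine how the rest of the paper folds over this central stack, which is where the superposition net $S_f=f^{-1}(f(X_f))$ from Section~\ref{ssec:back} is used. Computing the folded image identifies the eleven overlapping faces $a,\dots,k$ and the two families of coincident creases drawn in Figure~\ref{fig:problemann}. Along each coincident crease one invokes the taco--taco and taco--tortilla properties (together with antisymmetry and transitivity), just as in the proof of Lemma~\ref{lem:hexfold}, to extract the forced ``above/below'' relations recorded by the arrows in Figure~\ref{fig:problemann} --- in particular a chain through the faces $h,i,j,k$ placing $c$ below $f$, together with relations such as $b<c$, $d<c$, $e<d$, $f<a$, $b<a$, $g<b$, $g<h$, $d<i$, $e<j$, and $b<e$.

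The third step combines these two ingredients. In the ordering $c>d>a>f>e>b$ one has $f<c$, which together with antisymmetry contradicts the relation $c<f$ obtained above (this is the relation highlighted in red in Figure~\ref{fig:problemann}); so that ordering is impossible. Hence the central stack must be $a>f>c>d>e>b$, and then the forced relations determine the relative order of every face except $g$. The final contradiction is a single non-crossing (taco--taco) constraint along one of the coincident creases: with the order just determined, the four faces adjacent to that pair of coincident creases are forced into an alternating (``crossing'') configuration rather than a nested or fully separated one, which the taco--taco property forbids. Thus no flat folding of Figure~\ref{fig:problem} exists.

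I expect the real work to be in the second step rather than the third: correctly computing the overlap structure of the folded configuration and then identifying, crease by crease, which layer-ordering axiom applies and which relation it produces. The folded image is intricate, and a single mis-identified overlap would break the chain of forced relations; once the arrow diagram of Figure~\ref{fig:problemann} is established, the case analysis that finishes the proof is routine.
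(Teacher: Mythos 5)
Your outline is essentially the paper's argument: apply Lemma~\ref{lem:hexfold} to the central hexagon, assemble the forced pairwise relations into the directed graph of Figure~\ref{fig:problemann}, use the chain $c<h<i<j<k<f<a$ to kill the ordering $c>d>a>f>e>b$, and extract a contradiction from the resulting total order. Two corrections, one of which matters. First, the black arrows are not obtained from taco--taco or taco--tortilla conditions along coincident creases; they come from the much more elementary local mountain--valley conditions at shared creases (an upward-facing face lies above its neighbor across a mountain fold and below it across a valley fold), with only the blue arrow $b<e$ supplied by Lemma~\ref{lem:hexfold}. Second, and more substantively, the closing contradiction is not a taco--taco violation. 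The relations force the total order $g<b<e<d<c<h<i<j<k<f<a$, so the face $i$ lies strictly between $b$ and $a$, which are adjacent across a crease; but $i$ carries no crease along the line (the blue s-net line in Figure~\ref{fig:problemann}) that folds onto the crease between $a$ and $b$. That is precisely the taco--tortilla property failing, with $i$ as the tortilla pierced by the $a$--$b$ taco. Your proposed finish --- four crease-adjacent faces forced into an alternating configuration --- is left unidentified, and since the offending face is uncreased at the relevant line, taco--tortilla rather than taco--taco is the axiom that can do the work; as written, the last step of your plan would not go through without being replaced by this observation.
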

\begin{proof}
  This proof will refer to the annotated version, Figure~\ref{fig:problemann}.
  The colored regions in the diagram are upward-facing; the others are
  downward-facing.  The graph on the nodes shows ordering relations enforced on
  the regions in the graph.  (The green and blue lines show a part of the s-net (defined in Section~\ref{ssec:back})
  %\cite[Definition 6.12]{origametry} 
  consisting of the parts of the s-net which
  overlap region $c$ after folding; a flat-folding must give a well-defined
  ordering on these regions.)  The directions in the graph points from a layer
  that must be lower to one that must be higher.  A cycle in the graph thus
  exhibits a contradiction.

  The black edges in the graph are drawn from local mountain-valley conditions:
  an upward-facing layer must be above a neighboring layer if they differ by a
  mountain fold, and below if they differ by a valley fold.  The blue edge in
  the graph follows from Lemma~\ref{lem:hexfold} applied to regions
  $a$,$b$,$c$,$d$,$e$,$f$.  Note that there is a path from $c$ to $a$; thus if
  we wish to avoid cycles, the layer ordering must have $c < a$ Thus by
  Lemma~\ref{lem:hexfold} the ordering on these layers must be
  $a > f> c > d >e > b$, giving the red arrow.

  Analyzing the black graph with this data, we see that we must have
  \[g < b < e < d < c < h < i < j < k < f < a.\] However, at the top edge of $i$
  there is a taco-tortilla  %\cite[Definition 6.12, Proposition 6.13]{origametry}
  condition  which is violated.  Layer $i$ is between layers $a$ and $b$, but
  does not contain a crease line at the blue line.  The blue line is lined up in
  the folding map with the crease between $a$ and $b$, giving the contradiction.
  Thus the crease pattern cannot flat-fold.
\end{proof}

\subsection{Twists} \label{sec:twist}

A \textit{twist fold} is a crease pattern where $n$ pairs of parallel mountain and valley creases (pleats) meet at an $n$-gon such that folding the pleats flat results in the $n$-gon rotating when folded flat. In addition, standard twist folds require the pleats to fold in the same rotational direction, either all clockwise (with the pleats folding mountain-then-valley in the clockwise direction) or all counterclockwise (valley-then-mountain in the clockwise direction). Twist folds  are ubiquitous in origami tessellations; see, e.g., \cite{Gjerde}. 

Triangle and hexagon twists were pioneered by Fujimoto in the 1970s \cite{Fuj}. Such twists with optional valley creases so as to allow the triangles/hexagons to rotate in either the clockwise or counterclockwise direction are shown in Figures~\ref{fig:hextwist} and \ref{fig:tritwist}. 

We will use these triangle and hexagon twists as gadgets to propagate wire signals in various directions, while also negating them. Their forced rotational nature proves the following Proposition; 
they are simple enough to analyze that we omit the details of the proofs.

\begin{figure}[h]
  \centering
  \begin{minipage}{.5\textwidth}
    \centering
\begin{tikzpicture}
  \draw[mountain] (30:1) -- (0,1) -- (150:1) -- (-150:1) -- (0,-1) -- (-30:1) --
  cycle;
  \draw[mountain] (30:1) -- +(30:2) (0,1) -- +(0,2) (150:1) -- +(150:2) (0,-1)
  -- +(0,-2) (-150:1) -- +(-150:2) (0,-1) -- +(0,-2) (-30:1) -- +(-30:2);
  \draw[valley_opt] (30:1) -- +(0,2) (30:1) -- +(-30:2)
                    (0,1) -- +(30:2) (0,1) -- +(150:2)
                    (150:1) -- +(0,2) (150:1) -- +(-150:2)
                    (-150:1) -- +(150:2) (-150:1) -- +(0,-2)
                    (0,-1) -- +(-150:2) (0,-1) -- +(-30:2)
                    (-30:1) -- +(30:2) (-30:1) -- +(0,-2);
\end{tikzpicture}
\captionof{figure}{Hexagonal twist}
\label{fig:hextwist}
\end{minipage}%
\begin{minipage}{.5\textwidth}
\begin{tikzpicture}[rotate=180]
  \draw[mountain] (30:1) -- (150:1) -- (0,-1) -- cycle;
  \draw[mountain] (30:1) -- +(30:2) (150:1) -- +(150:2) (0,-1) -- +(0,-2);
  \draw[valley_opt] (30:1) -- +(150:3) (30:1) -- +(0,-3)
                    (150:1) -- +(30:3) (150:1) -- +(0,-3)
                    (0,-1) -- +(30:3) (0,-1) -- +(150:3);

\end{tikzpicture}
\captionof{figure}{Triangle twist}
\label{fig:tritwist}
\end{minipage}

\end{figure}

\begin{proposition} \label{prop:twists} The pure hexagonal and triangle twists
  (Figures~\ref{fig:hextwist} and \ref{fig:tritwist}) must flat-fold in
  rotationally-symmetric ways.  In particular, given any designated wire in the
  top half of the gadget as the input, both twists negate and duplicate the
  input value in the other, output wires.
\end{proposition}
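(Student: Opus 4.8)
The plan is to show that each of the two pure twists can flat-fold in only finitely many ways, all of which are rotationally symmetric, and then read off the claimed ``negate and duplicate'' behavior from any one of those foldings. The key tool is Proposition~\ref{prop:onevalley} together with Kawasaki's and Maekawa's Theorems applied vertex-by-vertex around the central polygon. First I would check the hypothesis of Proposition~\ref{prop:onevalley}: the twist gadget has $2\pi/3$-angle (triangle) or $\pi/3$-angle (hexagon) sectors between consecutive boundary wires, and no proper nonempty subset of these equal-angle sectors sums to a multiple of $\pi$ (for the triangle the three angles are each $2\pi/3$; for the hexagon the six angles are each $\pi/3$). Hence in any flat-foldable sub-crease-pattern each wire carries exactly one active valley, so each wire has a well-defined Boolean value, and we only need to rule out the ``wrong'' combinations of valley choices.

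Next I would analyze the central polygon. Each vertex of the $n$-gon is the meeting point of one mandatory mountain pleat-crease coming in, the two polygon edges, and the optional valley creases; Kawasaki's Theorem at that vertex forces the two polygon edges together with the pleat creases into the standard twist configuration, and Maekawa's Theorem pins down which of the optional valleys on the two incident wires must be active. Because the mandatory creases already have the rotational symmetry of the polygon, the forced choices propagate around the polygon consistently in exactly one of two ways — a ``clockwise'' twist or a ``counterclockwise'' twist — and any attempt to mix the two directions produces a vertex where Kawasaki or Maekawa fails (a narrow wedge collision of the type in Figure~\ref{eq:impossible}, or a vertex with all valleys and no mountains). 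This is the step I expect to be the main obstacle: one has to verify carefully that, once a direction is chosen at one vertex, every subsequent vertex is forced, with no room for a ``defect'' partway around; for the hexagon there are more sector angles and more cases to walk through than for the triangle, though the mechanics are identical.

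Finally, having established that the only flat-foldings are the two rotationally symmetric twists, I would fix any wire in the top half as the designated input. In each of the two symmetric foldings the rotational symmetry carries the input wire's pleat (hence its choice of active valley, hence its left/right fold direction) to each output wire, but a twist rotates the incident pleats by the polygon's exterior angle, which swaps ``folded to the left'' with ``folded to the right'' relative to the (downward) wire direction — i.e.\ it negates the Boolean value — and does so identically on every output wire, giving the duplication. Thus both twists negate and duplicate the input, as claimed. As the paper notes, the gadgets are small enough that the case analysis, while mechanical, is best left to direct verification by folding; I would state the Proposition's proof at this level of detail and refer the reader to Figures~\ref{fig:hextwist} and \ref{fig:tritwist}.
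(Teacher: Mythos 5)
The paper does not actually supply a proof of this proposition---it declares the twists ``simple enough to analyze that we omit the details''---so you are filling in an argument the authors left to the reader, and your overall skeleton (one active valley per wire, vertex-by-vertex analysis around the central polygon, then the input/output direction reversal for negation) is the natural one. However, there is a concrete error at the first step: the hypothesis of Proposition~\ref{prop:onevalley} \emph{fails} for the hexagonal twist. Its six boundary angles are each $\pi/3$, and the subset consisting of any three of them sums to $\pi\equiv 0\pmod{\pi}$, so you cannot cite that proposition to get ``exactly one active valley per wire'' for the hexagon. (For the triangle twist the angles are $2\pi/3$ with subset sums $2\pi/3$ and $4\pi/3$, so that case is fine.) The conclusion can still be rescued for the hexagon, but it requires going back to Justin's Theorem directly: Maekawa/parity at each hexagon vertex (which otherwise has odd degree) forces exactly one of its two optional valleys to be active, so $M=V=6$ on the boundary and the alternating-angle condition then forces the six valleys to be distributed one per wire. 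As written, your proposal has a gap exactly where you say you expect the hexagon to be ``more cases but identical mechanics''---the mechanics are not identical.

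Second, your stated mechanism for forcing rotational symmetry is not right. A mixed clockwise/counterclockwise choice does \emph{not} produce a vertex where Kawasaki or Maekawa fails, nor the configuration of Figure~\ref{eq:impossible}: one can check that each polygon vertex, with either one of its two optional valleys active, satisfies Kawasaki and Maekawa (e.g.\ a hexagon vertex has sector angles $2\pi/3,\pi/3,\pi/3,2\pi/3$ and $M-V=3-1=2$) and is locally flat-foldable either way. The defect created by mixing directions lives on a \emph{wire}, not a vertex: each vertex activates exactly one valley, each such valley lies on one of the two wires incident to that vertex, and the one-valley-per-wire condition forces this vertex-to-wire assignment to be a perfect matching around the $n$-cycle, of which there are exactly two (all clockwise, all counterclockwise). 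So the argument should be: Maekawa at each vertex gives one valley per vertex; Justin around the boundary (or Proposition~\ref{prop:onevalley}, in the triangle case only) gives one valley per wire; the combination forces global consistency. With that repair, and with the paper's standing convention that the existence of a valid layer ordering for the two symmetric foldings is checked by direct folding, your final step is fine, although the negation is more cleanly explained by noting that all pleats fold in the same rotational sense about the center while the signal direction points toward the center on the input wire and away from it on each output wire, which reverses ``left'' and ``right.''
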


%\Tom{Proposition~\ref{prop:twists} needs clarification. Which wires are the inputs and which the outputs?}

%\inna{I've clarified it a bit; what do you think?}

\subsection{The eater} \label{sec:eat}

%\inna{The original version of this papagraph is currently commented out.  It had a comment about being able to ignore noise wires by ``judicious use of  intersector gadgets'', but I'm really not sure this is true.  Intersector  gadgets shift wires off by 1/2 of a wire width, and actually getting things to  line up properly takes a surprising amount of work.  Also, I don't see any  reason why several cells couldn't have misaligned noise which would swamp the  signal without control.  I've edited the paragraph to not need this comment,  but I'm not sure if we should say more.}

% Sometimes twist folds will produce extraneous wires, or \textit{noise} wires
% that are not needed for our construction of Rule 110. Noise wires could be
% simply ignored by judicious use of intersector gadgets, but it makes for
% cleaner crease patterns if we can eliminate noise wires with \textit{eater
% gadgets} that accept any combination of Boolean wire values. A modified
% triangle twist, shown in Figure~\ref{fig:eater} does the job nicely.

Sometimes twist folds will produce extraneous wires, or \textit{noise} wires
that are not needed for our construction of Rule 110. In order to eliminate
these we have \textit{eater gadgets} that accept any combination of Boolean wire
values. In particular, if it can be arranged that all of the noise wires in
adjacent cells either cancel one another out (by colliding in an eater) or
match up, then the cells can be tessellated.  A modified triangle twist, shown in
Figure~\ref{fig:eater} does the job nicely.

\begin{figure}[h] 
\begin{tikzpicture}
% first draw the bidirectional twist
\draw[mountain] (30:1) -- (30:3.5)
                (150:1) -- (150:3.5)
                (0,-1) -- (0,-3.5)
                (30:1) -- (150:1) -- (0,-1) -- cycle;
\draw[valley_opt] (30:1) -- +(0,-4)
                  (30:1) -- +(150:4)
                  (150:1) -- +(30:4)
                  (150:1) -- +(0,-4)
                  (0,-1) -- +(30:4)
                  (0,-1) -- +(150:4);
% then add control folds
\draw[valley_opt] (0,0) -- (30:1)
                  (0,0) -- (150:1)
                  (0,0) -- (0,-1);
\draw[valley_opt] (0,0) -- (0,0.5)
                  (0,0) -- (-150:0.5)
                  (0,0) -- (-30:0.5);

\draw[mountain_opt]   (0,0.5) -- (0,1)
                    (-150:0.5) -- (-150:1)
                    (-30:0.5) -- (-30:1);
%\draw[opt] (150:1) -- (0, 1) -- (30:1) -- (-30:1) -- (0,-1) -- (-150:1) -- cycle;

%input/output labels
% \path (0,0.5) +(150:3) node[rotate=-30] (Fin1) {FALSE};
% \draw[->] (Fin1) -- +(-30:1);
% \path (0,-0.5) +(150:3.5) node[rotate=-30] (Tin1) {TRUE};
% \draw[->] (Tin1) -- +(-30:1);
% \path (0,-0.5) +(30:3.5) node[rotate=30] (Fin2) {FALSE};
% \draw[->] (Fin2) -- +(-150:1);
% \path (0,0.5) +(30:3) node[rotate=30] (Fin2) {TRUE};
% \draw[->] (Fin2) -- +(-150:1);
% \path (-30:0.5) -- +(0,-3) node[rotate=-90] (Fout) {FALSE};
% \path (-150:0.5) -- +(0,-3) node[rotate=-90] (Tout) {TRUE};
% \draw[->] (Fout) -- +(0,-1);
% \draw[->] (Tout) -- +(0,-1);

\end{tikzpicture}
\centering
\caption{Eater}
\label{fig:eater}
\end{figure}

\begin{proposition} \label{prop:eater}
  The values of the three wires entering the eater (Figure~\ref{fig:eater}) are
  independent.  In other words, the eater will flat-fold regardless of the
  values of the three wires.
\end{proposition}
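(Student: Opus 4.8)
The plan is to argue in the style used for the other gadgets in this section: first observe that whenever the eater flat-folds its three input values are well-defined, and then note that, because the eater is meant to impose no relation among those values, there is nothing to \emph{forbid} — it remains only to check that each of the eight input combinations does admit a flat folding, which can be verified directly by folding.

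For the first point, the eater has three boundary wires with consecutive angles $\theta_1=\theta_2=\theta_3=2\pi/3$. No nonempty proper subset of $\{\theta_1,\theta_2,\theta_3\}$ sums to a multiple of $\pi$: a single angle equals $2\pi/3\not\equiv 0\pmod{\pi}$, and a pair sums to $4\pi/3\equiv\pi/3\not\equiv 0\pmod{\pi}$. Hence the hypothesis of Proposition~\ref{prop:onevalley} is met, and in any flat folding of the eater each wire has exactly one active valley crease, so each of the three Boolean values is well-defined.

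For the second point, I would first cut down the casework using the symmetry of the gadget. The eater is invariant under the dihedral group of order $6$ generated by the $2\pi/3$ rotation (which cyclically permutes the three wires, the three corners of the central triangle, and the spokes emanating from the central vertex) and by the reflection fixing the bottom wire (which exchanges the two upper wires and reverses handedness on the bottom one). Under this action the $2^3=8$ input combinations fall into a small number of orbits, so only a few representatives need treatment. For each representative one fixes the active valley of each wire according to the input and then propagates these choices through the interior vertices — the corners of the central triangle, the midpoints of its edges, the central vertex, and the outer vertices where wire valleys meet the spoke continuations — choosing the remaining optional creases so that Kawasaki's Theorem (Theorem~\ref{thm:Kawasaki}) and Maekawa's Theorem hold everywhere. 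That such a choice exists, and can be completed to a consistent layer ordering, is the part verified by folding; the extra interior creases of the modified triangle twist are precisely what supply the freedom to realize \emph{any} combination of the three values, in contrast to the rigid pure triangle twist of Proposition~\ref{prop:twists}, whose three wires are forced to agree.

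The main obstacle, should one want a fully rigorous (non-folding) argument, is the layer ordering: as Lemma~\ref{lem:problem} illustrates, a crease pattern can satisfy Kawasaki's and Maekawa's Theorems at every vertex and still fail to flat-fold because of the taco-taco or taco-tortilla conditions of Section~\ref{ssec:back}. Making the proof airtight would therefore mean exhibiting, for each of the few input cases, an explicit linear order on the layers of the folded image and checking the six layer-ordering axioms against it (or, alternatively, decomposing the eater into sub-regions each already known to flat-fold and verifying that their orderings are compatible); I expect the bookkeeping here, rather than any conceptual difficulty, to be the bulk of the work.
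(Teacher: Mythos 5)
Your proposal is correct and follows essentially the same route as the paper: reduce the eight input combinations by the dihedral symmetry of the gadget to two representative cases (all TRUE, which degenerates to the triangle twist of Proposition~\ref{prop:twists}, and two-TRUE/one-FALSE, which uses the short optional mountain crease), then verify flat-foldability of each directly by folding, as the paper's stated convention permits. Your additional invocation of Proposition~\ref{prop:onevalley} for well-definedness of the wire values and your remark about layer-ordering rigor are consistent with, though not part of, the paper's own (brief) proof.
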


\begin{proof}
%   Can just fold and try it!  It'll always fold flat.
Since the eater crease pattern is rotationally-symmetric as well as reflectively-symmetric about each mountain input axis, all one needs to do to show that the eater will fold flat for any set of inputs is to check when the inputs are all TRUE or have two TRUE and one FALSE inputs.  When all are TRUE the eater turns into a triangle twist and thus can fold flat.  The TRUE, TRUE, FALSE case requires using the short optional mountain crease (and its collinear optional valley) that is between the adjacent optional valleys between one of the TRUE and the FALSE  inputs. It can be readily checked that this, too, is flat-foldable.
\end{proof}

\section{Folding Rule 110}\label{sec:110}

\def\gridsize{60}
\def\topEnd{26}
\def\bottomEnd{-36}

\subsection{The Main Theorem}

Figure~\ref{fig:ori110} shows the schematic of a crease pattern that logically simulates Rule 110. This crease pattern is overlaid on the triangle lattice for clarity; the underlying triangle lattice is not part of the crease pattern. The vertical wires labeled A, B, and C along the top of the Figure are the inputs and the center-most vertical wire at the bottom, labeled OUT, is the output.  The wires and gadgets drawn in color in the Figure control the logical workings of this crease pattern.  The wires and gadgets drawn in grey absorb and direct the noise wires generated by the crease pattern. Also note that the numerous gadgets detailed in the previous Section are simply labeled in Figure~\ref{fig:ori110} by their names, like AND, OR, E (for the eater gadget), and so on rather than drawing all the individual creases. The pale yellow hexagons and triangles are hexagon and triangle twists, respectively.

\begin{theorem}\label{thm:rule110}
If the crease pattern in Figure~\ref{fig:ori110} is given mandatory creases for the input wires A, B, and C, then optional creases can be chosen from the rest to make the crease pattern fold flat, and the result will force truth value of the output wire to follow Rule 110 from the inputs A, B, and C.
\end{theorem}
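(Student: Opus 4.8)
\textit{Proof proposal.} The plan is to read Figure~\ref{fig:ori110} as a Boolean circuit and reduce the theorem to the gadget‑correctness results of Section~\ref{sec:gates}, together with Proposition~\ref{prop:onevalley} and Corollary~\ref{cor:onevalley_int}. First I would record Rule 110 as a formula over the available gates: from the table in Figure~\ref{fig:rule110}, the new value of a cell with left, center, and right inputs $p,q,r$ equals $0$ exactly on the patterns $111$, $100$, $000$, and one checks directly that this function is $(q\vee r)\wedge\neg(p\wedge q\wedge r)$. Since NAND and OR and AND gadgets are available and the twist gadgets negate‑and‑duplicate their inputs (Proposition~\ref{prop:twists}) while intersectors move and cross wires without changing their values (Section~\ref{sec:intersect}), this formula is realizable. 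I would then pin down, gadget by gadget and wire by wire in Figure~\ref{fig:ori110}, exactly which signal each colored wire carries and which sub‑expression each labeled gadget computes, so that the colored sub‑pattern is literally a circuit evaluating $(q\vee r)\wedge\neg(p\wedge q\wedge r)$ at the OUT wire, with the grey sub‑pattern carrying only the noise wires produced by the twists.

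Next I would prove the ``forcing'' half: in \emph{any} flat‑folding of the pattern in which the input creases of $A,B,C$ are mandatory, the OUT wire carries $\mathrm{Rule\ 110}(A,B,C)$. By hypothesis the three input wires have well‑defined Boolean values. I would then induct along the directed acyclic graph whose vertices are the gadgets of Figure~\ref{fig:ori110} and whose edges are wires: whenever a gadget's input wires have well‑defined values, Proposition~\ref{prop:onevalley} (applied to single‑output gates) and Corollary~\ref{cor:onevalley_int} (applied to the two‑input/two‑output intersectors), combined with the relevant ``works'' proposition (Propositions~\ref{prop:nor}, \ref{prop:nand}, \ref{prop:new-or}, \ref{prop:new-and}, \ref{prop:twists}, and the intersector proposition of Section~\ref{sec:intersect}), force the gadget's output wires to have well‑defined values equal to the prescribed logical function of the inputs. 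Propagating this through the circuit forces the value at OUT; since this holds for every flat‑folding, no flat‑folding can produce a wrong output, which is exactly the forcing statement. The grey noise wires are dealt with separately: each one is routed into an eater, and by Proposition~\ref{prop:eater} an eater imposes no constraint on its incoming wires, so the noise sub‑pattern neither obstructs flat‑foldability nor affects OUT.

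Finally I would prove existence of a flat‑folding for each of the eight input assignments. Given an assignment to $A,B,C$, evaluate the circuit to determine the Boolean value of every wire; in each logic/twist/intersector gadget choose the optional creases prescribed by that gadget's own flat‑folding realizing these input and output values (existence of which is taken as directly verifiable, per the convention stated at the end of Section~\ref{sec:Rule110}), and in each eater use the flat‑folding supplied by Proposition~\ref{prop:eater}. These local choices agree on each shared wire, since adjacent gadgets agree on that wire's Boolean value and the set of active creases in a wire is determined by its value; gluing the local layer orderings then produces the desired global flat‑folding.

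The main obstacle I expect is the bookkeeping in the second and third steps rather than any new folding‑theoretic input: one must verify that the wiring in Figure~\ref{fig:ori110} delivers precisely the right signals to the right ports of each gate — in particular tracking the parity of the negations introduced by each twist — so that the composite three‑input function is Rule 110 and not some other function, and one must check that every noise wire indeed terminates in an eater. The flat‑foldability of the fully assembled pattern itself is not established here but invoked as directly verifiable, consistent with the paper's convention that proofs only rule out unwanted foldings.
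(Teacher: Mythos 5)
Your proposal is correct and follows essentially the same route as the paper: read Figure~\ref{fig:ori110} as a Boolean circuit, invoke the gadget-correctness propositions of Section~\ref{sec:gates} (together with Proposition~\ref{prop:onevalley} and Corollary~\ref{cor:onevalley_int}) to force each wire's value, route the noise wires into eaters via Proposition~\ref{prop:eater}, and verify that the composite three-input function at OUT is Rule 110. The only cosmetic difference is the target formula: the figure as wired computes $\neg\bigl((A\vee\neg B)\wedge\neg((\neg B\wedge C)\vee(B\wedge\neg C))\bigr)$, which the paper simplifies to $(\neg A\wedge B)\vee(\neg B\wedge C)\vee(B\wedge\neg C)$ rather than your literal $(q\vee r)\wedge\neg(p\wedge q\wedge r)$, but the two expressions are logically equivalent and both realize Rule 110, so your bookkeeping step would go through unchanged.
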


\begin{proof}
    In the crease pattern, the splitter and crossover gadgets lead the three
  original inputs, or their negatins into a NOR and a pair of NAND clauses and then pass them into triangle twists to produce the following three signals (labeled in Figure~\ref{fig:ori110}):
  \[X =A\vee \neg B \qquad Y =\neg B \wedge C \qquad Z =B\wedge\neg C.\]
  The outputs of these are then led into a NAND and an OR clause to produce the two signals
  \[P = \mathrm{TRUE}\wedge X, Q =  \neg(Y \vee Z).\]

  Finally, the values of $P$ and $Q$ are led to a NAND gadget, so that
  the final output signal is
  \[\neg (P\wedge Q).\] In other words, the output of this crease pattern is:
  \[\neg((A\vee \neg B) \wedge \neg((\neg B\wedge C)\vee (B\wedge \neg C) )).\]
  This simplifies to 
  \begin{equation} \label{eq:1.1}
    (\neg A \wedge B) \vee ((\neg B \wedge C ) \vee (B \wedge \neg C)).
  \end{equation}
  This is exactly what Rule 110 performs. That is, the output is TRUE if
  $B \neq C$ (which makes the second clause in (\ref{eq:1.1}) TRUE), and if
  $B = C$ then this second clause will be FALSE and the output will be the value
  of $(\neg A \wedge B)$. That’s Rule 110.

\end{proof}

We remark that all of the gadgets used in the construction of the flat-folding simulation of Rule 110 generate a unique MV assignment (and thus a unique set of output wire values) for a given set of input wire values. Thus the same is true for our Rule 110 crease pattern (Figure~\ref{fig:ori110})  \textit{except} for the places where two eater gadgets share a wire. Such a shared wire between eater gadgets could be either TRUE or FALSE and not affect the logical constraints of the rest of the crease pattern. Nonetheless, we have demonstrated that \textit{some} flat-folding of the crease pattern will exist to perform Rule 110 computations, which is all that is required to simulate Rule 110.

\subsection{On finiteness, flat-foldable origami tessellations, and Turing
  machines} \label{sec:tess}

The goal of this paper is to prove that falt-folding is Turing-complete,
but what does it mean to make this statement?  A Turing machine is a
finitely-defined object (i.e. a machine with a finite number of states) working
with infinite storage space (the tape) on which is recorded a finite starting
input.  In order to translate this into origami, we form a crease pattern made of of finite-state cells that are tessellated onto the infinite plane.

Consider the following variation of the global flat-foldability problem: We are
given an infinite tessellation of a finite straight-line planar graph $G$ drawn
on our paper $P=\mathbb{R}^2$; the tessellation must have a finite fundamental
region, which is the cell in question. All the edges of $G$ are labeled as
either mandatory mountains, mandatory valleys, optional mountains, or optional
valleys.  $G$ is equipped with a subgraph $G_\mathrm{init}$, which contains all
of the mandatory creases and a subset of the optional creases, and which also
forms a tessellation (i.e. the chosen subgraph is the same in every cell), such
that $G_\mathrm{init}$ is the crease pattern $X_f$ of some isometric folding map
$f:\mathbb{R}^2\to \mathbb{R}^2$ with a global layer ordering $\lambda_f$ whose
MV assignment corresponds to the mountain and valley labeling of
$G_\mathrm{init}$.  This is the ``ground state'' setup of our flat-folding
Turing machine: the blank tape.

To specify a more general problem, we take a graph $G'$ which contains $G$ and
which differs from $G$ by only (a) the addition of finitely many creases, or (b)
the modification of an optional crease into a mandatory crease.  The question
becomes: is there a subset $H$ of $G'$, which which contains all of the
mandatory creases and a subset of the optional creases, such that $H$ is the
crease pattern $X_f$ of some isometric folding map
$f:\mathbb{R}^2\to \mathbb{R}^2$ with a global layer ordering $\lambda_f$ whose
MV assignment corresponds to the mountain and valley labeling of $H$. 

The main result of this paper is that this problem is Turing-complete.
Theorem~\ref{thm:rule110} shows that Rule 110, a finite cellular automaton which
is known to be Turing-complete \cite{Cook}, and in fact P-complete \cite[Theorem
1]{Woods}, can be modeled using a tessellating crease pattern satisfying the
above conditions. The above-mentioned crease pattern $H$ is the output of the
Turing machine from a given input.

The connections between Turing machine computations, Rule 110, and in our flat-foldable crease pattern might seem mysterious to the reader, especially the requirement that such computations be performed with a finite amount of material. Thus, we provide a few details on this in the remainder of this Section.

\begin{definition}
  Consider an elementary cellular automaton with input values encoded as a row
  of cells, colored black for $1$ and white for $0$.  The computation of the
  automaton is recorded as a grid with this input row as the top, and each
  consecutive row determined by the computational rule of the automaton.  This
  gives a coloring of the plane.  A \emph{spaceship} is a self-replicating tile:
  a finite sequence of cells that, if repeated infinitely, will produce the
  original pattern back in a finite number of computations.  
\end{definition}

Computation on Rule 110 is done by observing perturbations in a sequence of
standard spaceships, which are themselves a tessellation $14$ cells wide and $7$
cells tall.  (See \cite{Cook,Woods} for more details.)  Take a $14\times 7$ repetition
of the basic set of cells as the basis of the origami tessellation; since the spaceships
tessellate infinitely, this will fold flat. To compute with Rule 110 a finite
number of perturbations are made to the spaceship pattern; this is encoded in
the crease pattern by setting the direction of input wires to be TRUE or FALSE
by modifying the appropriate optional crease to be a mandatory crease, as
desired, and altering the NAND gate above each input wire to be an eater (by
adding a finite number of optional creases; the extra creases present in the
NAND will not affect the flat-foldability of the gadget as they are all
optional).  This gives the input value to the computation below it.  The part of
the pattern below the given inputs will fold flat with finitely many
modifications to the original spaceship pattern if and only if the original
input reverts to the standard spaceships after finitely many steps.  We conclude that since we need a constant amount of our paper to replicate a Rule 110 spaceship, and by Theorem~\ref{thm:rule110}, we have that our generalized flat-foldable origami
problem is Turing-complete, and in fact P-complete.

\section{Conclusion}\label{sec:concl}

We have shown that folding origami crease patterns with optional creases into a flat state can emulate the behavior of the one-dimensional cellular automaton Rule 110, and can therefore perform the tasks of a universal Turing machine. Actually folding a piece of paper to simulate, say, multiple rows of an instance of Rule 110 using the crease pattern presented here would be a gargantuan task, even for expert origami artists.  Using these methods to perform the computations of a Turing machine using flat origami would be even more arduous, so this is by no means meant to be a practical way to perform computation via origami. 

By way of comparison, we note the existence of prior work from the physics and engineering community on using origami for actual computation, e.g. \cite{Menachem,mechano,printrobots}. These studies build logic gates using \textit{rigid origami}, where a stiff material is folded in a continuous motion so that the creases act like hinges and the regions of material between the creases remain planar, or rigid, during the folding process. Determining whether a crease pattern can be rigidly folded in this way has also been proven to be NP-hard \cite{rigidcom}. While it is likely that rigid origami is also Turing complete as a computational device, to our knowledge no one has proven this. The crease patterns and gadgets in the present work are not rigidly foldable and therefore could not be used as-is in such a proof. Rather, computation performed by flat origami should be viewed discretely, where only the fully flat-folded state provides the desired computational information.

The logic gadgets presented in this paper may be used to simulate other
one-dimensional cellular automata.  For example, a crease pattern to produce
a Sierpinski triangle modulo $2$ would be given by iteration of the cell in
Figure~\ref{fig:sierpinski}; to make this tessellate it is necessary to reflect
consecutive units in each row, and the cells shift half a cell-width in each
row.  The basic cell, as above, is in a dark green box.  To see the Sierpinski
effect, one could color the ``true'' side of the input/output wires blue and the
``false'' red.

\begin{landscape}
  \begin{figure}[h]
    \centering

    % scaling should have x scaled by sqrt(3)/2 more than the y scaling
    \scalebox{0.25}{
    \begin{tikzpicture}[xscale=0.86602540378]
  % base grid
  \clip (-50,\topEnd) rectangle (50,\bottomEnd);

  \foreach \x in {-\gridsize,...,\gridsize} {
    \draw[gridline] (\x,\gridsize) -- (\x,-\gridsize);
  }
  \foreach \y in {\gridsize,...,-\gridsize} {
    \draw[gridline] (0,\y) -- +(\gridsize+2,{(\gridsize+2)/2});
    \draw[gridline] (0,\y) -- +(-\gridsize-2,{(-\gridsize-2)/2});
    \draw[gridline] (0,\y) -- +(\gridsize+2,{(-\gridsize-2)/2});
    \draw[gridline] (0,\y) -- +(-\gridsize-2,{(\gridsize+2)/2});
  }

  %%%%%%% WIRES

  % junk wires

  \jwire[jA]{(-48,16)}{(-72,28)}
  \jwire[jA]{(-48,16)}{(-24,28)}
  \jwire[jB]{(0,16)}{(-24,28)}
  \jwire[jB]{(0,16)}{(24,28)}
  \jwire[jC]{(48,16)}{(24,28)}
  \jwire[jC]{(48,16)}{(72,28)}

  \jwire[jA]{(-48,0)}{(-56,4)}
  \jwire[jA]{(-48,0)}{(-40,4)}
  \jwire[jB]{(-32,0)}{(-40,4)}
  \jwire[jB]{(-32,0)}{(-32,8)}
  \jwire[jB]{(-31,7.5)}{(-31,24.5)}
  \jwire[jB]{(-32,24)}{(-32,\topEnd)}
  \jwire{(-40,4)}{(-40,12)}
  \jwire{(-39,11.5)}{(-39,20.5)} 
  \jwire{(-40,20)}{(-40,\topEnd)}
  \jwire[jB]{(-32,0)}{(-23,-4.5)}
  \jwire[jB]{(-32,-4)}{(-25,-7.5)}
  \jwire[jinfo]{(-44,-11)}{(-48,-13)}
  \jwire[jinfo]{(-52,-11)}{(-48,-13)}
  \jwire{(-48,-13)}{(-48,-15)}
  \jwire[jinfo]{(-40,-15)}{(-32,-19)}
  \jwire{(-48,-15)}{(-44,-17)}
  \jwire{(-48,-15)}{(-52,-17)}
  \jwire{(-45,-17.5)}{(-40,-20)}
  \jwire[info]{(-36,-11)}{(-32,-13)}

  \jwire[jC]{(48,0)}{(56,4)}
  \jwire[jC]{(48,0)}{(40,4)}
  \jwire[jC]{(16,-1)}{(23,-4.5)}
  \jwire[jC]{(16,-5)}{(21,-7.5)}
  \jwire[jB]{(32,0)}{(40,4)}
  \jwire[jB]{(32,0)}{(32,8)}
  \jwire[jB]{(31,7.5)}{(31,24.5)}
  \jwire[jB]{(32,24)}{(32,\topEnd)}
  \jwire{(40,4)}{(40,12)}
  \jwire{(39,11.5)}{(39,20.5)} 
  \jwire{(40,20)}{(40,\topEnd)}
  \jwire[jB]{(32,0)}{(23,-4.5)}
  \jwire[jB]{(32,-4)}{(25,-7.5)}
  \jwire[jinfo]{(44,-11)}{(48,-13)}
  \jwire[jinfo]{(52,-11)}{(48,-13)}
  \jwire{(48,-13)}{(48,-15)}
  \jwire[jinfo]{(40,-15)}{(32,-19)}
  \jwire[jinfo]{(12,-10)}{(16,-12)}
  \jwire[jinfo]{(8,-14)}{(16,-18)}
  \jwire{(48,-15)}{(44,-17)}
  \jwire{(48,-15)}{(52,-17)}
  \jwire{(45,-17.5)}{(40,-20)}
  \jwire{(36,-11)}{(32,-13)}

  \jwire[jA]{(-16,-1)}{(-16,8)}
  \jwire[jA]{(-17,7.5)}{(-17,24.5)}
  \jwire[jA]{(-16,24)}{(-16,\topEnd)}
  \jwire[jA]{(-16,-1)}{(-8,3)}
  \jwire[jB]{(0,-1)}{(-8,3)}
  \jwire{(-8,3)}{(-8,12)}
  \jwire{(-9,11.5)}{(-9,20.5)}
  \jwire{(-8,20)}{(-8,\topEnd)}

  \jwire[jC]{(16,-1)}{(16,8)}
  \jwire[jC]{(17,7.5)}{(17,24.5)}
  \jwire[jC]{(16,24)}{(16,\topEnd)}
  \jwire[jC]{(16,-1)}{(8,3)}
  \jwire[jB]{(0,-1)}{(8,3)}
  \jwire{(8,3)}{(8,12)}
  \jwire{(9,11.5)}{(9,20.5)}
  \jwire{(8,20)}{(8,\topEnd)}

  \jwire[jinfo]{(-4,-10)}{(0,-12)}
  \jwire[jinfo]{(4,-10)}{(0,-12)}
  \jwire{(0,-12)}{(0,-14)}
  \jwire{(0,-14)}{(-4,-16)}
  \jwire{(-3,-16.5)}{(-8,-19)}
  \jwire{(0,-14)}{(4,-16)}
  \jwire{(3,-16.5)}{(8,-19)}

  \jwire[jA]{(-16,-1)}{(-23,-4.5)}
  \jwire[jA]{(-16,-5)}{(-21,-7.5)}
  \jwire{(-12,-10)}{(-16,-12)}
  \jwire[jinfo]{(-8,-14)}{(-16,-18)}

  \jwire[jOUT]{(-30,-18)}{(0,-33)}
  \jwire[jOUT]{(-48,-33)}{(-24,-21)}
  \jwire[jOUT]{(-48,-33)}{(-72,-21)}
  \jwire[jOUT]{(-25,-20.5)}{(-18,-17)}
  \jwire[jOUT]{(30,-18)}{(0,-33)}
  \jwire[jOUT]{(48,-33)}{(24,-21)}
  \jwire[jOUT]{(48,-33)}{(72,-21)}
  \jwire[jOUT]{(25,-20.5)}{(18,-17)}

  \jwire{(-23,-4.5)}{(-23,-6.5)}
  \jwire{(-23,-6.5)}{(-25,-7.5)}
  \jwire{(-23,-6.5)}{(-21,-7.5)}
  \jwire{(-25,-7.5)}{(-25,-9.5)}
  \jwire{(-21,-7.5)}{(-21,-9.5)}
  \jwire{(-21,-9.5)}{(-23,-10.5)}
  \jwire{(-25,-9.5)}{(-23,-10.5)}
  \jwire{(-25,-9.5)}{(-32,-13)}
  \jwire{(-21,-9.5)}{(-16,-12)}
  \jwire{(-23,-10.5)}{(-23,-12.5)}
  \jwire{(-23,-12.5)}{(-30,-16)}
  \jwire{(-23,-12.5)}{(-18,-15)}
  \jwire{(-32,-13)}{(-32,-15)}
  \jwire{(-32,-15)}{(-30,-16)}
  \jwire{(-30,-16)}{(-30,-18)}
  \jwire{(-30,-18)}{(-32,-19)}
  \jwire{(-32,-15)}{(-36,-17)}
  \jwire{(-35,-17.5)}{(-40,-20)}
  \jwire{(-16,-12)}{(-16,-14)}
  \jwire{(-16,-14)}{(-18,-15)}
  \jwire{(-18,-15)}{(-18,-17)}
  \jwire{(-18,-17)}{(-16,-18)}
  \jwire{(-16,-14)}{(-12,-16)}
  \jwire{(-13,-16.5)}{(-8,-19)}
  \jwire{(-40,-20)}{(-40,-29)}
  \jwire{(-39,-28.5)}{(-39,\bottomEnd)}
  \jwire{(-32,-19)}{(-32,-25)}
  \jwire{(-31,-24.5)}{(-31,\bottomEnd)}
  \jwire{(-16,-18)}{(-16,-25)}
  \jwire{(-17,-24.5)}{(-17,\bottomEnd)}
  \jwire{(-8,-19)}{(-8,-29)}
  \jwire{(-9,-28.5)}{(-9,\bottomEnd)}

  \jwire{(23,-4.5)}{(23,-6.5)}
  \jwire{(23,-6.5)}{(25,-7.5)}
  \jwire{(23,-6.5)}{(21,-7.5)}
  \jwire{(25,-7.5)}{(25,-9.5)}
  \jwire{(21,-7.5)}{(21,-9.5)}
  \jwire{(21,-9.5)}{(23,-10.5)}
  \jwire{(25,-9.5)}{(23,-10.5)}
  \jwire{(25,-9.5)}{(32,-13)}
  \jwire{(21,-9.5)}{(16,-12)}
  \jwire{(23,-10.5)}{(23,-12.5)}
  \jwire{(23,-12.5)}{(30,-16)}
  \jwire{(23,-12.5)}{(18,-15)}
  \jwire{(32,-13)}{(32,-15)}
  \jwire{(32,-15)}{(30,-16)}
  \jwire{(30,-16)}{(30,-18)}
  \jwire{(30,-18)}{(32,-19)}
  \jwire{(32,-15)}{(36,-17)}
  \jwire{(35,-17.5)}{(40,-20)}
  \jwire{(16,-12)}{(16,-14)}
  \jwire{(16,-14)}{(18,-15)}
  \jwire{(18,-15)}{(18,-17)}
  \jwire{(18,-17)}{(16,-18)}
  \jwire{(16,-14)}{(12,-16)}
  \jwire{(13,-16.5)}{(8,-19)}
  \jwire{(40,-20)}{(40,-29)}
  \jwire{(39,-28.5)}{(39,\bottomEnd)}
  \jwire{(32,-19)}{(32,-25)}
  \jwire{(31,-24.5)}{(31,\bottomEnd)}
  \jwire{(16,-18)}{(16,-25)}
  \jwire{(17,-24.5)}{(17,\bottomEnd)}
  \jwire{(8,-19)}{(8,-29)}
  \jwire{(9,-28.5)}{(9,\bottomEnd)}

  % information transfer
  
  \wire[notA]{(-48,16)}{(-24,4)}
  \wire[notA]{(-48,16)}{(-72,4)}
  \wire[notA]{(-48,16)}{(-48,0)}
  \wire[A]{(-48,\topEnd)}{(-48,16)}
  \wire[notB]{(0,16)}{(-32,0)}
  \wire[notB]{(0,16)}{(32,0)}
  \wire[notB]{(0,16)}{(0,-1)}
  \wire[B]{(0,\topEnd)}{(0,16)}
  \wire[notC]{(48,16)}{(24,4)}
  \wire[notC]{(48,16)}{(72,4)}
  \wire[notC]{(48,16)}{(48,0)}
  \wire[C]{(48,\topEnd)}{(48,16)}

  \wire[notA]{(-25,3.5)}{(-16,-1)}
  \wire[notC]{(25,3.5)}{(16,-1)}

  \wire[A]{(-48,0)}{(-60,-6)}
  \wire[A]{(-48,0)}{(-48,-5)}
  \wire[A]{(-48,0)}{(-36,-6)}
  \wire[B]{(-32,0)}{(-40,-4)}
  \wire[B]{(-39,-4.5)}{(-44,-7)}
  \wire[B]{(-32,0)}{(-32,-4)}
  \wire[notB]{(-32,-4)}{(-36,-6)}
  \wire[notA]{(-48,-5)}{(-44,-7)}
  \wire[notA]{(-48,-5)}{(-52,-7)}
  \wire[info]{(-44,-7)}{(-44,-11)}
  \wire[info]{(-36,-6)}{(-36,-11)}
  \wire[info]{(-44,-11)}{(-40,-13)}
  \wire[info]{(-36,-11)}{(-40,-13)}
  \wire[info]{(-40,-13)}{(-40,-15)}
  \wire[info]{(-40,-15)}{(-48,-19)}
  \wire[info]{(-58,-15)}{(-48,-19)}
  \wire[OUT]{(-48,-19)}{(-48,-33)}
  \wire[notOUT]{(-48,-33)}{(-48,\bottomEnd)}
  \wire[notOUT]{(0,-33)}{(0,\bottomEnd)}
  \wire[notOUT]{(48,-33)}{(48,\bottomEnd)}
  \wire[notOUT]{(-48,-33)}{(-72,-45)}
  \wire[notOUT]{(-48,-33)}{(-24,-45)}
  \wire[notOUT]{(48,-33)}{(72,-45)}
  \wire[notOUT]{(48,-33)}{(24,-45)}
  \wire[notOUT]{(0,-33)}{(-24,-45)}
  \wire[notOUT]{(0,-33)}{(24,-45)}

  \wire[C]{(48,0)}{(60,-6)}
  \wire[C]{(48,0)}{(48,-5)}
  \wire[C]{(48,0)}{(36,-6)}
  \wire[B]{(32,0)}{(40,-4)}
  \wire[B]{(39,-4.5)}{(44,-7)}
  \wire[B]{(32,0)}{(32,-4)}
  \wire[notB]{(32,-4)}{(36,-6)}
  \wire[notC]{(48,-5)}{(44,-7)}
  \wire[notC]{(48,-5)}{(52,-7)}
  \wire[info]{(44,-7)}{(44,-11)}
  \jwire[junk]{(36,-6)}{(36,-11)}  % used to be actual wire, with info
  \wire[info]{(44,-11)}{(40,-13)}
  % \wire[info]{(36,-11)}{(40,-13)} % set to true
  \draw[mountain] (36,-11) -- (40,-13);
  \draw[valley] (36,-10) -- (40,-12);
  \wire[info]{(40,-13)}{(40,-15)}
  \wire[info]{(40,-15)}{(48,-19)}
  \wire[info]{(58,-15)}{(48,-19)}
  \wire[OUT]{(48,-19)}{(48,-33)}

  \wire[B]{(0,-1)}{(-0,-6)}

  \wire[B]{(0,-1)}{(-12,-7)}
  \wire[A]{(-16,-1)}{(-16,-5)}
  \wire[A]{(-16,-1)}{(-8,-5)}
  \wire[A]{(-9,-5.5)}{(-4,-8)}
  \wire[notB]{(0,-6)}{(-4,-8)}
  \wire[notA]{(-16,-5)}{(-12,-7)}
  \jwire[junk]{(-12,-7)}{(-12,-10)}  %%% used to be actual wire, with info
  \wire[info]{(-4,-8)}{(-4,-10)}
  \wire[info]{(-4,-10)}{(-8,-12)}
  % \wire[info]{(-12,-10)}{(-8,-12)} %%% set to true
  \draw[mountain] (-12,-10) -- (-8,-12);
  \draw[valley] (-12,-9) -- (-8,-11);
  \wire[info]{(-8,-12)}{(-8,-14)}
  \wire[info]{(-8,-14)}{(0,-18)}

  \wire[B]{(0,-1)}{(12,-7)}
  \wire[C]{(16,-1)}{(16,-5)}
  \wire[C]{(16,-1)}{(8,-5)}
  \wire[C]{(9,-5.5)}{(4,-8)}
  \wire[notB]{(0,-6)}{(4,-8)}
  \wire[notC]{(16,-5)}{(12,-7)}
  \wire[info]{(12,-7)}{(12,-10)}
  \wire[info]{(4,-8)}{(4,-10)}
  \wire[info]{(4,-10)}{(8,-12)}
  \wire[info]{(12,-10)}{(8,-12)}
  \wire[info]{(8,-12)}{(8,-14)}
  \wire[info]{(8,-14)}{(0,-18)}

  \wire[OUT]{(0,-18)}{(0,-33)}

  %%%%%% GATES

  % informational gates
  
  \hextwist{(48,16)}
  \hextwist{(0,16)}
  \hextwist{(-48,16)}

  \intersect{(24,4)}{(25,3.5)}
  \intersect{(-24,4)}{(-25,3.5)}

  \hextwist{(-48,0)}
  \hextwist{(-32,0)}
  \hextwist{(-16,-1)}
  \hextwist{(0,-1)}
  \hextwist{(16,-1)}
  \hextwist{(32,0)}
  \hextwist{(48,0)}

  \intersect{(-40,-4)}{(-39,-4.5)}
  \triUp{(-32,-4)}
  \nand{(-36,-6)}
  \nand{(-44,-7)}
  \triUp{(-48,-5)}
  \triUp{(-44,-11)}
  \triUp{(-36,-11)}
  \orgate{(-40,-13)}
  \triUp{(-40,-15)}

  \intersect{(40,-4)}{(39,-4.5)}
  \triUp{(32,-4)}
  \eaterDown{(36,-6)}
  \nor{(44,-7)}
  \triUp{(48,-5)}
  \triUp{(44,-11)}
  \eaterUp{(36,-11)}
  \nand{(40,-13)}
  \triUp{(40,-15)}

  \triUp{(0,-6)}
  \intersect{(-8,-5)}{(-9,-5.5)}
  \nor{(-4,-8)}
  \triUp{(-16,-5)}
  \eaterDown{(-12,-7)}
  \triUp{(-4,-10)}
  \eaterUp{(-12,-10)}
  \nand{(-8,-12)}
  \triUp{(-8,-14)}
  \nand{(0,-18)}
  \intersect{(-3,-16.5)}{(-4,-16)}

  \intersect{(8,-5)}{(9,-5.5)}
  \nand{(4,-8)}
  \triUp{(16,-5)}
  \nand{(12,-7)}
  \triUp{(4,-10)}
  \triUp{(12,-10)}
  \orgate{(8,-12)}
  \triUp{(8,-14)}
  \intersect{(3,-16.5)}{(4,-16)}

  \hextwist{(-48,-33)}
  \hextwist{(0,-33)}
  \hextwist{(48,-33)}
  
  % purely junk gates and intersectors for junk

  \eaterUp{(-40,4)}
  \intersect{(-40,12)}{(-39,11.5)}
  \intersect{(-39,20.5)}{(-40,20)}
  \intersect{(-32,8)}{(-31,7.5)}
  \intersect{(-31,24.5)}{(-32,24)}
  \eaterDown{(-48,-13)}
  \nand{(-48,-19)}
  \eaterUp{(-48,-15)}
  \intersect{(-44,-17)}{(-45,-17.5)}

  \eaterUp{(40,4)}
  \intersect{(40,12)}{(39,11.5)}
  \intersect{(39,20.5)}{(40,20)}
  \intersect{(32,8)}{(31,7.5)}
  \intersect{(31,24.5)}{(32,24)}
  \eaterDown{(48,-13)}
  \nand{(48,-19)}
  \eaterUp{(48,-15)}
  \intersect{(44,-17)}{(45,-17.5)}

  \eaterUp{(-8,3)}
  \intersect{(-8,12)}{(-9,11.5)}
  \intersect{(-9,20.5)}{(-8,20)}
  \intersect{(-16,8)}{(-17,7.5)}
  \intersect{(-17,24.5)}{(-16,24)}

  \eaterUp{(8,3)}
  \intersect{(8,12)}{(9,11.5)}
  \intersect{(9,20.5)}{(8,20)}
  \intersect{(16,8)}{(17,7.5)}
  \intersect{(17,24.5)}{(16,24)}

  \eaterUp{(0,-12)}
  \eaterDown{(0,-14)}
  
  \eaterDown{(-23,-4.5)}
  \eaterUp{(-23,-6.5)}
  \eaterDown{(-21,-7.5)}
  \eaterDown{(-25,-7.5)}
  \eaterUp{(-21,-9.5)}
  \eaterUp{(-25,-9.5)}
  \eaterDown{(-23,-10.5)}
  \eaterUp{(-23,-12.5)}
  \eaterDown{(-40,-20)}
  \eaterDown{(-32,-19)}
  \eaterUp{(-30,-18)}
  \eaterDown{(-30,-16)}
  \eaterUp{(-32,-15)}
  \eaterDown{(-32,-13)}
  \eaterDown{(-16,-12)}
  \eaterUp{(-16,-14)}
  \eaterDown{(-18,-15)}
  \eaterUp{(-18,-17)}
  \eaterDown{(-16,-18)}
  \eaterDown{(-8,-19)}
  \intersect{(-35,-17.5)}{(-36,-17)}
  \intersect{(-13,-16.5)}{(-12,-16)}
  \intersect{(-24,-21)}{(-25,-20.5)}

  \eaterDown{(23,-4.5)}
  \eaterUp{(23,-6.5)}
  \eaterDown{(21,-7.5)}
  \eaterDown{(25,-7.5)}
  \eaterUp{(21,-9.5)}
  \eaterUp{(25,-9.5)}
  \eaterDown{(23,-10.5)}
  \eaterUp{(23,-12.5)}
  \eaterDown{(40,-20)}
  \eaterDown{(32,-19)}
  \eaterUp{(30,-18)}
  \eaterDown{(30,-16)}
  \eaterUp{(32,-15)}
  \eaterDown{(32,-13)}
  \eaterDown{(16,-12)}
  \eaterUp{(16,-14)}
  \eaterDown{(18,-15)}
  \eaterUp{(18,-17)}
  \eaterDown{(16,-18)}
  \eaterDown{(8,-19)}
  \intersect{(35,-17.5)}{(36,-17)}
  \intersect{(13,-16.5)}{(12,-16)}
  \intersect{(24,-21)}{(25,-20.5)}

  \intersect{(-40,-29)}{(-39,-28.5)}
  \intersect{(-32,-25)}{(-31,-24.5)}
  \intersect{(-16,-25)}{(-17,-24.5)}
  \intersect{(-8,-29)}{(-9,-28.5)}
  \intersect{(40,-29)}{(39,-28.5)}
  \intersect{(32,-25)}{(31,-24.5)}
  \intersect{(16,-25)}{(17,-24.5)}
  \intersect{(8,-29)}{(9,-28.5)}

  %%%%%%%%%%%%%% ANNOTATIONS

  \draw[line width=0.5mm, color=green!60!black, dashed] (-48,-33) rectangle (48,16);

  \node[annotation, draw, fill=white] at (-10,-11) {TRUE};
  \node[annotation, draw, fill=white] at (38,-12) {TRUE};
  \node[annotation, draw, fill=white, circle] at (-48,20) {A};
  \node[annotation, draw, fill=white, circle] at (0,20) {B};
  \node[annotation, draw, fill=white, circle] at (48,20) {C};
  \node[annotation, draw, fill=white] at (0,-22) {OUT};
  \node[annotation, draw, fill=white, circle] at (-6,-11) {X};
  \node[annotation, draw, fill=white, circle] at (6,-11) {Y};
  \node[annotation, draw, fill=white, circle] at (10,-11) {Z};
  \node[annotation, draw, fill=white, circle] at (6,-15) {Q};
  \node[annotation, draw, fill=white, circle] at (-6,-15) {P};

    \end{tikzpicture}}

  \caption{Origami crease pattern with optional creases that simulates a Rule 110 cell.}
\label{fig:ori110}
  \end{figure}
\end{landscape}

\begin{figure}[h]
  \centering
  \scalebox{0.4}{
  \begin{tikzpicture}[xscale=0.86602540378]
  % base grid
  \clip (-3.5,2) rectangle (19.5,-22);

  \foreach \x in {-4,-3,...,\gridsize} {
    \draw[gridline] (\x,2) -- (\x,-\gridsize);
  }
  \foreach \y in {\gridsize,...,-\gridsize} {
    \draw[gridline] (-4,\y-1) -- +(\gridsize+2,{(\gridsize+2)/2});
    \draw[gridline] (-4,\y+1) -- +(\gridsize+2,{(-\gridsize-2)/2});
  }

  %%%%%%% WIRES

  % junk wires
  \jwire{(0,-5)}{(6,-2)}
  \jwire{(6,-2)}{(6,5)}
  \jwire{(10,-2)}{(10,5)}
  \jwire{(8,-3)}{(6,-2)}
  \jwire{(8,-3)}{(10,-2)}
  \jwire{(16,-5)}{(10,-2)}
  \jwire{(8,-3)}{(8,-5)}
  \jwire{(8,-5)}{(4,-7)}
  \jwire{(8,-5)}{(12,-7)}
  \jwire{(0,-5)}{(-4,-3)}
  \jwire{(16,-5)}{(20,-3)}
  \jwire{(0,-11)}{(0,-15)}
  \jwire{(16,-11)}{(16,-16)}
  \jwire{(0,-15)}{(2,-16)}
  \jwire{(0,-15)}{(-2,-16)}
  \jwire{(2,-16)}{(4,-15)}
  \jwire{(2,-16)}{(2,-24)}
  \jwire{(-2,-16)}{(-2,-24)}
  \jwire{(16,-16)}{(14,-17)}
  \jwire{(16,-16)}{(18,-17)}
  \jwire{(11,-15.5)}{(14,-17)}
  \jwire{(14,-17)}{(14,-24)}
  \jwire{(18,-17)}{(18,-24)}
  \jwire{(18,-17)}{(20,-16)}
  \jwire{(-2,-16)}{(-4,-15)}

  % info wires
  \wire[notA]{(0,-5)}{(0,-11)}
  \wire[notA]{(0,-5)}{(4,-7)}
  \wire[notA]{(0,-5)}{(-4,-7)}
  \wire[A]{(0,5)}{(0,-5)}
  \wire[notB]{(16,-5)}{(20,-7)}
  \wire[notB]{(16,-5)}{(12,-7)}
  \wire[notB]{(16,-5)}{(16,-11)}
  \wire[B]{(16,5)}{(16,-5)}
  \wire[A]{(4,-7)}{(4,-9)}
  \wire[B]{(12,-7)}{(12,-9)}
  \wire[notA]{(4,-9)}{(0,-11)}
  \wire[notA]{(4,-9)}{(8,-11)}
  \wire[notA]{(-4,-9)}{(0,-11)}
  \wire[notA]{(7,-11.5)}{(11,-13.5)}
  \wire[notB]{(12,-9)}{(16,-11)}
  \wire[notB]{(20,-9)}{(16,-11)}
  \wire[notB]{(12,-9)}{(4,-13)}
  \wire[A]{(0,-11)}{(4,-13)}
  \wire[A]{(0,-11)}{(-4,-13)}
  \wire[B]{(16,-11)}{(11,-13.5)}
  \wire[B]{(16,-11)}{(21,-13.5)}
  \wire[info]{(4,-13)}{(4,-15)}
  \wire[info]{(4,-15)}{(8,-17)}
  \wire[info]{(11,-13.5)}{(11,-15.5)}
  \wire[info]{(11,-15.5)}{(8,-17)}
  \wire[OUT]{(8,-17)}{(8,-24)}

  %%%%%%% GATES

  \hextwist{(0,-5)}
  \hextwist{(0,-11)}
  \hextwist{(16,-5)}
  \hextwist{(16,-11)}
  \triDown{(4,-7)}
  \triUp{(4,-9)}
  \triDown{(12,-7)}
  \triUp{(12,-9)}
  \eaterUp{(8,-5)}
  \eaterDown{(8,-3)}
  \eaterUp{(6,-2)}
  \eaterUp{(10,-2)}
  \intersect{(8,-11)}{(7,-11.5)}
  \nand{(4,-13)}
  \triUp{(4,-15)}
  \eaterDown{(2,-16)}
  \eaterUp{(0,-15)}
  \eaterDown{(-2,-16)}
  \nand{(11,-13.5)}
  \triUp{(11,-15.5)}
  \nor{(8,-17)}
  \eaterDown{(14,-17)}
  \eaterUp{(16,-16)}
  \eaterDown{(18,-17)}

  %%%%%%%%%%%%%% ANNOTATIONS
  %\draw[very thick, color=annotation] (2.5,-11.5) rectangle (5.5,-16.5);
  %\node[annotation, fill=white, draw] at (4,-16.5) {A and !B};
  %\draw[very thick, color=annotation] (9.5,-12) rectangle (12.5,-17);
  %\node[annotation, fill=white, draw] at (11,-17) {!A and B};

  \node[annotation, fill=white, align=center] at (8, -20) {A == B};
  \draw[line width=0.8mm, color=green!60!black, dashed] (0,-1) rectangle (16,-18);
  %\node[annotation, circle, fill=white] at (0,0) {A};
  %\node[annotation, circle, fill=white] at (16,0) {B};

  %\draw[very thick, color=annotation] (12.5,-14.5) rectangle (19.5,-18.5);
  %\node[annotation, fill=white, align=center, draw] at (16,-18.5) {0.5 offset \\ need to reflect \\
  %  consecutive units};

\end{tikzpicture}}
  \caption{A cell in a tessellation that would make a Sierpinski triangle.}
  \label{fig:sierpinski}
\end{figure}

\bibliographystyle{alpha}
\bibliography{HZ}

\end{document}